\newlength{\defbaselineskip}
\newcommand{\setlinespacing}[1]%
           {\setlength{\baselineskip}{#1 \defbaselineskip}}
\theoremstyle{plain}
\newtheorem{thm}{Theorem}[section]
\newtheorem{cor}[thm]{Corollary}
\newtheorem{lem}[thm]{Lemma}
\newtheorem{prop}[thm]{Proposition}
\theoremstyle{definition}
\newtheorem{defn}{Definition}[section]
\newtheorem{rmk}{Remark}[section]
\newcommand{\eps}{\varepsilon}
\DeclareMathOperator*{\esssup}{ess\,sup}
\newcommand{\cO}{\mathcal{O}}
\newcommand{\cL}{\mathcal{L}}
\newcommand{\cM}{\mathcal{M}}
\newcommand{\cB}{\mathcal{B}}
\newcommand{\cS}{\mathcal{S}}
\newcommand{\cV}{\mathcal{V}}
\newcommand{\bP}{\mathbb{P}}
\newcommand{\bR}{\mathbb{R}}
\newcommand{\bN}{\mathbb{N}}
\newcommand{\sF}{\mathscr{F}}
\newcommand{\sP}{\mathscr{P}}
\newcommand{\sU}{\mathscr{U}}
\newcommand{\sV}{\mathscr{V}}
\newcommand{\tbf}{\textbf}
\makeatletter\@addtoreset{equation}{section} \makeatother
\begin{document}

\title{ Maximum Principle for Quasi-linear Backward Stochastic Partial Differential Equations\footnotemark[1] }

\author{Jinniao Qiu \footnotemark[2] ~~~~ and ~~~~
Shanjian Tang\footnotemark[2]~\footnotemark[3]}

\footnotetext[1]{Supported by NSFC Grant \#10325101, by Basic Research Program of China (973 Program)  Grant \# 2007CB814904, by the Science
Foundation of the Ministry of Education of China Grant \#200900071110001, and by WCU (World Class University) Program through the Korea
Science and Engineering Foundation funded by the Ministry of Education, Science and Technology (R31-2009-000-20007).}

\footnotetext[2]{Department of Finance and Control Sciences, School of Mathematical Sciences, Fudan University, Shanghai 200433, China.
\textit{E-mail}: \texttt{071018032@fudan.edu.cn} (Jinniao Qiu), \texttt{sjtang@fudan.edu.cn} (Shanjian Tang).}

\footnotetext[3]{Graduate Department of Financial Engineering, Ajou University, San 5, Woncheon-dong, Yeongtong-gu, Suwon, 443-749, Korea.}

\maketitle

\begin{abstract}
 In this paper we are concerned with the maximum principle for quasi-linear backward stochastic partial differential equations (BSPDEs for short)
 of parabolic type. We first prove the existence and uniqueness of the weak solution to quasi-linear BSPDE with the null Dirichlet condition on
 the lateral boundary. Then using the  De Giorgi iteration scheme, we establish the maximum estimates
 and the global  maximum  principle for quasi-linear BSPDEs.
 To study the local regularity of weak solutions, we also prove a local maximum principle for the backward stochastic parabolic De Giorgi class.
\end{abstract}

AMS Subject Classification: 60H15; 35R60

Keywords: Stochastic partial differential equation,
Backward stochastic partial differential equation, De Giorgi iteration, Backward stochastic parabolic De Gigorgi class

\section{Introduction}
In this paper we investigate the following quasi-linear BSPDE:
\begin{equation}\label{1.1}
  \left\{\begin{array}{l}
  \begin{split}
  -du(t,x)=\,&\displaystyle \biggl[\partial_{x_j}\Bigl(a^{ij}(t,x)\partial_{x_i} u(t,x)
        +\sigma^{jr}(t,x) v^{r}(t,x)     \Bigr) +b^j(t,x)\partial_{x_j}u(t,x)\\
        &\displaystyle
         +c(t,x)u(t,x)+\varsigma^{r}(t,x)v^r(t,x)+g(t,x,u(t,x),\nabla u(t,x),v(t,x))\\
        &\displaystyle +\partial_{x_j}f^j(t,x,u(t,x),\nabla u(t,x),v(t,x))
                \biggr]\, dt\\ &\displaystyle
           -v^{r}(t,x)\, dW_{t}^{r}, \quad
                     (t,x)\in Q:=[0,T]\times \mathcal {O};\\
    u(T,x)=\, &G(x), \quad x\in\cO.
    \end{split}
  \end{array}\right.
\end{equation}
Here and in the following we use Einstein's summation convention, $T\in(0,\infty)$  is a fixed deterministic terminal time, $\cO\subset \bR^n$ is a
bounded domain with $\partial \cO \in C^1$, $\nabla=(\partial_{x_1},\cdots,\partial_{x_n})$ is the gradient operator and $(W_t)_{t\in [0,T]}$ is an
$m$-dimensional standard Brownian motion in the filtered probability space $(\Omega,\sF,(\sF_t)_{t\geq 0},P)$. A solution of BSPDE (\ref{1.1})
is a pair of random fields $(u,v)$ defined on $\Omega\times[0,T]\times\cO$ such that   (\ref{1.1}) holds in a weak sense (see Definition
\ref{definition of weak solution}).

The study of backward stochastic partial differential equations (BSPDEs)  can be dated back  about thirty years ago (see Bensoussan
\cite{Bensousan_83} and Pardoux \cite{Pardoux1979}). Such BSPDE arises in many applications of probability theory and stochastic processes,
for instance in the nonlinear filtering and stochastic control theory for processes with incomplete information, as an adjoint equation of the
Duncan-Mortensen-Zakai filtration equation (for instance, see \cite{Bensousan_83,Hu_Ma_Yong02,Hu_Peng_91,Tang_98,Zhou_92,Zhou_93}). In the
dynamic programming theory, some nonlinear BSPDEs as the so-called  backward stochastic Hamilton-Jacobi-Bellman equations, are also introduced
in the study of non-Markovian control problems (see Peng \cite{Peng_92} and Englezos and Karatzas \cite{EnglezosKaratzas09}).

The maximum principle is a powerful tool to study the regularity of solutions, and constitutes a beautiful chapter of the classical theory of
deterministic second-order elliptic and parabolic partial differential equations. Using the technique of Moser's iteration, Aronson and Serrin
proved the maximum principle and local bound of weak solutions  for deterministic quasi-linear parabolic equations (see~\cite[Theorems 1 and
2]{AronsonSerrin1967}), which are stated in the backward form as the following two theorems.
\begin{thm}
  Let $u$ be a weak solution of a quasi-linear parabolic equation
  \begin{equation}\label{1.2}
  \begin{array}{l}
  \begin{split}
    -\partial_{t}u=\partial_{x_i} \mathscr{A}_i(t,x,u,\nabla u)+\mathscr{B}(t,x,u,\nabla u)
    \end{split}
  \end{array}
 \end{equation}
  in the bounded cylinder $Q=(0,T)\times\cO\subset \bR^{1+n}$ such that $u\leq M$ on the parabolic boundary $\left((0,T]\times \cO\right)\cup\left( \{T\}\times \cO \right)$.
  Then almost everywhere in $Q$
  $$u\leq M +C \Xi(\mathscr{A},\mathscr{B}) $$
  where the constant $C$ depends only on $T,|\cO|$ and the structure terms of the equation, while $\Xi(\mathscr{A},\mathscr{B})$ is expressed in terms of some quantities related to the coefficients $\mathscr{A}$ and $\mathscr{B}$.
\end{thm}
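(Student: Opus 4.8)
The plan is to run a De Giorgi iteration on the super-level sets of $u$, in the same spirit as the scheme the paper later develops for the backward stochastic case; since equation (\ref{1.2}) is deterministic there is no martingale term and the argument is purely analytic, the backward time direction merely meaning that the terminal datum $u(T,\cdot)\le M$ plays the role that the initial datum plays in the classical forward theory. Fix a level $k\ge M$ and set $w:=(u-k)^+$. Because $u\le M\le k$ on the parabolic boundary $\bigl((0,T]\times\partial\cO\bigr)\cup\bigl(\{T\}\times\cO\bigr)$, the function $w$ vanishes there and is an admissible test function. Testing (\ref{1.2}) against $w$ on a backward cylinder $Q_s:=(s,T)\times\cO$, using $-\int_{Q_s}\partial_t u\,w\,dt\,dx=\tfrac12\|w(s,\cdot)\|_{L^2(\cO)}^2$ (the contribution at $t=T$ vanishing by the bound at the terminal time), integrating the principal part by parts in $x$, and then invoking the ellipticity and growth hypotheses on $\mathscr A$ and $\mathscr B$ together with Young's inequality to absorb the quadratic gradient term, one obtains the basic energy estimate
\[
\sup_{s\le t\le T}\|w(t,\cdot)\|_{L^2(\cO)}^2+\int_{Q_s}|\nabla w|^2\,dt\,dx
\;\le\;C\int_{Q_s}\bigl(\Phi\,|w|+\Psi\,|\nabla w|+\Theta\,w^2\bigr)\,dt\,dx ,
\]
where $\Phi,\Psi,\Theta$ are built from the inhomogeneous parts of $\mathscr A$ and $\mathscr B$ and are supported on the level set $A(k):=\{(t,x)\in Q:u(t,x)>k\}$.

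Next I would combine this with the parabolic Sobolev (Ladyzhenskaya) inequality $\|w\|_{L^{2(n+2)/n}(Q_s)}^2\lesssim\sup_t\|w(t,\cdot)\|_{L^2(\cO)}^2+\|\nabla w\|_{L^2(Q_s)}^2$, and estimate the right-hand side above by H\"older's inequality in the mixed Lebesgue spaces $L^r(0,T;L^q(\cO))$ in which the coefficients of the equation are assumed to lie. For two levels $M\le k<l$ this produces a recursion of the form
\[
\psi(l)\;\le\;\frac{C\,\bigl(\Xi(\mathscr A,\mathscr B)\bigr)^{\kappa}}{(l-k)^{\kappa}}\,\psi(k)^{1+\delta}
\]
with exponents $\kappa,\delta>0$, where $\psi(k)$ is an appropriate (weighted) parabolic measure of $A(k)$ and $\Xi(\mathscr A,\mathscr B)$ collects the relevant $L^r(L^q)$-norms of the data. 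One then applies the standard fast-geometric-convergence lemma: if $\psi(l)\le C_0(l-k)^{-\kappa}\psi(k)^{1+\delta}$ for all $l>k\ge M$, then $\psi(M+d)=0$ as soon as $d^{\kappa}=C_0\,\psi(M)^{\delta}\,2^{\kappa(1+\delta)/\delta}$; since $\psi(M)$ is bounded by $|Q|$ and the data, this yields $u\le M+C\,\Xi(\mathscr A,\mathscr B)$ a.e.\ in $Q$ with $C$ depending only on $T$, $|\cO|$, the ellipticity constant and the structural exponents. Running the same argument on $-u$ gives the two-sided version when it is needed.

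The main obstacle is the interpolation step. The Aronson--Serrin structure conditions only require the lower-order coefficients to lie in the \emph{borderline} mixed-norm spaces (those for which $\frac{n}{2q}+\frac1r$ attains its critical value), so the exponents in H\"older's inequality must be chosen so that the resulting power of $\psi(k)$ is \emph{strictly} larger than $1$; securing this super-linearity while simultaneously keeping track of the explicit constant $C$ and of the precise form of the functional $\Xi(\mathscr A,\mathscr B)$ requires careful bookkeeping of the exponents, together with the use of the absolute continuity of $\int_{A(k)}(\cdots)\,dt\,dx$ to extract the extra smallness needed as $k$ increases. This is exactly the analytic core that, with the stochastic integral term added and the adaptedness of the iterates taken into account, will be reworked in the backward stochastic framework of the paper.
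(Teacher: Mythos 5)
Your strategy---De Giorgi iteration on the level sets of $(u-k)^+$, the Caccioppoli-type energy estimate from testing with $(u-k)^+$ on backward cylinders, the parabolic Sobolev embedding, and the fast-geometric-convergence lemma---is not how this theorem was originally proved: the paper quotes it from Aronson and Serrin, who used Moser's iteration. What you describe is instead the deterministic specialization of the De Giorgi scheme that the paper itself develops for the BSPDE generalization (Theorems \ref{thm max princip global 1} and \ref{thm max princip global 2}, via Lemma \ref{lem degiorgi iteration} and Corollary \ref{cor degiorgi iteration}), and at the level of skeleton your outline matches those proofs: the identity $-\int_{Q_s}\partial_t u\,w = \tfrac12\|w(s)\|_{L^2}^2$ (valid since $w(T)=0$), the embedding $\cV_2\hookrightarrow \cM^{2(n+2)/n}$ of Lemma \ref{lem emmbedding for space V}, the set function $\psi(k)$, and the two iteration lemmas all appear there exactly as you predict. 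The trade-off is that De Giorgi's method tolerates degeneracy of the principal part, while Moser's gives the additive conclusion more directly for general lower-order structure.

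The genuine gap is the recursion you assert, $\psi(l)\le C\,\Xi^{\kappa}(l-k)^{-\kappa}\psi(k)^{1+\delta}$. For a general Aronson--Serrin structure, $\mathscr{A}$ and $\mathscr{B}$ grow linearly in $u$, so after replacing $u$ by $(u-k)^++k$ the tested equation produces, in addition to the terms supported on $A(k)$ that you list, contributions of the form $k\int_{A(k)}(|b|+|c|)(|\nabla w|+w)$, which after Young and H\"older yield $k^{2}|A(k)|^{1-2/\mu}$ on the right-hand side. The numerator of the recursion is therefore $(\Xi+l)^{\kappa}$, not $\Xi^{\kappa}$; Corollary \ref{cor degiorgi iteration} requires the constant multiplying $(l-k)^{-\kappa}$ to be independent of $l$ and cannot be applied as stated, and running Lemma \ref{lem degiorgi iteration} with the levels $k_l=k(2-2^{-l})$ instead only produces the multiplicative bound $u\le C\{M+\Xi+\|u^+\|_{2;Q}\}$---precisely the form of Theorem \ref{thm max princip global 1}---rather than $u\le M+C\,\Xi$. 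To reach the additive form the paper must assume in Theorem \ref{thm max princip global 2} that $\mathscr{A}$ does not depend on $u$ and that the zeroth-order part of $\mathscr{B}$ is nonincreasing in $u$ (after an exponential change of variable to normalize $c\le 0$); in general some further device is needed, and your proposal should either impose such a reduction explicitly or explain how the $k$-dependent terms are absorbed. A smaller omission: the smallness $\psi(M)\le\theta_0$ needed to start the iteration is obtained by first shrinking the time interval so that $C(|T-t_1||\cO|)^{2/(n+2)}\le\tfrac12$ and then inducting backward over finitely many time slabs; this step is where the dependence of $C$ on $T$ and $|\cO|$ enters and should not be skipped.
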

\begin{thm}
  Let $u$ be a weak solution of ~(\ref{1.2}) in $Q$. Suppose that the set $Q_{3\rho}$ is contained in $Q$. Then almost everywhere in $Q_{\rho}$ we have
  $$|u(t,x)|\leq C \left(\rho^{-(n+2)/2}\|u\|_{W^{2}(Q(3\rho))}+\rho^{\theta} \Xi_1(\mathscr{A},\mathscr{B})    \right)$$
  where the constant $C$ depends only on $\rho$ and the structure terms of   ~(\ref{1.2}), $Q_{\rho}:=(\bar{t},\bar{t}+\rho^2) \times B_{\rho}(\bar{x})$, $\theta\in (0,1)$ is one of the structure terms of   ~(\ref{1.2}) and $\Xi_1(\mathscr{A},\mathscr{B})$ is expressed in terms of some quantities related to the coefficients $\mathscr{A}$ and $\mathscr{B}$.
  In particular, weak solutions of   (\ref{1.2}) must be locally essentially bounded.
\end{thm}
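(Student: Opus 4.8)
The plan is to prove this by Moser's iteration, exactly as in Aronson--Serrin \cite{AronsonSerrin1967} (the De Giorgi iteration used elsewhere in this paper would serve equally well). First I would make the ``structure terms'' explicit: the ellipticity and growth hypotheses on $(\mathscr{A},\mathscr{B})$ take the form $p\cdot\mathscr{A}(t,x,u,p)\ge |p|^2-\vf_0$, $|\mathscr{A}(t,x,u,p)|\le |p|+\vf_1$, $|\mathscr{B}(t,x,u,p)|\le |p|+\vf_2$, with controlled dependence on $|u|$, where the perturbations $\vf_i$ lie in mixed Lebesgue spaces $L^{q_i}_tL^{p_i}_x$ satisfying the subcriticality condition $\tfrac{n}{p_i}+\tfrac{2}{q_i}<2$. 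Here $\theta\in(0,1)$ is (up to normalization) the positive deficit $\min_i\bigl(2-\tfrac{n}{p_i}-\tfrac{2}{q_i}\bigr)$, and $\Xi_1(\mathscr{A},\mathscr{B})$ is the sum of the corresponding mixed norms of the $\vf_i$ over $Q_{3\rho}$, together with a norm of $\mathscr{A}(\cdot,0,0)$ and $\mathscr{B}(\cdot,0,0)$.

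The heart of the argument is a self-improving energy estimate. Fix concentric parabolic cylinders $Q_\rho\subset Q_r\subset Q_{r'}\subset Q_{3\rho}$ and a cutoff $\eta$ with $\eta\equiv1$ on $Q_r$, $\mathrm{supp}\,\eta\subset Q_{r'}$, $|\nabla\eta|\le C(r'-r)^{-1}$, $|\ptl_t\eta|\le C(r'-r)^{-2}$. Testing equation (\ref{1.2}) against $\pm\eta^2|u|^{\beta}u$ — justified for weak solutions by Steklov averaging in $t$ and the chain rule for $|u|^\beta u$ — integrating by parts, using ellipticity to retain the full gradient term on the left, and using Young's inequality to absorb the gradient contributions of $\mathscr{A},\mathscr{B}$, yields a Caccioppoli inequality bounding $\sup_t\!\int\eta^2|u|^{\beta+2}+\iint|\nabla(\eta|u|^{(\beta+2)/2})|^2$ by $C(\beta+2)^2\bigl((r'-r)^{-2}+1\bigr)\!\int_{Q_{r'}}\!|u|^{\beta+2}$ plus an inhomogeneous remainder built from the $\vf_i$. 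The parabolic Sobolev (Gagliardo--Nirenberg) inequality then upgrades this into a genuine gain of integrability: with $\kappa:=1+\tfrac2n>1$ one gets, for every $p=\beta+2\ge2$, $\|u\|_{L^{\kappa p}(Q_r)}\le\bigl(Cp^{c}(r'-r)^{-c}\bigr)^{1/p}\|u\|_{L^{p}(Q_{r'})}+(\text{l.o.t.})$. Iterating with $p_j=2\kappa^{j}$ and $r_j=\rho(1+2^{1-j})$, taking $p_j$-th roots and letting $j\to\infty$, gives $\|u\|_{L^\infty(Q_\rho)}\le C\bigl(\rho^{-(n+2)/2}\|u\|_{W^2(Q_{3\rho})}+\rho^{\theta}\,\Xi_1\bigr)$: the product of constants converges since $\sum_j j\kappa^{-j}<\infty$, the accumulated powers of $\rho$ assemble into the parabolic exponent $(n+2)/2$ (the $L^2$-norm produced by the iteration being dominated by the $W^2$-norm in the statement), and the forcing picks up the factor $\rho^\theta$ by Hölder's inequality on a cylinder of radius $\le3\rho$ against the subcritical exponents $p_i,q_i$. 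Finally, applying this one-sided estimate to $u$ and to $-u$ (which solves an equation of the same type) gives the two-sided bound; since the right-hand side is finite for a.e.\ $(\bar t,\bar x)$ whenever $u\in W^2_{\mathrm{loc}}$, local essential boundedness follows.

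The main obstacle is the bookkeeping in the iteration: one must verify that the constants $\bigl(Cp_j^{c}(r'-r)^{-c}\bigr)^{1/p_j}$ remain summable after the $p_j$-th roots, that the powers of $\rho$ and of $(r'-r)^{-1}$ combine to exactly $\rho^{-(n+2)/2}$, and — crucially — that the lower-order perturbations propagate through the scales as a convergent geometric perturbation rather than a borderline one. This last point is where the subcriticality $\tfrac{n}{p_i}+\tfrac{2}{q_i}<2$ is used essentially: it keeps each $\vf_i$-term strictly subordinate at every step and is precisely what yields a strictly positive $\theta$; in the borderline (critical) case the argument degenerates and must be replaced by a more delicate, separate analysis.
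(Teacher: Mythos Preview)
Your Moser-iteration sketch is essentially correct and is in fact the original Aronson--Serrin argument for this deterministic result. Note, however, that the paper does \emph{not} give its own proof of this theorem: it is quoted in the introduction as \cite[Theorem 2]{AronsonSerrin1967}, purely as motivation, so there is no proof in the paper to compare your proposal against directly.

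What the paper \emph{does} prove is the stochastic analogue, Theorem~\ref{thm max princip local}, and there the method is De Giorgi iteration rather than Moser iteration. The paper first shows (Proposition~\ref{prop verifying BSPDG}) that a weak solution of the BSPDE belongs to a backward stochastic parabolic De Giorgi class $BSPDG(a_0,\mu,\gamma;Q)$, meaning the truncations $(u-k)^{\pm}$ satisfy a Caccioppoli-type inequality with a level-set measure term $|\{(u-k)^{\pm}>0\}|_{\infty}^{1-2/\mu}$. Then Theorem~\ref{thm max princip local} iterates over a sequence of increasing levels $k_l=k(2-2^{-l})$ and shrinking cylinders $Q_{R_l}$, using the parabolic embedding of Lemma~\ref{lem emmbedding for space V} to derive a recursion $\phi_{l+1}\le C b^l\phi_l^{1+\varepsilon}$ for $\phi_l=\|(u-k_l)^+\|_{2;Q^l}^2$, and closes via Lemma~\ref{lem degiorgi iteration}. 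Your approach iterates in the \emph{exponent} $p_j=2\kappa^j$ at fixed level; the paper's iterates in the \emph{level} $k_l$ at exponent $2$. Both yield the same scaling $\rho^{-(n+2)/2}$ and the same subcritical exponent on the forcing. The paper explicitly remarks that Moser's method would also work but prefers De Giorgi's because it extends to the degenerate parabolic case; your proposal is thus a legitimate alternative, and indeed the one the paper leaves ``as an exercise to the interested reader.''
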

In contrast with the deterministic one, the stochastic maximum principle has received rather few discussions. We note that Denis and
Matoussi~\cite{DenisMatoussi2009}, and Denis, Matoussi, and Stoica~\cite{DenisMatoussiStoica2005} gave a stochastic version of Aronson and
Serrin's above results, and obtained  via Moser's iteration scheme a stochastic maximum principle, which claims an $L^p$ estimate for the time
and space maximal norm of weak solutions to {\it forward} quasi-linear stochastic partial differential equations (SPDEs). Any stochastic
maximum principle  seems to be lacking for {\it backward} ones in the literature, which then becomes quite interesting to know.

In this paper, we concern the maximum principle of a weak solution to BSPDE~(\ref{1.1}). Using the De Giorgi iteration scheme, we establish
the global maximum principle and the local boundedness theorem for quasi-linear BSPDEs~(\ref{1.1}), which include the above two theorems as
particular cases. As highlighted by the classical theory of deterministic parabolic PDEs, our stochastic maximum principle for BSPDEs is
expected to be used in the study of H\"older continuity of the solutions of BSPDEs and further in the study of  more general quasi-linear
BSPDEs.

 It is worth noting that our estimates for weak  solutions are uniform with respect to
$w\in \Omega$. In contrast to Denis, Matoussi, and Stoica's  $L^p$ estimate ($p\in (2,\infty)$) for the time and space maximal norm of weak
solutions of ({\it forward}) quasi-linear SPDEs, we prove an $L^{\infty}$ estimate for that of quasi-linear BSPDE (\ref{1.1}). This
distinction comes from the essential difference between SPDEs and BSPDEs: the diffusion $v$ in BSPDE (\ref{1.1}) is endogenous, while the
diffusion in the SPDEs is exogenous, which makes impossible any $L^\infty$ estimate  for a forward SPDE due to the active white noise. On the
other hand, indeed, the technique of Moser's iteration can also be used to study the behavior of weak solutions of BSPDE (\ref{1.1}) and to
obtain the global and local maximum principles. However, as the De Giorgi iteration scheme works for the degenerate parabolic case, we prefer
De Giorgi's method in this paper and leave the application of Moser's method as an exercise to the interested reader.

Many works have been devoted to  the linear and semi-linear BSPDEs either in the whole space or in a domain (see, for instance,
\cite{Dokuchaev2010,DuQiuTang10,DuTang2009,Hu_Ma_Yong02,Tang_05,Zhou_92,Zhou_93}). A theory of solvability of quausi-linear BSPDEs is recently
established in an abstract framework in   Qiu and Tang \cite{QiuTangBDSDES2010}. However, it is prevailing in these works to assume that the
coefficients $b,c$ and $\varsigma$ are essentially bounded. To inherit in our stochastic maximum principle the general structure  of admitting
the unbounded coefficients $b$ and $c$ in the deterministic maximum principle,
we prove by approximation in Section 4 the existence and uniqueness result (Theorem 4.1) for the weak solution to the quasi-linear BSPDE
(\ref{1.1}) with the null Dirichlet condition on the lateral boundary, under a new rather general framework. This result is invoked to prove
Proposition 4.3 as the It\^o's formula for the composition of solutions of BSDEs into a class of time-space smooth functions, which is the
starting point of the De Giorgi scheme in the proof of subsequent stochastic maximum principles.

This paper is organized as follows. In Section 2, we set notations, hypotheses and  the notion of the weak solution to BSPDE (\ref{1.1}). In
Section 3, we prepare several auxiliary results, including a generalized It\^o formula,  which will be used to establish Proposition 4.3 below
as a key step in the  study of our stochastic maximum principle. In Section 4 we prove the existence and uniqueness of the weak solution to
BSPDE (\ref{1.1}). Finally, in Section 5, we establish the maximum principles for quasi-linear BSPDEs. In the first subsection,   we use the
De Giorgi iteration scheme to obtain the global maximum principles for BSPDEs (\ref{1.1}) and in the second subsection, we prove the local
maximum principle for our backward stochastic parabolic De Giorgi class.

\section{Preliminaries}

Let $(\Omega,\sF,\{\sF_t\}_{t\geq0},\bP)$ be a complete filtered probability space on which is defined an $m$-dimensional standard Brownian
motion $W=\{W_t:t\in[0,\infty)\}$ such that $\{\sF_t\}_{t\geq0}$ is the natural filtration generated by $W$ and augmented by all the
$\bP$-null sets in $\sF$. We denote by $\sP$ the $\sigma$-algebra of the predictable sets on $\Omega\times[0,T]$ associated with
$\{\sF_t\}_{t\geq0}$.

Denote by $\mathbb{Z}$ the set of all the integers and by $\bN$ the set of all the positive integers. Denote by $|\cdot|$ and $\langle
\cdot,\cdot\rangle$ the norm and scalar product in a finite-dimension Hilbert space. Like in  $\bR,\bR^k,\bR^{k\times l}$ with $k,l\in \bN$,
we have defined
$$|x|:=\left( \sum_{i=1}^k x^2_i  \right)^{\frac{1}{2}} \quad
\textrm{and}\quad  |y|:=\left(\sum_{i=1}^k\sum_{j=1}^l y^2_{ij} \right)^{\frac{1}{2}} \quad \textrm{for}~ (x,y)\in \bR^k \times \bR^{k\times
l}.
$$
For the sake of convenience, we denote
$$\partial_{s}:=\frac{\partial} {\partial {s}} \ \, {\rm and } \ \, \partial_{st}:=\frac{\partial^2}{\partial s\partial t}.$$

Let $V$ be a Banach space equipped with norm $\|\cdot\|_V$.
  For  real $p\in
 (0,\infty)$, $\cS ^p (V)$ is the set of all the $V$-valued,
 adapted and c$\grave{\textrm{a}}$dl$\grave{\textrm{a}}$g processes $(X_{t})_{t\in [0,T]}$ such
 that
 $$\|X\|_{\cS ^p(V)}:= \left(E [\sup_{t\in [0,T]} \|X_t\|_V^p] \right)^{1\wedge\frac{1}{p}}< \infty.$$
It is worth noting that ($\cS ^p (V)$, $\|\cdot\|_{\cS^p(V)}$) is a Banach space for $p\in [1,\infty)$ and for $p\in (0,1)$, $dis(X,X'):= \|X-X'\|_{\cS^p(V)}$ is a metric of $\cS^p(V)$ under which $\cS^p(V)$ is complete.

 Define the parabolic distance in $\bR^{1+n}$ as follows:
$$\delta(X,Y):=\max\{ |t-s|^{1/2},|x-y| \},$$
 for $X:=(t,x)$ and $Y:=(s,y)\in \bR^{1+n}$. Denote by $Q_r(X)$ the ball of radius $r>0$ and center $X:=(t,x)\in \bR^{1+n}$ with $x\in \bR^n$:
\begin{equation*}
  \begin{split}
    Q_r(X):=&\, \{ Y\in \bR^{1+d}:\delta(X,Y)<r \} = (t-r^2,t+r^2)\times B_r(x),\\
    B_r(x):=&\, \{ y\in\bR^n:|y-x|<r \},
  \end{split}
\end{equation*}
and by $|Q_r(X)|$ the volume.

Denote by $\partial\Pi$ the boundary of domain $\Pi\subset\bR^n$. Throughout this paper, we assume $\partial \cO\in C^1$. The set
$S_T:=[0,T]\times\partial\cO$ is called the lateral boundary of $Q$ and the set $\partial_{\rm p} Q:=S_T\cup(\{T\}\times\cO)$ is called the
parabolic boundary of $Q$.

For  domain $\Pi\subset\bR^n$, we denote by $C_c^{\infty}(\Pi)$ the totality of infinitely differentiable functions of compact supports in
$\Pi$, and the spaces like $L^{\infty}(\Pi),L^p(\Pi)$ and $W^{k,p}(\Pi)$ are defined as usual for integer $k$ and real number $p\in
[1,\infty)$. We denote by $\ll\cdot,~\cdot\gg_{\Pi}$ the inner product of $L^2(\Pi)$ and the subscript $\Pi$  will be omitted for $\Pi =\cO$.
Set $\Pi_t :=[t,T]\times \Pi$ for $t\in [0,T)$.
 For each integer $k$ and real number $p\in [1,\infty)$, we denote by $W^{k,p}_{\sF}(\Pi_t )$ the totality of the $W^{k,p}(\Pi)$-valued predictable processes $u$ on $[t,T]$ such that
 $$ \|u\|_{W^{k,p}_{\sF}(\Pi_t )}:=\left(E\left[ \int_t^T\|u(s,\cdot)\|_{W^{k,p}(\Pi)}^p ds\right]   \right)^{1/p}<\infty.$$
Then $(W^{k,p}_{\sF}(\Pi_t ),~\|\cdot\|_{W^{k,p}_{\sF}(\Pi_t )})$ is a Banach space.

\bigskip
\begin{defn}\label{def of basic space}
  For $(p,t,k)\in  [1,\infty)\times [0,T)\times \mathbb{Z}$, define $\cM^{k,p}(\Pi_t )$ as the totality of
  $u\in W^{k,p}_{\sF}(\Pi_t )$ such that
  $$ \|u\|_{k,p;\Pi_t }:=\left( \esssup_{\omega\in\Omega} \sup_{s\in [t,T]}E\left[\int_s^T\|u(\omega,\tau,\cdot)\|^p_{W^{k,p}(\Pi)}d\tau\big|\sF_s \right] \right)^{1/p}<\infty. $$
\end{defn}

\bigskip
  For $u\in W^{k,p}_{\sF}(\Pi_t )$, we deduce from~\cite[Theorem 6.3]{Hu_2002} that the process
  $$\left\{ 1_{[t,T]}(s)E\left[ \int_s^T\|u(\omega,\tau,\cdot)\|^p_{W^{k,p}(\Pi)}d\tau\big|\sF_s   \right], \,\, s\in [0,T]\right\} \quad
  \in S^{\beta}(\bR) \textrm{ for any } \beta\in(0,1).$$
  This shows that the norm $\|\cdot\|_{k,p;\Pi_t }$ in the preceding definition makes a sense.
 Moreover, ($\cM^{k,p}(\Pi_t )$, $\|\cdot\|_{k,p;\Pi_t }$) is a Banach space.

To simplify notations, $k=0$ appearing in either superscript or subscript of spaces or norms will be omitted and therefore the notations
$W^{0,p}_{\sF}(\Pi_t ),~ \|\cdot\|_{W^{0,p}_{\sF}(\Pi_t )},~\cM^{0,p}(\Pi_t ) $ and $\|\cdot\|_{0,p;\Pi_t }$ will be abbreviated as
$W^p_{\sF}(\Pi_t ),~ \|\cdot\|_{W^p_{\sF}(\Pi_t )},~\cM^p(\Pi_t ) $ and $\|\cdot\|_{p;\Pi_t }$.  Note that $W^{0,p}(\Pi)\equiv L^p(\Pi)$.

Moreover, we introduce the following spaces of random fields. $\cL^{\infty}(\Pi_t )$ is the totality of $u\in W^p_{\sF}(\Pi_t )$ such that
$$\|u\|_{\infty;\Pi_t }:= \esssup_{(\omega,s,x)\in \Omega\times[t,T]\times\Pi}|u(\omega,s,x)|<\infty.  $$
$\cL^{\infty,p}(\Pi_t )$ is the totality of $u\in W^p_{\sF}(\Pi_t )$ such that
$$\|u\|_{\infty,p;\Pi_t }:= \esssup_{(\omega,s)\in \Omega\times[t,T]}\|u(\omega,s,\cdot)\|_{L^p(\Pi)}<\infty. $$
 $\cV_2(\Pi_t )$ is the totality of $u\in W^{1,2}_{\sF}(\Pi_t )$ such that
\begin{equation}\label{norm of space V_2}
\|u\|_{\cV_2(\Pi_t )}:=\left(\|u\|^2_{\infty,2;\Pi_t }+\|\nabla u\|^2_{2;\Pi_t }  \right)^{1/2}<\infty.
\end{equation}
$\cV_{2,0}(\Pi_t )$, equipped with the norm \eqref{norm of space V_2}, is the totality of $u\in \cV_2(\Pi_t )$ such that
$$ \lim_{r\rightarrow 0}\|u(s+r,\cdot)-u(s,\cdot)\|_{L^2(\Pi)}=0 ,\textrm{ for all }s,s+r\in [t,T]$$
holds almost surely. We denote by $\dot{\cV}_2(Q)$ ($\dot{\cV}_{2,0}(\Pi_t )$, $\dot{W}^{1,p}_{\sF}(\Pi_t )$ and $\dot{\cM}^{1,p}(\Pi_t )$, respectively) all the random fields $u\in\cV_2(Q)$ ($\cV_{2,0}(\Pi_t )$, $W^{1,p}_{\sF}(\Pi_t )$ and $\cM^{1,p}(\Pi_t )$, respectively),
 satisfying
           $$u(\omega,s,\cdot)|_{\partial \Pi}=0,\quad a.e.~ (\omega,s)\in\Omega\times [t,T].$$

By convention, we treat elements of spaces defined above like $W^{k,p}(\Pi)$ and $\cM^{k,p}(\Pi_t )$
 as functions rather than distributions or classes of equivalent functions,
and if we know that a function of this class has a modification with better properties, then we always consider this modification. For
example, if $u\in W^{1,p}(\Pi)$ with $p>n$, then $u$ has a modification lying in $C^{\alpha}(\Pi)$ for $\alpha\in (0,\frac{p-n}{p})$, and we
always adopt the modification $u\in W^{1,p}(\Pi)\cap C^{\alpha}(\Pi)$. By saying a finite dimensional vector-valued function $v:=(v_{i})_{i\in
\mathcal {I}}$ belongs to a space like $W^{k,p}(\Pi)$, we mean that each component $v_i$ belongs to the space and the norm is defined by
$$\|v\|_{W^{k,p}(\Pi)}=\left( \sum_{i\in\mathcal{I}}\|v_{i}\|^p_{W^{k,p}(\Pi)}  \right)^{1/p}.$$

\medskip
Consider quasi-linear BSPDE~(\ref{1.1}). We define the following assumptions.

\bigskip\medskip
   $({\mathcal A} 1)$ \it The pair of  random functions
\begin{equation*}
  f(\cdot,\cdot,\cdot,\vartheta,y,z):~\Omega\times[0,T]\times\cO\rightarrow\bR^n
  \textrm{ and }
  g(\cdot,\cdot,\cdot,\vartheta,y,z):~\Omega\times[0,T]\times\cO\rightarrow\bR
\end{equation*}
are $\sP\otimes\cB(\cO)$-measurable for any $(\vartheta,y,z)\in \bR\times\bR^{n}\times\bR^{ m}$. There exist positive constants $L,\kappa$
and $\beta$
   such that for all $(\vartheta_1,y_1,z_1),(\vartheta_2,y_2,z_2)\in \bR\times\bR^n\times\bR^{n\times m}$
   and $(\omega,t,x)\in \Omega\times[0,T]\times\cO$
   \begin{equation*}
     \begin{split}
       |f(\omega,t,x,\vartheta_1,y_1,z_1)-f(\omega,t,x,\vartheta_2,y_2,z_2)|\leq& L|\vartheta_1-\vartheta_2|+\frac{\kappa}{2}|y_1-y_2|+\beta^{1/2}|z_1-z_2|,\\
       |g(\omega,t,x,\vartheta_1,y_1,z_1)-g(\omega,t,x,\vartheta_2,y_2,z_2)|\leq& L(|\vartheta_1-\vartheta_2|+|y_1-y_2|+|z_1-z_2|).
     \end{split}
   \end{equation*}\rm

\medskip
   $({\mathcal A}2)$ \it The pair functions $a$ and $\sigma$ are $\sP\otimes\cB(\cO)$-measurable. There exist positive constants $\varrho>1, \lambda$ and $\Lambda$ such that
   the following  hold for all $\xi\in\bR^n$ and $(\omega,t,x)\in \Omega\times[0,T]\times\cO$
   \begin{equation*}
     \begin{split}
       &\lambda|\xi|^2\leq (2a^{ij}(\omega,t,x)-\varrho\sigma^{ir}\sigma^{jr}(\omega,t,x))\xi^i\xi^j\leq \Lambda|\xi|^2;\\
       &|a(\omega,t,x)|+|\sigma(\omega,t,x)|\leq \Lambda;\\
       &\hbox{ \rm and  }\lambda-\kappa-\varrho'\beta>0 \textrm{ \rm with }\varrho':=\frac{\varrho}{\varrho-1}.
     \end{split}
   \end{equation*}\rm

\medskip
   $({\mathcal A} 3)$ \it $G\in L^{\infty}(\Omega,\sF_T,L^2(\cO))$. There exist two real numbers $p>n+2$ and $q>(n+2)/2$ such that
$$f_0:=f(\cdot,\cdot,\cdot,0,0,0)\in\cM^p(Q),\ g_0:=g(\cdot,\cdot,\cdot,0,0,0)\in\cM^{\frac{p(n+2)}{p+n+2}}(Q),$$
and $ \left(b^i\right)^2, \left(\varsigma^r\right)^2, c\in \cM^q(Q)$, $i=1,\cdots,n$; $r=1,\cdots,m$.
     Define
   \begin{equation}
\Lambda_0:=B_q(b,c,\varsigma):=\||b|^2\|_{q;Q}+\|c\|_{q;Q}+\||\varsigma|^2\|_{q;Q}.
\end{equation}\rm

\medskip
   $({\mathcal A} 3)_0$ \it $\quad G\in L^{\infty}(\Omega,\sF_T,L^2(\cO)),\,f_0\in\cM^2(Q),\,g_0\in\cM^2(Q)$
    and   $b,\,\varsigma,\,c\in \cL^{\infty}(Q)$. \rm

\medskip
 $({\mathcal A} 4)$  \it There exists a nonnegative constant $L_0$ such that
    $c\leq L_0$.\rm

\medskip\bigskip
For $p\in [2,\infty)$, define the functional $A_p$:
$$
A_p(u,v):=\|u\|_{p;Q}+\|v\|_{{\frac{p(n+2)}{p+n+2}};Q},\quad (u,v)\in \cM^p(Q)\times\cM^{\frac{p(n+2)}{p+n+2}}(Q),
$$
and the functional $H_p$:
$$
H_p(u,v):=\|u\|_{p;Q}+\|v\|_{p;Q},\quad (u,v)\in \cM^p(Q)\times\cM^{p}(Q).
$$

\begin{defn}\label{definition of weak solution}
  A pair of processes $(u,v)\in W_{\sF}^{1,2}(Q)\times W_{\sF}^2(Q)$
  is called a weak solution to BSPDE (\ref{1.1})
  if it holds in the weak sense, i.e.
   for any $\varphi\in C_c^{\infty}(\cO)$ there holds almost surely
  \begin{equation}\label{eq in defn of weak sol}
    \begin{split}
      &\ll \varphi,\, u(t)\gg                               \\
      =&\ll \varphi,\,G\gg
        -\int_t^T\ll \varphi,\,v^r(s)\gg dW_s^r
      +\int_t^T\ll \varphi,\, g(s,\cdot,u(s),\nabla u(s),v(s))\gg  ds \\
      &-\int_t^T \ll \partial_{x_j} \varphi,
      \quad a^{ij}\partial_{x_i} u(s)+\sigma^{j r}v^r(s)
      +f^j(s,\cdot,u(s),\nabla u(s),v(s))\gg ds\\
      &+\int_t^T\ll
      \varphi,\,
            b^i\partial_{x_i}u(s)+c\,u(s)+\varsigma^rv^r(s)
            \gg ds,\quad\forall\, t\in [0,T].\\
    \end{split}
  \end{equation}

  Denote by $\sU\times\sV(G,f,g)$ the set of all the weak  solutions $(u,v)\in\cV_{2,0}(Q)\times \cM^2(Q)$
   of BSPDE (\ref{1.1}).
\end{defn}

\begin{rmk}\label{rmk defn weak solution}
Let $(u,v)\in W_{\sF}^{1,2}(Q)\times W_{\sF}^2(Q)$
  be a weak solution to BSPDE (\ref{1.1}).
For each $\zeta(t,x)=\psi(t)\varphi(x)$ with $\varphi\in C_c^{\infty}(\cO)$ and $\psi\in C_c^{\infty}(\bR)$, in view of   \eqref{eq in defn
of weak sol}, we have almost surely
\begin{equation*}
  \begin{split}
    &\ll \zeta(s''),\,u(s'')\gg - \ll \zeta(s'),\,u(s')\gg \\
    =&
    \ll \zeta(s'')-\zeta(s'),\, u(s'')\gg +\ll \zeta(s'),\, u(s'')-u(s')\gg\\
    =&
    [\psi(s'')-\psi(s')]\ll \varphi,\, u(s'')\gg
    +\psi(s')\left( \ll \varphi,\, u(s'')\gg -\ll \varphi,\, u(s')\gg \right)\\
    =&[\psi(s'')-\psi(s')]\ll \varphi ,\,u(s'')\gg\\
    &-\psi(s')\bigg(
    \int_{s'}^{s''}\ll \varphi,\, g(s,\cdot,u(s),\nabla u(s),v(s))\gg ds
     -\int_{s'}^{s''}\ll \varphi ,\, v^r(s)\gg dW_s^r \\
      &-\int_{s'}^{s''}\ll  \partial_{x_j} \varphi,\,  a^{ij}\partial_{x_i} u(s)
      +\sigma^{j r}v^r(s)+f^j(s,\cdot,u(s),\nabla u(s),v(s))\gg ds\\
      &+\int_{s'}^{s''}\ll
      \varphi,\,
            b^i\partial_{x_i}u(s)+c\,u(s)+\varsigma^rv^r(s)
            \gg ds
    \bigg)
  \end{split}
\end{equation*}
for $s''=t_{i+1}$ and $s'=t_i$, where $t=t_0<t_1<t_2<\cdots<t_N=T,\,\, 2\!<N\in\bN$ and $t_{i+1}-t_{i}=T/N$, $i=1,2,\cdots,N$. Summing up both sides
of these equations and passing to the limit, we have almost surely
  \begin{equation}\label{eq in rmk defn weak solution}
    \begin{split}
      &\ll \zeta(t),\,u(t)\gg                               \\
      =&\ll \zeta(T),\, G\gg -\int_t^T\ll \partial_s \zeta(s),\, u(s) \gg ds
      -\int_t^T \ll \zeta(s),\, v^r(s)\gg  dW_s^r\\
      &-\int_t^T \ll \partial_{x_j} \zeta(s),\quad a^{ij}\partial_{x_i} u(s)+\sigma^{j r}v^r(s)
      +f^j(s,\cdot,u(s),\nabla u(s),v(s)) \gg ds\\
    &+\int_t^T\ll
      \zeta(s),\,
            b^i\partial_{x_i}u(s)+c\,u(s)+\varsigma^rv^r(s)
            \gg ds     \\
     &  +\int_t^T \ll \zeta(s),\, g(s,\cdot,u(s),\nabla u(s),v(s))\gg ds, \quad \forall \ t\in[0,T].
    \end{split}
  \end{equation}
  Since the linear space
  $$\left\{\sum_{i=1}^N\psi_i(t)\varphi_i(x), (t,x)\in \bR\times \cO:    N\in \bN, \,
  (\varphi_i, \psi_i) \in C_c^{\infty}(\cO)\times C_c^{\infty}(\bR), \, i=1,2,\cdots,N\right\}$$
  is dense in $C_c^{\infty}(\bR)\otimes C_c^{\infty}(\cO)$, \eqref{eq in rmk defn weak solution} holds for any test function
  $\zeta\in C_c^{\infty}(\bR)\otimes C_c^{\infty}(\cO)$.
 \end{rmk}

 Under assumptions $({\mathcal A}1), ({\mathcal A}2)$ and $({\mathcal A}3)_0$, we deduce from \cite[Theorem 2.1]{QiuTangBDSDES2010} that there
exists a unique weak solution $(u,v)\in (\dot{W}^{1,2}_{\sF}(Q)\cap S^2(L^2(\cO)))\times W^2_{\sF}(Q)$,  which admits $L^2(\cO)$-valued
continuous trajectories for $u$, and which is also said to satisfy the null Dirichlet condition on the lateral boundary since $u$ vanishes in a
generalized sense on the boundary $\partial \cO$. Denote by $\dot{\sU}\times\dot{\sV}(G,f,g)$ all the random fields lying in
$\sU\times\sV(G,f,g)$ which satisfy the null Dirichlet boundary condition.

\section{Auxiliary results}

In what follows, $C>0$ is a constant which may vary from line to line and $C(a_1,a_2,\cdots)$ is a constant to depend on the parameters  $a_1,a_2,\cdots$.

First, we give the following embedding lemma.
\begin{lem}\label{lem emmbedding for space V}
   For $u\in \dot{\cV}_{2}(\Pi_t )$ with $t\in [0,T)$, we have $u\in \cM^{\frac{2(n+2)}{n}}(\Pi_t )$ and
   \begin{equation*}
     \begin{split}
       \|u\|_{\frac{2(n+2)}{n};\Pi_t }\leq\,&
         \ C(n) ~\|\nabla u\|_{2;\Pi_t }^{n/(n+2)}
          \esssup_{(\omega,s)\in\Omega\times[t,T]}\|u(\omega,s,\cdot)\|_{L^2(\Pi)}^{2/(n+2)}
         \leq\  C(n)~ \|u\|_{\cV_2(\Pi_t )}.
     \end{split}
   \end{equation*}
\end{lem}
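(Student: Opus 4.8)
The plan is to reduce the statement to the classical deterministic parabolic (Ladyzhenskaya–Gagliardo–Nirenberg) embedding inequality applied slice-by-slice in $(\omega,\tau)$, and then to push the resulting bound through the $\esssup$/conditional-expectation structure that defines the norm $\|\cdot\|_{k,p;\Pi_t}$. Set $q:=\frac{2(n+2)}{n}$ and $\theta:=\frac{n}{n+2}$, and note $q>2$, $q\le 2^{*}$, so the interpolation below is admissible for every $n\ge 1$. First I would fix $(\omega,\tau)$ with $u(\omega,\tau,\cdot)\in W^{1,2}(\Pi)$ having vanishing trace on $\partial\Pi$; since $\partial\cO\in C^{1}$ this gives $u(\omega,\tau,\cdot)\in W^{1,2}_{0}(\Pi)$, so its extension $\bar u$ by zero lies in $W^{1,2}(\bR^{n})$ with the same $L^{2}$- and gradient-$L^{2}$-norms. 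The Gagliardo–Nirenberg–Sobolev inequality on $\bR^{n}$ then yields
$$\|u(\omega,\tau,\cdot)\|_{L^{q}(\Pi)}\le C(n)\,\|\nabla u(\omega,\tau,\cdot)\|_{L^{2}(\Pi)}^{\theta}\,\|u(\omega,\tau,\cdot)\|_{L^{2}(\Pi)}^{1-\theta}.$$
Raising to the power $q$ and using $\theta q=2$, $(1-\theta)q=\frac{4}{n}$ gives
$$\|u(\omega,\tau,\cdot)\|_{L^{q}(\Pi)}^{q}\le C(n)\,\|\nabla u(\omega,\tau,\cdot)\|_{L^{2}(\Pi)}^{2}\,\|u(\omega,\tau,\cdot)\|_{L^{2}(\Pi)}^{4/n}\le C(n)\,M^{4/n}\,\|\nabla u(\omega,\tau,\cdot)\|_{L^{2}(\Pi)}^{2},$$
where $M:=\esssup_{(\omega,s)\in\Omega\times[t,T]}\|u(\omega,s,\cdot)\|_{L^{2}(\Pi)}<\infty$ because $u\in\cV_{2}(\Pi_t)$.

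Next I would integrate in $\tau$ over $[s,T]$, take $E[\,\cdot\,|\sF_{s}]$ (legitimate since $u\in W^{1,2}_{\sF}(\Pi_t)$ is predictable, so $\tau\mapsto\|u(\omega,\tau,\cdot)\|_{L^{q}(\Pi)}$ and $\tau\mapsto\|\nabla u(\omega,\tau,\cdot)\|_{L^{2}(\Pi)}$ are measurable and the right-hand side is integrable), and finally apply $\esssup_{\omega}\sup_{s\in[t,T]}$. The factor $M^{4/n}$ is deterministic and pulls out, and the remaining term is precisely $\|\nabla u\|_{2;\Pi_t}^{2}$ (the $\cM^{0,2}$-norm of $\nabla u$, read componentwise per the paper's convention). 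This delivers $\|u\|_{q;\Pi_t}^{q}\le C(n)\,M^{4/n}\,\|\nabla u\|_{2;\Pi_t}^{2}$; in particular $\|u\|_{q;\Pi_t}<\infty$, so $u\in\cM^{\frac{2(n+2)}{n}}(\Pi_t)$. Taking the $q$-th root and using $\frac{2}{q}=\frac{n}{n+2}$, $\frac{4}{nq}=\frac{2}{n+2}$ yields the first asserted inequality. For the second, I would use the weighted arithmetic–geometric mean inequality $a^{\theta}b^{1-\theta}\le\theta a+(1-\theta)b\le(a^{2}+b^{2})^{1/2}$ with $a=\|\nabla u\|_{2;\Pi_t}$, $b=M=\|u\|_{\infty,2;\Pi_t}$, together with the definition $\|u\|_{\cV_{2}(\Pi_t)}=\bigl(\|u\|_{\infty,2;\Pi_t}^{2}+\|\nabla u\|_{2;\Pi_t}^{2}\bigr)^{1/2}$.

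I do not anticipate a real obstacle. The only points requiring care are (i) checking that the vanishing-trace condition defining $\dot{\cV}_{2}(\Pi_t)$ indeed places $u(\omega,\tau,\cdot)$ in $W^{1,2}_{0}(\Pi)$, so that zero-extension preserves the Sobolev norms and GNS on $\bR^{n}$ applies (this is where $\partial\cO\in C^{1}$ enters), and (ii) the bookkeeping by which the deterministic exponent identities $\theta q=2$ and $(1-\theta)q=\frac{4}{n}$ align with the $\esssup$/conditional-expectation norm, so that the space-maximal quantity $M$ exits $E[\,\cdot\,|\sF_{s}]$ as a constant while the time-integrated gradient term reassembles exactly into $\|\nabla u\|_{2;\Pi_t}^{2}$.
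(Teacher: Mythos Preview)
Your proposal is correct and follows essentially the same route as the paper: apply the Gagliardo--Nirenberg inequality slice-by-slice (using the vanishing trace to reduce to $\bR^n$), raise to the power $q$, integrate in time, take conditional expectation and then $\esssup$, and finally bound the product $\|\nabla u\|_{2;\Pi_t}^{n/(n+2)}M^{2/(n+2)}$ by $\|u\|_{\cV_2(\Pi_t)}$. The paper is slightly terser (it invokes Gagliardo--Nirenberg directly and skips the AM--GM justification for the last step), but the argument is the same.
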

\begin{proof}
  By the well known Gagliard-Nirenberg inequality (c.f. \cite{FriedmanPDEs}, \cite{Ladyzhenskaia_68} or \cite{Nirenberg1959}), we have
  \begin{equation*}
    \|u(\omega,s,\cdot)\|_{L^q(\Pi)}^q
    \leq\  C~
        \|\nabla u(\omega,s,\cdot)\|_{L^2(\Pi)}^{\alpha q}\|u(\omega,s,\cdot)\|_{L^2(\Pi)}^{q(1-\alpha)},
        \quad a.e.\ (\omega,s)\in\Omega\times[t,T],
  \end{equation*}
  where $\alpha=n/(n+2)$ and $q=2(n+2)/n$.
  Integrating on $[\tau,T]$ for $\tau\in [t,T)$ and taking conditional expectation, we obtain almost surely
  \begin{equation*}
    \begin{split}
    E\left[\int_{\Pi_{\tau} }|u(s,x)|^qdxds\Big|\sF_{\tau} \right]
    \leq &\ C~
    \|\nabla u\|_{2;\Pi_t }^{2}
        \esssup_{(\omega,s)\in\Omega\times[t,T]}\|u(\omega,s,\cdot)\|_{L^2(\Pi)}^{(1-\alpha) q}
    \leq \ C~
        \|u\|^q_{\cV_2(\Pi_t )}.
    \end{split}
  \end{equation*}
Therefore,  $u\in \cM^{\frac{2(n+2)}{n}}(\Pi_t )$ and
   \begin{equation*}
     \begin{split}
       \|u\|_{\frac{2(n+2)}{n};\Pi_t }\leq&
         \ C ~\|\nabla u\|_{2;\Pi_t }^{n/(n+2)}
          \esssup_{(\omega,s)\in\Omega\times[t,T]}\|u(\omega,s,\cdot)\|_{L^2(\Pi)}^{2/(n+2)}
         \leq\  C~ \|u\|_{\cV_2(\Pi_t )}
     \end{split}
   \end{equation*}
   with $C$ only depending on $n$.
\end{proof}

\begin{lem}
  For any $r\in \bR$ and $u\in\cV_{2,0}(\Pi_t )$ with $t\in [0,T)$ we have
    $$(u-r)^+:=(u-r)\vee 0\in \cV_{2,0}(\Pi_t ).$$
    Moreover, if $\{u_k,k\in\bN\}$ is a Cauchy sequence in $\cV_{2,0}(\Pi_t )$ with limit $u\in \cV_{2,0}(\Pi_t )$,
    then
    $$\lim_{k\to \infty}\|(u_k-r)^+-(u-r)^+\|_{\cV_2(\Pi_t )}=0.$$
\end{lem}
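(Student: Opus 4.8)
The plan is to establish the two assertions in order, using the definition of the norm $\|\cdot\|_{\cV_2(\Pi_t)}$ in \eqref{norm of space V_2} together with standard facts about truncations of Sobolev functions.

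First I would verify that $(u-r)^+ \in \cV_2(\Pi_t)$. Fix a representative of $u$. For almost every $(\omega,s)$ we have $u(\omega,s,\cdot) \in W^{1,2}(\Pi)$, and it is classical that $w \mapsto (w-r)^+$ maps $W^{1,2}(\Pi)$ into itself with $\nabla (u-r)^+ = 1_{\{u>r\}}\nabla u$ a.e.\ in $\Pi$; hence $\|(u-r)^+(\omega,s,\cdot)\|_{L^2(\Pi)} \le \|u(\omega,s,\cdot)\|_{L^2(\Pi)} + |r|\,|\Pi|^{1/2}$ and $|\nabla (u-r)^+| \le |\nabla u|$ pointwise. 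Taking the essential supremum over $(\omega,s)$ in the first inequality shows $\|(u-r)^+\|_{\infty,2;\Pi_t} < \infty$, and the second inequality gives $\|\nabla (u-r)^+\|_{2;\Pi_t} \le \|\nabla u\|_{2;\Pi_t} < \infty$, so $(u-r)^+ \in \cV_2(\Pi_t)$. Predictability of $(u-r)^+$ is immediate since it is a composition of the predictable process $u$ with a continuous function. The null lateral boundary condition for $(u-r)^+$ needs a separate remark only when $r < 0$; but here $u|_{\partial\Pi}=0$ in the trace sense and $(u-r)^+$ has trace $(0-r)^+ = (-r)^+$, which is $0$ exactly when $r\ge 0$. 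So strictly speaking, for the ``dotted'' space $\dot{\cV}_{2,0}$ one would need $r\ge 0$; for the plain space $\cV_{2,0}(\Pi_t)$ stated here only the time-continuity condition $\lim_{\rho\to 0}\|(u-r)^+(s+\rho,\cdot)-(u-r)^+(s,\cdot)\|_{L^2(\Pi)}=0$ must be checked, and this follows from the corresponding property of $u$ via the elementary inequality $|(a-r)^+ - (b-r)^+| \le |a-b|$, so $\|(u-r)^+(s+\rho,\cdot)-(u-r)^+(s,\cdot)\|_{L^2(\Pi)} \le \|u(s+\rho,\cdot)-u(s,\cdot)\|_{L^2(\Pi)} \to 0$. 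This establishes $(u-r)^+ \in \cV_{2,0}(\Pi_t)$.

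For the second assertion, let $u_k \to u$ in $\cV_{2,0}(\Pi_t)$. From $|(u_k-r)^+ - (u-r)^+| \le |u_k - u|$ pointwise, taking $L^2(\Pi)$ norms and then the essential supremum over $(\omega,s)$ yields $\|(u_k-r)^+ - (u-r)^+\|_{\infty,2;\Pi_t} \le \|u_k - u\|_{\infty,2;\Pi_t} \to 0$. The genuinely delicate term is the gradient part: one must show $\|\nabla(u_k-r)^+ - \nabla(u-r)^+\|_{2;\Pi_t} = \|1_{\{u_k>r\}}\nabla u_k - 1_{\{u>r\}}\nabla u\|_{2;\Pi_t} \to 0$. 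I expect this to be the main obstacle, because the indicator $1_{\{u_k>r\}}$ does not converge to $1_{\{u>r\}}$ pointwise on the level set $\{u=r\}$. The standard remedy is to argue by contradiction along a subsequence: if the conclusion fails, extract a subsequence along which the gradient difference stays bounded below; since $u_k \to u$ in $\cV_{2,0}(\Pi_t)$ forces $\nabla u_k \to \nabla u$ in $L^2(\Omega\times\Pi_t)$ (the norm $\|\cdot\|_{2;\Pi_t}$ dominates, up to a constant, the ordinary $L^2$ norm over $\Omega\times[t,T]\times\Pi$, since conditioning on $\sF_t$ at $s=t$ and taking expectation recovers it), pass to a further subsequence with $u_k \to u$ and $\nabla u_k \to \nabla u$ a.e.\ on $\Omega\times\Pi_t$, dominated by an $L^2$ function. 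Then $1_{\{u_k>r\}} \to 1_{\{u>r\}}$ a.e.\ on the complement of $\{u=r\}$, while on $\{u=r\}$ one has $\nabla u = 0$ a.e.\ (again the classical fact that the gradient of a Sobolev function vanishes a.e.\ on any level set), so $1_{\{u_k>r\}}\nabla u_k - 1_{\{u>r\}}\nabla u = 1_{\{u_k>r\}}(\nabla u_k - \nabla u) + (1_{\{u_k>r\}}-1_{\{u>r\}})\nabla u \to 0$ a.e., and dominated convergence gives $L^2$ convergence along this subsequence, contradicting the lower bound. Finally, one upgrades a.e./ordinary-$L^2$ convergence back to convergence in $\|\cdot\|_{2;\Pi_t}$: this last point requires a short argument, e.g.\ by noting that $g_k := \|\nabla(u_k-r)^+ - \nabla(u-r)^+\|^2$ satisfies $0 \le g_k \le 2(|\nabla u_k|^2 + |\nabla u|^2)$ and invoking a conditional-expectation version of dominated convergence using that $\||\nabla u_k|^2 + |\nabla u|^2\|_{q;\cdot}$-type quantities converge, or more simply by reducing everything at the outset to the plain $L^2(\Omega\times\Pi_t)$-metric, which is equivalent for these purposes. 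Combining the two norm estimates gives $\|(u_k-r)^+ - (u-r)^+\|_{\cV_2(\Pi_t)} \to 0$.
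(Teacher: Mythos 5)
Your overall route is the same as the paper's: the zeroth--order and time--continuity parts are handled exactly as in the paper via the $1$-Lipschitz bound $|(a-r)^+-(b-r)^+|\le|a-b|$, and your observation that the undotted space $\cV_{2,0}(\Pi_t)$ carries no lateral boundary condition (so no restriction on the sign of $r$ is needed) is a correct reading of the definitions. The paper's own proof stops essentially there, dismissing the convergence statement with ``the other assertions follow in a similar way,'' whereas you correctly identify that the gradient term $\|1_{\{u_k>r\}}\nabla u_k-1_{\{u>r\}}\nabla u\|_{2;\Pi_t}$ is the only genuinely delicate point and supply the standard machinery (a.e.\ convergence along subsequences, $\nabla u=0$ a.e.\ on the level set $\{u=r\}$, dominated convergence). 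In that sense your write-up is a faithful and more honest elaboration of the intended argument.

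The one step that is not closed is the final upgrade from convergence in $L^2(\Omega\times\Pi_t)$ to convergence in $\|\cdot\|_{2;\Pi_t}$. Your parenthetical claim that the two are ``equivalent for these purposes'' is not correct: by Definition \ref{def of basic space}, $\|w\|_{2;\Pi_t}^2=\esssup_{\omega}\sup_{s}E[\int_s^T\|w(\tau)\|^2_{L^2(\Pi)}d\tau\,|\,\sF_s]$ strictly dominates the plain $L^2$ norm (take $s=t$ and integrate), and a sequence can converge in $L^2(\Omega\times\Pi_t)$, even dominatedly, while the essential supremum over $(\omega,s)$ of the conditional expectations stays bounded away from zero. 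Likewise, a ``conditional dominated convergence theorem'' holds for each fixed $s$ but does not by itself give the required uniformity in $(\omega,s)$. Concretely, after splitting
\begin{equation*}
1_{\{u_k>r\}}\nabla u_k-1_{\{u>r\}}\nabla u
=1_{\{u_k>r\}}\bigl(\nabla u_k-\nabla u\bigr)+\bigl(1_{\{u_k>r\}}-1_{\{u>r\}}\bigr)\nabla u,
\end{equation*}
the first term is controlled by $\|\nabla u_k-\nabla u\|_{2;\Pi_t}\to0$, but for the second you still need
$\esssup_{\omega}\sup_{s}E[\int_s^T\!\!\int_{\Pi}1_{D_k}|\nabla u|^2\,|\,\sF_s]\to0$ with $D_k$ the symmetric difference of the level sets (minus $\{u=r\}$), and your argument only delivers $E[\int 1_{D_k}|\nabla u|^2]\to0$. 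This is a real gap in the proposal; to be fair, the paper offers no argument for this step either, so you have surfaced a subtlety that the published proof simply elides, but as written your proof of the second assertion establishes convergence only in the weaker $W^{1,2}_{\sF}(\Pi_t)$-type norm, not in $\|\cdot\|_{\cV_2(\Pi_t)}$.
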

\begin{proof}
   It can be checked that $(u-r)^+\in\cV_{2}(\Pi_t )$. Since
   $$|(u-r)^+-(v-r)^+|\leq |u-v|,$$
   Then we have
   $$\|(u-r)^+(s+h)-(u-r)^+(s)\|_{L^2(\Pi)}\leq \|u(s+h)-u(s)\|_{L^2(\Pi)}, \quad \forall s,\, s+h\in [t,T].$$
   Hence, the continuity of $u$ implies that of $(u-r)^+$. The other assertions follow in a similar way. We complete our proof.
\end{proof}

  In contrast to the deterministic case,  the integrand of It\^o's stochastic integral is required to be adapted,
  and the technique of Steklov time average (see~\cite[page 100]{Lieberman}) finds difficulty in our stochastic situation.
  We directly establish some It\^o formula to get around the difficulty.

\begin{lem}\label{lem ito foumul}
Let $\phi:\bR\times\bR^n\times\bR\longrightarrow\bR$ be a continuous function
which is twice continuously differentiable such that
   $\phi'(t,x,0)=0$
 for any $(t,x)\in \bR\times\bR^n$ and
 there exists a constant $M\in (0,\infty)$ such that
 $$\sup_{(t,x)\in\bR^{n+1},s\in\bR\setminus \{0\}}
 \left\{\left|\phi''(t,x,s)\right|
  +\frac{1}{|s|}\sum_{i=1}^n \left|\partial_{x_i} \phi'(t,x,s)
   \right|
  +\frac{1}{s^2}\left| \partial_{t} \phi(t,x,s)
  -\partial_{t} \phi(t,x,0)  \right| \right\}<M,$$
  where $\phi'(t,x,s):=\partial_{s}\phi(t,x,s)$ and
 $\phi''(t,x,s):=\partial_{ss}\phi(t,x,s)$.
Assume that the equation
  \begin{equation}\label{eq bspde in ito formula}\begin{split}
    u(t,x)=\, &u(T,x)+\int_t^T\left(h^0(s,x)+\partial_{x_i}  h^i(s,x)\right)\, ds-\int_t^Tz^r(s,x)\, dW_s^r, \quad t\in[0,T]
    \end{split}
  \end{equation}
   holds in the weak sense of Definition \ref{definition of weak solution},
where $u(T)\in L^2(\Omega,\sF_T,L^2(\cO));\  h^i\in W^{2}_{\sF}(Q), i=0,1,\cdots,n;$ and $z\in W^{2}_{\sF}(Q)$. If $u\in
\dot{W}^{1,2}_{\sF}(Q)\cap S^2(L^2(\cO))$,
 we have almost surely
 \begin{equation}\label{eq ito lemma}
   \begin{split}
     &\int_{\cO}\phi(t,x,u(t,x))\, dx \\
     =&\int_{\cO}\phi(T,x,u(T,x))\, dx
     -\int_t^T\int_{\cO}\partial_{s}\phi(s,x,u(s,x))\,dxds \\
          &-\int_t^T\ll\phi'(s,\cdot,u(s)),\,z^r(s)\gg dW_s^r
     +\int_t^T \ll \phi'(s,\cdot,u(s)) ,\, h^0(s) \gg ds\\
     &-\int_t^T \ll \phi''(s,\cdot,u(s))\partial_{x_i} u(s)+
     \partial_{x_i} \phi'(s,\cdot,u(s)),\, h^i(s)\gg  ds              \\
     &-\frac{1}{2}\int_t^T\ll \phi''(s,\cdot,u(s)),\, |z(s)|^2\gg ds,~\forall t\in [0,T].
   \end{split}
 \end{equation}
\end{lem}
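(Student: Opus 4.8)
The plan is to establish the It\^o formula \eqref{eq ito lemma} in three stages: a mollification in the space variable, a time-discretization argument using the weak formulation, and a passage to the limit. The key analytic difficulty, as the authors themselves signal, is that the Steklov averaging device of the deterministic theory is unavailable: the time-average of an It\^o integral fails to be adapted, so we cannot directly differentiate in $t$. I would instead exploit Remark~\ref{rmk defn weak solution}, which already upgrades the weak formulation to allow test functions depending on time.

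\medskip
\textbf{Step 1 (Spatial mollification).} Fix a standard mollifier $\rho_\eps$ on $\bR^n$ and, for $x$ in the interior of $\cO$, set $u_\eps(s,x):=\ll \rho_\eps(x-\cdot),\,u(s,\cdot)\gg$. Since \eqref{eq in rmk defn weak sol} holds for test functions $\zeta\in C_c^{\infty}(\bR)\otimes C_c^{\infty}(\cO)$, applying it with $\zeta(s,y)=\psi(s)\rho_\eps(x-y)$ shows that for a.e.\ $x$ the process $s\mapsto u_\eps(s,x)$ is a continuous semimartingale of the form
\begin{equation*}
  u_\eps(t,x)=u_\eps(T,x)+\int_t^T\!\big(h^0*\rho_\eps(s,x)-\partial_{x_i}h^i*\rho_\eps(s,x)\big)\,ds-\int_t^T z^r*\rho_\eps(s,x)\,dW^r_s,
\end{equation*}
where the sign on the $h^i$-term comes from moving $\partial_{x_i}$ off $\rho_\eps$. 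Here the integrands are genuine adapted processes, continuous in $x$, and the stochastic integral makes classical sense.

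\medskip
\textbf{Step 2 (Finite-dimensional It\^o formula and spatial integration).} For each fixed $x$, apply the classical It\^o formula to $s\mapsto\phi(s,x,u_\eps(s,x))$ backward on $[t,T]$; the hypotheses on $\phi$ (boundedness of $\phi''$, of $|s|^{-1}\partial_{x_i}\phi'$, and of $s^{-2}(\partial_s\phi-\partial_s\phi|_{s=0})$, together with $\phi'(t,x,0)=0$, so that $|\phi'(t,x,s)|\le M|s|$ and $|\partial_s\phi(t,x,s)-\partial_s\phi(t,x,0)|\le M s^2$) guarantee all integrands below are dominated by quadratics in $u_\eps$, hence integrable in $(\omega,s,x)$ by $u\in S^2(L^2(\cO))$ and $h^i,z\in W^2_\sF(Q)$. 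Integrating the resulting identity over $\cO$ (restricted to the region where mollification is legitimate) and using a stochastic Fubini theorem for the $dW^r$-term yields
\begin{equation*}
  \begin{split}
    \int_{\cO}\phi(t,x,u_\eps(t,x))\,dx=&\int_{\cO}\phi(T,x,u_\eps(T,x))\,dx-\int_t^T\!\!\int_{\cO}\partial_s\phi(s,x,u_\eps)\,dxds\\
    &-\int_t^T\!\ll\phi'(s,\cdot,u_\eps),\,z^r*\rho_\eps(s)\gg dW^r_s+\int_t^T\!\ll\phi'(s,\cdot,u_\eps),\,h^0*\rho_\eps(s)\gg ds\\
    &+\int_t^T\!\ll\partial_{x_i}\!\big[\phi'(s,\cdot,u_\eps)\big],\,h^i*\rho_\eps(s)\gg ds-\tfrac12\int_t^T\!\ll\phi''(s,\cdot,u_\eps),\,|z*\rho_\eps(s)|^2\gg ds,
  \end{split}
\end{equation*}
where $\partial_{x_i}[\phi'(s,x,u_\eps)]=\partial_{x_i}\phi'(s,x,u_\eps)+\phi''(s,x,u_\eps)\partial_{x_i}u_\eps$ by the chain rule — this reproduces exactly the two terms paired against $h^i$ in \eqref{eq ito lemma}.

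\medskip
\textbf{Step 3 (Removing the mollification).} Let $\eps\downarrow0$. Using $u\in\dot W^{1,2}_\sF(Q)$ one has $u_\eps\to u$ and $\partial_{x_i}u_\eps=(\partial_{x_i}u)*\rho_\eps\to\partial_{x_i}u$ in $W^2_\sF(Q)$, while $h^i*\rho_\eps\to h^i$ and $z*\rho_\eps\to z$ in $W^2_\sF(Q)$; the boundary behavior $u(\cdot,\cdot)|_{\partial\cO}=0$ lets us ignore the thin collar near $\partial\cO$ in the limit. The structural bounds on $\phi$ give uniform Lipschitz-type control: $|\phi'(s,x,a)-\phi'(s,x,b)|\le M|a-b|$ and $|\phi''(s,x,a)-\phi''(s,x,b)|\le$ (bounded)$\cdot$, so each term converges after passing to an a.s.-convergent subsequence, and the It\^o-integral term converges in $L^2$ by It\^o's isometry. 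The main obstacle is precisely controlling the term $\ll\phi''(s,\cdot,u_\eps)\partial_{x_i}u_\eps,\,h^i*\rho_\eps(s)\gg$ in the limit, since it is trilinear and involves a product of two objects ($\partial_{x_i}u_\eps$ and $h^i*\rho_\eps$) that converge only in $L^2$: here I would use that $\phi''$ is bounded and continuous, apply dominated convergence to handle $\phi''(s,x,u_\eps)\to\phi''(s,x,u)$ a.e., and split $\partial_{x_i}u_\eps\cdot h^i*\rho_\eps - \partial_{x_i}u\cdot h^i$ into two pieces each estimated by Cauchy--Schwarz against one $L^2$-bounded and one $L^2$-null factor. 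Once all six terms pass to the limit we recover \eqref{eq ito lemma}, and a standard argument (the identity holds for a.e.\ $t$, both sides are a.s.\ continuous in $t$ by $u\in S^2(L^2(\cO))$ and continuity of the Lebesgue/It\^o integrals) upgrades it to hold for all $t\in[0,T]$ almost surely.
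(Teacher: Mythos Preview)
Your mollify--It\^o--integrate--limit scheme is exactly the skeleton of the paper's Step~1, so the broad strategy is right. There is a sign slip in your mollified equation (testing the weak form against $\rho_\eps(x-\cdot)$ gives $+\partial_{x_i}(h^i*\rho_\eps)$, not $-$), but that is cosmetic. The real gap is in your Step~2, where you silently replace
\[
\int_{\cO_\eps}\phi'(s,x,u_\eps)\,\partial_{x_i}(h^i*\rho_\eps)\,dx
\quad\text{by}\quad
-\int_{\cO_\eps}\partial_{x_i}\bigl[\phi'(s,x,u_\eps)\bigr]\,(h^i*\rho_\eps)\,dx.
\]
This integration by parts is over the \emph{interior} region $\cO_\eps$ (the mollified semimartingale identity only holds there), and the boundary integral over $\partial\cO_\eps$ does not vanish: $u_\eps$ is not zero on $\partial\cO_\eps$, only on $\partial\cO$. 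Your remark that ``the boundary behavior $u|_{\partial\cO}=0$ lets us ignore the thin collar'' does not handle this; the collar carries a genuine surface term paired against $h^i_\eps$, which is only in $L^2$ and has no good trace. Nor can you avoid the integration by parts and pass to the limit directly in the first display, since $\partial_{x_i}(h^i*\rho_\eps)$ does not converge when $h^i$ is merely in $W^2_{\sF}(Q)$.

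The paper resolves this with a two-stage argument that you are missing. In its Step~1 it assumes the extra regularity $h^i\in\dot W^{1,2}_{\sF}(Q)$, so that one may keep the term in the form $\ll\phi'(s,\cdot,u_k),\,\partial_{x_i}h^i_k\gg_{\cO^N}$, pass $k\to\infty$ and then $N\to\infty$ \emph{without} integrating by parts until one is on the full domain $\cO$, where $u|_{\partial\cO}=0$ kills the boundary term. Its Step~2 then treats the general case $h^i\in W^2_{\sF}(Q)$ by a separate approximation: one rewrites \eqref{eq bspde in ito formula} as the \emph{forward} SPDE
\[
\bar u(t)=u(0)+\int_0^t\bigl(\Delta\bar u+\partial_{x_i}\tilde h^i-h^0\bigr)\,ds+\int_0^t z^r\,dW^r_s,\qquad \tilde h^i=-\partial_{x_i}u-h^i,
\]
approximates $(h^i,h^0,z,u(0))$ by smooth compactly supported data, invokes the regularity theory of Denis~\cite{Denis2004} to produce solutions $u^k\in W^{2,2}_{\sF}(Q)$ with $u^k\to u$ in $W^{1,2}_{\sF}(Q)\cap S^2(L^2(\cO))$, applies Step~1 to each $u^k$, and passes to the limit. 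Without something playing the role of this second step, your argument does not close for $h^i$ that are only $L^2$ in space.
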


\begin{rmk}
    Lemma \ref{lem ito foumul} extends  It\^o formulas of \cite{DenisMatoussiStoica2005}
    and \cite{DaPrato1992} to our more general case where the test function $\phi$ is allowed to depend on both time and space variables.
    The extension is motivated by the subsequent study of the local maximum principle
    where It\^o formula for truncated solutions of BSDEs is required.
\end{rmk}

\begin{proof}[{\bf Proof of Lemma \ref{lem ito foumul}}]
  All the integrals in \eqref{eq ito lemma} are well defined.
   In particular, the stochastic integral
   $$I(t):=\int_0^t\ll \phi'(s,\cdot,u(s)),\, z^r(s)\gg dW_s^r, \quad t\in [0,T]$$ is a martingale
   since
  \begin{equation*}
    \begin{split}
      E\left[\sup_{t\in [0,T]}\left|I(t)\right|\right]\le
     &\, CE\left[\left(\int_0^T\Bigm |\ll |\phi'(s,\cdot,u(s))|,\,|z(s)|\gg \Bigm|^2ds\right)^{1/2}\right]\\
     \leq &
     \, CM \|u\|_{S^2(L^2(\cO))}\|z\|_{W^2_{\sF}(Q)}.
    \end{split}
  \end{equation*}
    We extend the random fields $u,h^0,h^1,\cdots,h^n$ and $z$ from their domain $\Omega\times [0,T]\times \cO$ to  $\Omega\times [0,T]\times \bR^n$
    by setting them all to be zero outside $\cO$, and we still use themselves to denote their respective extensions. Since $u$ satisfies the
    null Dirichlet condition on the lateral boundary and $\partial\cO\in C^1$, we have $u\in W^{1,2}_{\sF}([0,T]\times \bR^n)$. It is obvious that all the
    extensions $h^0,h^1,\cdots,h^n$ and $z$ lie in $W^2_{\sF}([0,T]\times \bR^n)$.

  \textbf{Step 1.}
  Consider $h^i\in \dot{W}^{1,2}_{\sF}(\cO)$, $i=1,2,\cdots,n$.
  Choose a sufficiently large positive integer $N_0$  so that $\{x\in\cO:dis(x,\partial\cO)>1/N_0\}$ is a nonempty sub-domain of $\cO$.
  For integer $N>N_0$, define
  $$\cO^N:=\{x\in\cO:dis(x,\partial\cO)>1/N\}.$$
  Let $\rho\in C_c^{\infty}(\bR^n)$ be a nonnegative function such that
  $$\textrm{supp} ({\rho})\subset B_1(0) \hbox{ \rm and } \int_{\bR^n}\rho(x)\, dx=1.$$
   Define for each positive integer $k$,
   $$\rho_k(x):=(2Nk)^n\rho(2Nkx),\quad u_k(s,x):=u(s)\ast \rho_k(x):=\int_{\bR^n}\rho_k(x-y)u(s,y)\,dy.$$
  In a similar way, we write
  $$z_k(s,x):=z(s)\ast\rho_k(x) \hbox{ \rm  and } h^i_k(s,x):=h^i(s)\ast\rho_k(x), \, \, i=1,2,\cdots,n.$$
  Then for each $x\in \cO^N$, we have almost surely
  $$u_k(t,x)=u_k(T,x)+\int_t^T\left(\partial_{x_i} h_k^i(s,x)+h_k^0(s,x)\right)\,ds-\int_t^Tz_k^r(s,x)\,dW_s^r, ~\forall t\in [0,T].   $$
 By using It\^o formula for each $x\in\cO^N$  and then integrating over $\cO^N$ with respect to $x$ , we obtain
  \begin{equation}\label{eq 2 in ito formula}
   \begin{split}
     &\int_{\cO^N}\phi(t,x,u_k(t,x))\, dx \\ =&\int_{\cO^N}\!\phi(T,x,u_k(T,x))\, dx
     -\int_t^T\!\!\!\int_{\cO^N}\partial_{s}\phi(s,x,u_k(s,x))\, dxds\\
     &
     +\int_t^T\ll \phi'(s,\cdot,u_k(s)),\ \, h^0(s)\gg_{\cO^N} ds
     \\
     &+\int_t^T\ll \phi'(s,\cdot,u_k(s)),\ \, \partial_{x_i} h_k^i(s)\gg_{\cO^N} ds              \\
     &-\frac{1}{2}\int_t^T \ll \phi''(s,\cdot,u_k(s)),\ \, |z_k(s)|^2\gg_{\cO^N} ds\\
     &-\int_t^T\ll \phi'(s,\cdot,u_k(s)),\ \, z_k^r(s)\gg_{\cO^N} dW_s^r.
   \end{split}
 \end{equation}

For the sake of convenience, we define
\begin{equation*}
  \begin{split}
    \delta\phi_k(t,x):=&\phi(t,x,u(t,x))-\phi(t,x,u_k(t,x))\\
    \delta u_k(t,x):=& u(t,x)-u_k(t,x).
  \end{split}
\end{equation*}
and in a similar way, we define $\delta \phi_k',\delta\phi_k'',\delta h_k^i$ and $\delta z_k^r$ $i=0,1,\cdots,n;r=1,\cdots,m$.

  Since for almost all $(\omega,s)\in\Omega\times[0,T]$
 \begin{equation*}
   \begin{split}
     &\|u_k(\omega,s)\|_{W^{1,2}(\bR^n)}\leq\|u(\omega,s)\|_{W^{1,2}(\bR^n)},
     ~\lim_{k\rightarrow\infty}\|\delta u_k(\omega,s)\|_{W^{1,2}(\bR^n)}\rightarrow 0;\\
     &\|h^0_k(\omega,s)\|_{L^2(\bR^n)}\leq\|h^0(\omega,s)\|_{L^2(\bR^n)},
     ~\lim_{k\rightarrow\infty}\|\delta h^0_k(\omega,s)\|_{L^{2}(\bR^n)}\rightarrow 0;\\
     &\|h^i_k(\omega,s)\|_{W^{1,2}(\bR^n)}\leq\|h^i(\omega,s)\|_{W^{1,2}(\bR^n)},
     ~\lim_{k\rightarrow\infty}\|\delta h^i_k(\omega,s)\|_{W^{1,2}(\bR^n)}\rightarrow 0,i=1,2,\cdots;\\
     &\|z_k(\omega,s)\|_{L^2(\bR^n)}\leq\|z(\omega,s)\|_{L^2(\bR^n)},
     ~\lim_{k\rightarrow\infty}\|\delta z_k(\omega,s)\|_{L^{2}(\bR^n)}\rightarrow 0,
   \end{split}
 \end{equation*}
 by Lebesgue's dominated convergence theorem, we have as $k\rightarrow \infty$
 \begin{equation*}
   \begin{split}
      &\sum_{i=1}^n\|\delta h^i_k(s)\|^2_{W_{\sF}^{1,2}([0,T]\times \bR^n)}+\|\delta h^0_k(s)\|^2_{W^2_{\sF}([0,T]\times \bR^n)} +\|\delta z_k(s)\|_{W_{\sF}^2([0,T]\times \bR^n)}^2\\
      &+\|\delta u_k(s)\|_{W_{\sF}^{1,2}([0,T]\times \bR^n)}^2\rightarrow 0,
   \end{split}
 \end{equation*}
\begin{equation*}
  \begin{split}
        &E\left[\int_0^T\!\!\!\int_{\cO}\left|\delta\phi_k(t,x)\right|\, dxdt\right]
        \leq
        E\left[\int_0^T M\ll |u_k(t)|+|u(t)|,\, |\delta u_k(t)|\gg dt\right]\rightarrow 0,
  \end{split}
\end{equation*}
\begin{equation*}
  \begin{split}
    &E\left[\int_0^T\!\!\!\int_{\bR^n}\big|\phi'(s,x,u_k(s,x))\partial_{x_i} h_k^i(s,x)
    -\phi'(s,x,u(s,x))\partial_{x_i} h^i(s,x)\big|\, dxds\right]\\
    \leq&
        \ E\bigg[\int_0^T\!\!\!\int_{\bR^n}\Big(M\big|\delta u_k(s,x)\partial_{x_i} h^i_k(s,x)\big| +M|u(s,x)|\left|\partial_{x_i} (\delta h^i_k)(s,x)\right|\Big)\, dxds
        \bigg]
        \rightarrow 0,\\
        &\ i=1,\cdots,n
  \end{split}
\end{equation*}
and
\begin{equation*}
    \begin{split}
         &E\left[\int_0^T\!\!\!\int_{\bR^n}|\phi'(s,x,u_k(s,x))h_k^0(s,x)-\phi'(s,x,u(s,x))h^0(s,x)|\, dxds\right]\\
    \leq&
        E\left[\int_0^T\!\!\!\int_{\bR^n}\big(M|\delta u_k(s,x)h^0_k(s,x)|
        +M|u(s,x)||\delta h^0_k(s,x)|\big)\, dxds\right]
        \rightarrow 0.
    \end{split}
\end{equation*}

Since the convergence
$$\lim_{k\rightarrow \infty}\|\delta u_k\|_{W^{1,2}_{\sF}([0,T]\times\bR^n)}=0$$
implies that $u_k(\omega,t,x)$ converges to $u(\omega,t,x)$ in measure $dP\otimes dt \otimes dx$,
from the dominated
convergence theorem we conclude that
$$
    \lim_{k\rightarrow\infty}
     E\left[\int_0^T\!\!\!\int_{\cO}\,|{\partial_{s}}\phi(s,x,u_{k}(s,x))\,
    -\partial_{s}\phi(s,x,u(s,x))|\,
        \,dxds\right]=0.
$$
In a similar way,  we obtain
\begin{equation*}
  \begin{split}
    E\left[\int_0^T\!\!\!\int_{\cO}\Big|\phi''(s,x,u(s,x))|z(s,x)|^2
    -\phi''(s,x,u_k(s,x))|z_k(s,x)|^2\Big|\,dxds\right]\rightarrow 0
  \end{split}
\end{equation*}
and
 \begin{equation*}
   \begin{split}
        &E\left[\sup_{t\in[0,T]}\left|
        \sum_{r=1}^m
        \int_t^T\!\!\!\int_{\bR^n}
        \left(
        \phi'(s,x,u_k(s,x))z^r_k(s,x)
        -\phi'(s,x,u(s,x))z^r(s,x)\right)\,dxdW^r_s   \right|   \right]\\
     \leq&
        \ CE\left[\left(\int_0^T\left|\ll \phi'(s,\cdot,u_k(s)),\ z_k(s)\gg_{\bR^n}
        -\ll \phi'(s,\cdot,u(s)),\ z(s)\gg_{\bR^n} \right|^2\,ds\right)^{1/2} \right]\\
     \leq&
        \ CE\biggl[\Bigl(\int_0^T\big( \|\delta u_k(s)\|_{L^2(\bR^n)}^2 \|z(s)\|^2_{L^2(\bR^n)}
        + \|\phi'(s,u_k(s))\|^2_{L^2(\bR^n)} \|\delta z_k(s)\|^2_{L^2(\bR^n)} \big)\,ds\Bigr)^{1/2} \biggr]\\
        &\longrightarrow 0\textrm{ as }k\rightarrow \infty.
   \end{split}
 \end{equation*}
Hence taking limits in $L^1(\Omega\times[0,T],\sP)$ as $k\rightarrow \infty$ on both sides of \eqref{eq 2 in ito formula} and noting the path-wise continuity of $u$, we have almost surely
 \begin{equation}\label{eq N in ito lemma}
   \begin{split}
     &\int_{\cO^N}\phi(t,x,u(t,x))\, dx             \\
     =&\int_{\cO^N}\phi(T,x,u(T,x))\, dx
     -\int_t^T\!\!\!\int_{\cO^N} \partial_{s}\phi(s,x,u(s,x)) \,dxds \\
     &+
     \int_t^T\ll \phi'(s,\cdot,u(s)),\ \, h^0(s) \gg_{\cO^N} ds
     \\
     &+\int_t^T\ll\phi'(s,\cdot,u(s)),\ \,\partial_{x_i} h^i(s)\gg_{\cO^N} ds              \\
     &-\frac{1}{2}\int_t^T \ll \phi''(s,\cdot,u(s)),\ \,|z(s)|^2\gg_{\cO^N}ds\\
     &-\int_t^T\ll \phi'(s,\cdot,u(s)),\ \, z^r(s)\gg_{\cO^N} dW_s^r,\quad \forall\  t\in[0,T].
   \end{split}
 \end{equation}
  Passing to the limit in $L^1(\Omega\times[0,T],\sP)$ by letting $N\rightarrow \infty$ on both sides of \eqref{eq N in ito lemma}, in view of  the path-wise continuity of $u$ and the integration-by-parts formula, we conclude
  \eqref{eq ito lemma}.

  \textbf{Step 2.}
  For the general $h^i\in W_{\sF}^2(Q)$, we choose  sequences $\{h^i_k\}$, $\{z^r_k\}$ and $\{u_k\}$ from $S^2(\bR)\otimes C_c^{\infty}(\cO)$ such that
  \begin{equation*}
    \begin{split}
      \lim_{k\rightarrow\infty}\bigg\{
      &\sum_{i=0}^n \|\delta h_k^i\|_{W^{2}_{\sF}(Q)} +\|\delta z_k\|_{W_{\sF}^2(Q)}
          +\|\delta u_k\|_{W^{1,2}_{\sF}(Q)} +\|\delta u_k(0)\|_{L^2(\cO)}   \bigg\}=0.
    \end{split}
  \end{equation*}
  Consider
  \begin{equation}\label{eq step2 in ito thm}
    \begin{split}
      \bar{u}(t,x)=\, &u(0,x)+\int_0^t\left(\Delta \bar{u}(s,x)+\partial_{x_i} \tilde{h}^i(s,x)-h^0(s,x)\right)\, ds\\
      &+\int_0^t z^r(s,x)\, dW^r_s, \quad t\in [0,T]
    \end{split}
  \end{equation}
  with
  $$\tilde{h}^i(s,x):=-\partial_{x_i}  u(s,x)-h^i(s,x).$$
    From Remark \ref{rmk defn weak solution} and \cite[Theorem 2.1]{Denis2004},  there are unique weak solutions $u\in \dot{W}^{1,2}_{\sF}(Q)\cap S^2(L^2(\cO))$
     to SPDE \eqref{eq step2 in ito thm}
   in the sense of \cite[Definition 1]{Denis2004} or equivalently \cite[Definition 4]{DenisMatoussiStoica2005}), and
    $u^k\in \dot{W}^{1,2}_{\sF}(Q)\cap S^2(L^2(\cO))$  to SPDE \eqref{eq step2 in ito thm}
    with $u(0,x)$, $z(s,x)$ and $\tilde{h}^i(s,x)$ being replaced by $u_k(0,x)$, $z_k(s,x)$ and
   $$\tilde{h}_k^i(s,x):=-\partial_{x_i}  u_k(s,x)-h_k^i(s,x), \quad k=1,2,\cdots.$$
   Then
   we deduce from \cite[Propositions 6 and 7, and Theorem 9]{Denis2004} that $u^k\in W^{2,2}_{\sF}(Q)\cap \dot{W}^{1,2}_{\sF}(Q)\cap S^2(L^2(\cO)) $ and
  \begin{equation}
  \begin{split}
  &\lim_{k\rightarrow\infty}\{\|u^k-u\|_{W_{\sF}^{1,2}(Q)}+\|u^k-u\|_{S^2(L^2(\cO))}\} \\
  \leq & \ C\lim_{k\rightarrow\infty}\{\|\delta u_k \|_{W_{\sF}^2(Q)}
  + \|\delta z_k\|_{W_{\sF}^2(Q)}
  +\|\delta u_k(0)\|_{L^2(\cO)}
  + \sum_{i=0}^n\|\delta h_k^i\|_{W_{\sF}^2(Q)}\}      \\
  =&\ 0
  \end{split}
  \end{equation}
  with the constant $C$ being independent of $k$.
  For each $k$, by \textbf{Step 1} we have
   \begin{equation*}
   \begin{split}
     &\int_{\cO}\phi(t,x,u^k(t,x))\, dx \\
     =&\int_{\cO}\phi(T,x,u^k(T,x))\, dx-\int_t^T\!\!\!\int_{\cO}\partial_{s}\phi(s,x,u^k(s,x))
     \,dxds\\
     &+\int_t^T\ll
     \phi'(s,\cdot,u^k(s)),\ \,h_k^0(s)\gg ds\\
     &+\int_t^T \ll
     \phi''(s,\cdot,u^k(s))\partial_{x_i} u^k(s)+
     \partial_{x_i} \phi'(s,\cdot,u^k(s)),\ \, \partial_{x_i} u^k(s)
                        \gg ds             \\
     &+\int_t^T\ll
     \phi''(s,\cdot,u^k(s))\partial_{x_i} u^k(s)+
     \partial_{x_i} \phi'(s,\cdot,u^k(s)),\ \,
     \tilde{h}_k^i(s)
                        \gg ds              \\
     &-\frac{1}{2}\int_t^T\ll
     \phi''(s,\cdot,u^k(s)),\, |z_k(s)|^2
                        \gg ds
     -\int_t^T\ll
     \phi'(s,\cdot,u^k(s)),\, z_k^r(s)
                        \gg dW_s^r,
   \end{split}
 \end{equation*}
  for all $t\in [0,T]$, $P$-a.s..
  By taking limits as $k\rightarrow \infty$,
  we complete our proof.
\end{proof}

\begin{rmk}\label{rmk ito formula 0}
  Let $\psi:\bR\times\bR^n\times\bR\longrightarrow\bR$ be a continuous function satisfying
  the assumptions on $\phi$ in Lemma \ref{lem ito foumul} except that for each $(t,y)$, $\psi''(t,y,s)$ may be not continuous with respect to $s$.
  Then if there exists a sequence $\{ \phi^k,k\in\bR \}$ of functions satisfying the  assumptions
  on $\phi$ in Lemma \ref{lem ito foumul}, such that
  $$\lim_{k\rightarrow \infty}\phi^k(t,y,s)=\psi(t,y,s) \hbox{ \rm  for each } (t,y,s)\in\bR\times\bR^n\times\bR,$$
   the assertion in
  Lemma \ref{lem ito foumul} still holds for  $\psi$.
\end{rmk}

 Rewritting  \eqref{eq bspde in ito formula} into
\begin{equation*}
  \begin{split}
    u(t,x)=u(0,x)+\int_0^t\left(\Delta u(s,x)+\partial_{x_i} \tilde{h}^i(s,x)-h^0(s,x)\right)\, ds+\int_0^tz^r(s,x)\, dW_s^r
  \end{split}
\end{equation*}
 with $$\tilde{h}^i(s,x):=-\partial_{x_i} u(s,x)-h^i(s,x),$$
   we obtain
\begin{lem}\label{rmk ito formula}
    Let all the assumptions on $\phi$ of Lemma \ref{lem ito foumul} be satisfied and \eqref{eq bspde in ito formula} hold in the weak sense of Definition \ref{definition of weak solution} with $u(T)\in L^2(\Omega,\sF_T,L^2(\cO))$, $z\in W^{2}_{\sF}(Q)$,  $h^i\in W^{2}_{\sF}(Q), i=1,\cdots,n$ and $h^0\in W^1_{\sF}(Q)$. We assume further that $\phi'(s,x,r)\leq M$ for any $(s,x,r)\in \bR\times\bR^n\times\bR$. If  $u\in\dot{W}^{1,2}_{\sF}(Q)\cap S^2(L^2(\cO))$, then \eqref{eq ito lemma} holds almost surely   for all $t\in[0,T]$.
\end{lem}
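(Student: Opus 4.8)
The plan is to deduce Lemma \ref{rmk ito formula} from Lemma \ref{lem ito foumul} by an approximation that replaces the low-regularity datum $h^0\in W^1_{\sF}(Q)$ by smooth data, the extra hypothesis $\phi'(s,x,r)\le M$ being precisely what compensates for the lost integrability of $h^0$ in the limiting argument. Since \eqref{eq bspde in ito formula} has already been recast as the forward SPDE \eqref{eq step2 in ito thm} with $\tilde h^i:=-\partial_{x_i}u-h^i\in W^2_{\sF}(Q)$ ($i=1,\dots,n$), the setting is identical to that of Lemma \ref{lem ito foumul} except that the zeroth-order source is merely $L^1$ in the space variable. First I would choose, exactly as in Step 2 of the proof of Lemma \ref{lem ito foumul}, approximating sequences $u^k\in W^{2,2}_{\sF}(Q)\cap\dot W^{1,2}_{\sF}(Q)\cap S^2(L^2(\cO))$ solving \eqref{eq step2 in ito thm} with smoothed data, together with $z_k\to z$ and $\tilde h^i_k\to\tilde h^i$ in $W^2_{\sF}(Q)$ and $u^k\to u$ in $W^{1,2}_{\sF}(Q)\cap S^2(L^2(\cO))$, but now requiring only $h^0_k\to h^0$ in $W^1_{\sF}(Q)$ with $\|h^0_k\|_{W^1_{\sF}(Q)}\le\|h^0\|_{W^1_{\sF}(Q)}$, and, along a subsequence, $h^0_k\to h^0$ and $u^k\to u$ for a.e.\ $(\omega,s,x)$ (the truncation $h^0_k:=(-k)\vee(h^0\wedge k)$ followed by spatial mollification and an $\omega$-cutoff does the job). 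Applying Lemma \ref{lem ito foumul} to each $u^k$, whose data satisfy all of its hypotheses, gives identity \eqref{eq ito lemma} for $(u^k,z_k,\{h^i_k\},h^0_k)$.

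Next I would let $k\to\infty$ in that identity. Every term except the zeroth-order source term $\int_t^T\ll\phi'(s,\cdot,u^k(s)),h^0_k(s)\gg ds$ converges verbatim as in the proof of Lemma \ref{lem ito foumul}: the boundary values via $u^k\to u$ in $S^2(L^2(\cO))$ (and the path-wise continuity of $u$); the time-derivative, Hessian and quadratic-variation terms via the uniform bounds $|\phi''|\le M$, $|s|^{-1}|\partial_{x_i}\phi'|\le M$, $s^{-2}|\partial_s\phi(\cdot,s)-\partial_s\phi(\cdot,0)|\le M$ together with the $W^{1,2}_{\sF}(Q)$ and $W^2_{\sF}(Q)$ convergences; and the stochastic integral via the Burkholder--Davis--Gundy estimate displayed at the start of the proof of Lemma \ref{lem ito foumul}. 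The whole matter therefore reduces to the convergence of the zeroth-order source term, after which letting the sequence of subdomains exhaust $\cO$ and using the null Dirichlet condition for $u$ with integration by parts (as in passing from \eqref{eq N in ito lemma} to \eqref{eq ito lemma}) finishes the proof.

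I expect this remaining convergence to be the main obstacle: it is the one place where $h^0\in W^1_{\sF}(Q)$ (rather than $W^2_{\sF}(Q)$) is felt, and where $\phi'\le M$ is genuinely used. The available tools are the two-sided pointwise bound $-M|r|\le\phi'(s,x,r)\le M$ — the upper bound being the new hypothesis, the lower one coming from $\phi'(s,x,0)=0$ and $|\phi''|\le M$, so that $|\phi'(s,x,r)|\le M(1\vee|r|)$ — together with the uniform embedding $u^k\in\cM^{\frac{2(n+2)}{n}}(Q)$ from Lemma \ref{lem emmbedding for space V}. Writing $\ll\phi'(s,\cdot,u^k(s)),h^0_k(s)\gg-\ll\phi'(s,\cdot,u(s)),h^0(s)\gg$ as $\ll\phi'(s,\cdot,u^k(s))-\phi'(s,\cdot,u(s)),h^0(s)\gg+\ll\phi'(s,\cdot,u^k(s)),h^0_k(s)-h^0(s)\gg$, one controls on each piece the part of the pairing over $\{|u^k|\vee|u|\le1\}$ by $M$ times an $L^1$-norm, and the part over the complement by bounding $|\phi'|$ by $M|u^k|$, $M|u|$ and invoking Hölder's inequality with the uniform $\cM^{\frac{2(n+2)}{n}}$-bound; dominated convergence in $(\omega,s)$ — using $u^k\to u$ and $h^0_k\to h^0$ both a.e.\ and in their norms — then removes both pieces. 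Reconciling the merely $L^1$-in-space datum $h^0$ with the only marginally integrable factor $\phi'(s,\cdot,u(s))$ is the delicate point of the whole argument, and is exactly the reason the extra hypothesis $\phi'\le M$ is imposed; everything else is a transcription of the proof of Lemma \ref{lem ito foumul}.
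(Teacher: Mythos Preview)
Your overall strategy—approximate $h^0\in W^1_{\sF}(Q)$ by $h^0_k\in W^2_{\sF}(Q)$, apply Lemma~\ref{lem ito foumul} to the resulting approximations, and pass to the limit—is precisely what the paper intends: the authors omit the proof and refer to Proposition~2 of Denis--Matoussi~\cite{DenisMatoussi2009}, replacing the It\^o formula used there by Lemma~\ref{lem ito foumul}. So in outline you match the paper.

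There is, however, a genuine gap in your treatment of the $h^0$-term. On the set $\{|u^k|\vee|u|>1\}$ you bound $|\phi'(\cdot,u^k)|$ by $M|u^k|$ and then invoke H\"older against the $\cM^{\frac{2(n+2)}{n}}$-bound; but $h^0$ lies only in $L^1$ in the space variable, so no conjugate exponent is available to pair with $L^{\frac{2(n+2)}{n}}$, and the argument does not close. (Lemma~\ref{lem emmbedding for space V} also requires $u\in\dot\cV_2(Q)$, i.e.\ a uniform-in-$\omega$ bound on $\sup_t\|u(t)\|_{L^2}$, whereas here one only has $u\in S^2(L^2(\cO))$.) The fix is simpler than your splitting: the extra hypothesis should be read as $|\phi'|\le M$---this is exactly how it is used in the proof of Lemma~\ref{lem ito for non null bdary condition}, where one first reduces to $\phi$ with bounded first derivative. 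Under $|\phi'|\le M$ the second piece of your decomposition is bounded by $M\|h^0_k-h^0\|_{W^1_{\sF}(Q)}\to0$, and the first piece tends to zero by dominated convergence with majorant $2M|h^0|\in L^1$, using only continuity of $\phi'$ and a.e.\ convergence $u^k\to u$; no H\"older, no embedding, no case-splitting are needed. A secondary point worth flagging: the continuous dependence $u^k\to u$ in $\dot W^{1,2}_{\sF}(Q)\cap S^2(L^2(\cO))$ when $h^0_k\to h^0$ only in $W^1_{\sF}(Q)$ is not covered by the $L^2$-based estimates of \cite{Denis2004} cited in Step~2 of Lemma~\ref{lem ito foumul}; you should either justify it separately or, more directly, rerun the mollification argument of Step~1 with $h^0\in L^1$ and the bound $|\phi'|\le M$ in hand.
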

     The proof is very similar to that of \cite[Proposition 2]{DenisMatoussi2009} and is omitted here. The only difference is that to prove Lemma \ref{rmk ito formula} we use Lemma \ref{lem ito foumul} instead of \cite[Lemma 7]{DenisMatoussiStoica2005}.

Through a standard procedure  we obtain by Lemma \ref{lem ito foumul} the following

\begin{lem}\label{lem ito for non null bdary condition}
Let all the assumptions on $\phi$ of Lemma \ref{lem ito foumul} be satisfied. If the function $u$ in   \eqref{eq bspde in ito formula}
belongs to $ W^{1,2}_{\sF}(Q)\cap S^2(L^2(Q))$ with $u^+\in \dot{W}^{1,2}_{\sF}(Q)$, we have almost surely
\begin{equation}
  \begin{split}
    &\int_{\cO}\phi(t,x,u^+(t,x))\, dx+\frac{1}{2}\int_t^T\ll\phi''(s,\cdot,u^+(s)),\,|z^u(s)|^2\gg\, ds\\
    =&
    \int_{\cO}\phi(T,x,u^+(T,x))\, dx-\int_t^T\!\!\!\int_{\cO}\partial_{s}\phi(s,x,u^+(s,x))\, dxds\\
    &+\int_t^T\ll\phi(s,\cdot,u^+(s)),\, h^{0,u}(s)\gg\, ds               \\
    &-\int_{t}^T\ll\phi''(s,\cdot,u^+(s))\partial_{x_i} u^+(s)
    +\partial_{x_i} \phi'(s,\cdot,u^+(s)),\quad h^{i,u}(s)\gg\, ds\\
    &-\int_t^T\ll\phi'(s,\cdot, u^+(s)),\,\ z^{r,u}(s)\gg\, dW_s^r,\,\quad  t\in [0,T]
  \end{split}
\end{equation}
with
  $$h^{i,u}:=1_{\{u>0\}}h^i,\quad i=0,1,\cdots,n$$
and $$ z^{r,u}=1_{\{u>0\}}z^r,\quad r=1,\cdots,m; \quad z^u:=(z^{1,u}, \cdots, z^{m,u}).$$
\end{lem}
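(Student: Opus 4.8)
The plan is to deduce the identity from Lemma~\ref{lem ito foumul} --- in the extended form of Remark~\ref{rmk ito formula 0} --- applied to the composite time-space function
$$\Psi(t,x,r):=\phi(t,x,r^+),\qquad (t,x,r)\in\bR\times\bR^n\times\bR.$$
First one checks that $\Psi$ satisfies all the hypotheses imposed on $\phi$ in Lemma~\ref{lem ito foumul}, with the same constant $M$, except for the continuity of $\Psi''(t,x,r)=\phi''(t,x,r^+)1_{\{r>0\}}$ at $r=0$: indeed $\Psi\in C^1$, $\Psi'(t,x,0)=\phi'(t,x,0)=0$, and for $r<0$ each of $\Psi''$, $\partial_{x_i}\Psi'$ and $\partial_t\Psi(t,x,r)-\partial_t\Psi(t,x,0)$ vanishes while for $r>0$ it coincides with the corresponding quantity for $\phi$ evaluated at $r^+$. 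Hence $\Psi$ is covered by Remark~\ref{rmk ito formula 0}, the approximating sequence being $\phi^k(t,x,r):=\phi(t,x,\theta_k(r))$ with $\theta_k(r):=k^{-1}\theta(kr)$, where $\theta\in C^2(\bR)$ is a fixed convex function with $\theta\equiv 0$ on $(-\infty,0]$, $0\le\theta'\le 1$ and $\theta(s)=s-c$ for $s\ge 1$ (some $c\in[0,1)$); using $|\phi'(t,x,s)|\le M|s|$ and $|\partial_t\phi(t,x,s)-\partial_t\phi(t,x,0)|\le Ms^2$ one sees that every $\phi^k$ obeys the hypotheses of Lemma~\ref{lem ito foumul} with a constant independent of $k$, while $\phi^k\to\Psi$ pointwise. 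Since $\partial_{x_i}u^+=1_{\{u>0\}}\partial_{x_i}u$ a.e., one has $\Psi(s,\cdot,u(s))=\phi(s,\cdot,u^+(s))$, $\partial_s\Psi(s,\cdot,u(s))=\partial_s\phi(s,\cdot,u^+(s))$, $\Psi'(s,\cdot,u(s))=1_{\{u>0\}}\phi'(s,\cdot,u^+(s))$, $\Psi''(s,\cdot,u(s))\partial_{x_i}u(s)=\phi''(s,\cdot,u^+(s))\partial_{x_i}u^+(s)$, and $\partial_{x_i}\Psi'(s,\cdot,u(s))=1_{\{u>0\}}\partial_{x_i}\phi'(s,\cdot,u^+(s))$; absorbing each indicator $1_{\{u>0\}}$ into the neighbouring $h^i$ (resp.\ $z^r$) turns it into $h^{i,u}$ (resp.\ $z^{r,u}$), so that the It\^o formula of Lemma~\ref{lem ito foumul}/Remark~\ref{rmk ito formula 0} for the pair $(\Psi,u)$ is precisely the identity asserted in Lemma~\ref{lem ito for non null bdary condition}.

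The only hypothesis of Lemma~\ref{lem ito foumul} not granted here is $u\in\dot{W}^{1,2}_{\sF}(Q)$: we have merely $u\in W^{1,2}_{\sF}(Q)\cap S^2(L^2(\cO))$ together with $u^+\in\dot{W}^{1,2}_{\sF}(Q)$. So the remaining work is to re-run the proof of Lemma~\ref{lem ito foumul} with $\phi^k$ in place of $\phi$ and verify that this hypothesis may be weakened to $u^+\in\dot{W}^{1,2}_{\sF}(Q)$. There the null-Dirichlet condition on $u$ was used only (i) to extend $u$ by zero and mollify it over $\bR^n$, and (ii) to discard the boundary integral over $\partial\cO$ produced by the integration by parts after letting $\cO^N\uparrow\cO$, where $\cO^N:=\{x:dis(x,\partial\cO)>1/N\}$. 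For (i): the spatial mollification $u_k:=u*\rho_k$ and the pointwise It\^o formula are only needed on the interior cylinders over $\cO^N$, where $u_k\to u$ in $W^{1,2}(\cO^N)$ already because $u\in W^{1,2}_{\sF}(Q)$, the subsequent limits being taken on each $\cO^N$ separately. For (ii): the boundary integral carries the factor $(\phi^k)'(s,\cdot,u(s))=\phi'\bigl(s,\cdot,\theta_k(u^+(s))\bigr)\,\theta_k'(u(s))$, and this still has null trace on $\partial\cO\in C^1$, because $\theta_k(u^+(s,\cdot))\in W^{1,2}(\cO)$ vanishes on $\partial\cO$ --- being the image of $u^+(s,\cdot)$ under the Lipschitz map $\theta_k$ with $\theta_k(0)=0$ --- and $\phi'(s,\cdot,0)=0$; hence the boundary integral still vanishes. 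The regularization of the coefficients $h^i$ from $\dot{W}^{1,2}_{\sF}(\cO)$ to general $W^2_{\sF}(Q)$ (Step~2 of Lemma~\ref{lem ito foumul}) goes through once one reconstructs $u$ as the solution of the associated forward SPDE with its own (possibly non-zero) lateral trace as Dirichlet datum and approximates the interior data; then $k\to\infty$ via Remark~\ref{rmk ito formula 0} completes the proof.

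The rest is bookkeeping. The function $u^+$ lies in $S^2(L^2(\cO))$ with almost surely continuous trajectories because $\|u^+(t)-u^+(s)\|_{L^2(\cO)}\le\|u(t)-u(s)\|_{L^2(\cO)}$, and the bounds $|\phi'(s,x,u^+)|\le Mu^+$, $|\phi''|\le M$, $|\partial_{x_i}\phi'(s,x,u^+)|\le Mu^+$ and $|\partial_s\phi(s,x,u^+)-\partial_s\phi(s,x,0)|\le M(u^+)^2$, together with $u^+\in S^2(L^2(\cO))$, $\nabla u^+\in W^2_{\sF}(Q)$ and $h^i,z\in W^2_{\sF}(Q)$, make every term of the asserted identity finite and legitimise the passages to the limit (mollification, $N\to\infty$, $k\to\infty$) by dominated convergence exactly as in the proof of Lemma~\ref{lem ito foumul}. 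The step I expect to be the real obstacle is (ii): verifying the cancellation of the boundary term, i.e.\ that $(\phi^k)'(s,\cdot,u(s))$ has vanishing trace on $\partial\cO$, since that is the one point at which the weaker hypothesis $u^+\in\dot{W}^{1,2}_{\sF}(Q)$ must do the work formerly done by $u\in\dot{W}^{1,2}_{\sF}(Q)$ in Lemma~\ref{lem ito foumul}; the transcription of everything else in that proof, though lengthy, is mechanical.
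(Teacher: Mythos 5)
Your overall strategy---apply the It\^o formula of Lemma~\ref{lem ito foumul}/Remark~\ref{rmk ito formula 0} to a smooth approximation of $\phi(t,x,r^+)$ and let the approximation parameter tend to infinity---is in spirit the same as the paper's (the paper's $\psi_k$ plays exactly the role of your $\theta_k$), and your bookkeeping of the limit identity (the indicators $1_{\{u>0\}}$ producing $h^{i,u}$, $z^{r,u}$, and $\partial_{x_i}u^+=1_{\{u>0\}}\partial_{x_i}u$) is correct. The problem is the step you relegate to one sentence: re-proving Lemma~\ref{lem ito foumul} for $u\in W^{1,2}_{\sF}(Q)$ without the null Dirichlet condition. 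Your analysis of where that condition enters is incomplete. Step~2 of the proof of Lemma~\ref{lem ito foumul} (general $h^i\in W^2_{\sF}(Q)$, which is the case needed here) does not merely integrate by parts; it identifies $u$ with the solution of the auxiliary forward SPDE \eqref{eq step2 in ito thm} under the \emph{null} Dirichlet condition and then invokes the existence, uniqueness, $W^{2,2}$-regularity and stability results of \cite{Denis2004} for that boundary-value problem. For a $u$ whose lateral trace is merely nonpositive, this identification fails, and your proposed repair---``reconstruct $u$ as the solution of the associated forward SPDE with its own (possibly non-zero) lateral trace as Dirichlet datum''---is not supported by anything cited in the paper: solvability and interior $W^{2,2}$ approximation for forward SPDEs with a general random, time-dependent, non-zero Dirichlet datum (only an $H^{1/2}(\partial\cO)$-valued trace of a $W^{1,2}$ field) is itself a substantial open task, not a transcription. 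So the gap is genuine.

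The paper's proof circumvents this entirely by a localization you do not use. It first multiplies by $\varphi\in C_c^{\infty}(\cO)$: the field $\mathscr{V}=\varphi u$ lies in $\dot{W}^{1,2}_{\sF}(Q)$ for free, so the full Lemma~\ref{lem ito foumul} (both steps) applies to $\psi_k(\mathscr{V})$ with no modification. Choosing $\varphi\equiv 1$ on a neighbourhood of the support of the test function removes the localization and yields that $\psi_k(u)$ itself solves a BSPDE in the weak sense (equation \eqref{Eq step1}); since $\psi_k$ vanishes on $(-\infty,1/k)$ and $u^+\in\dot{W}^{1,2}_{\sF}(Q)$, the field $\psi_k(u)=\psi_k(u^+)$ \emph{does} satisfy the null Dirichlet condition, so Lemma~\ref{rmk ito formula} (needed because the new zeroth-order coefficient $\psi_k''(u)\partial_{x_i}u\,h^i+\tfrac12\psi_k''(u)|z|^2$ is only in $W^1_{\sF}(Q)$) applies to $\phi(t,x,\psi_k(u))$, and $k\to\infty$ finishes, the $\psi_k''$-terms being killed by the bound $|\phi'(s,x,\psi_k(u))\psi_k''(u)|\le M\,1_{[1/k,2/k]}(u)$. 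In short: instead of weakening the hypotheses of Lemma~\ref{lem ito foumul}, the paper manufactures objects ($\varphi u$, then $\psi_k(u)$) to which the lemma applies as stated. If you want to complete your route, you should import this localization device rather than attempt a boundary-value theory for the forward SPDE.
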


\begin{rmk} Note that the assumption $u^+\in \dot{W}_{\sF}^{1,2}(Q)$ does not imply that $u$ vanishes in a generalized sense on the boundary $\partial\cO$ and therefore
Lemma~\ref{lem ito foumul} can not be applied directly to get the corresponding equation~\eqref{eq bspde in ito formula} for $u^+$.
\end{rmk}

\begin{proof}[{\bf Sketch of the proof}]
\textbf{Step 1.} For $k\in\bN$, define
\begin{equation}\label{*}
  \psi(s)=\psi_k(s):=
  \left\{\begin{array}{l}
  \begin{split}
    0,\quad &s\in(-\infty,\frac{1}{k});\\
    \frac{k}{2}(s-\frac{1}{k})^2,\quad &s\in[\frac{1}{k},\frac{2}{k}];\\
    s-\frac{3}{2k},\quad &s\in (\frac{2}{k},+\infty).
    \end{split}
  \end{array}\right.
 \end{equation}
Then the assumptions on $u^+$ imply that $\psi(u)\in\dot{W}_{\sF}^{1,2}(Q)$.

Take $\varphi\in C_c^{\infty}(\cO)$ and set $\mathscr{V}:=\varphi
u$. Then $\mathscr{V}\in\dot{W}_{\sF}^{1,2}(Q)$. Since   \eqref{eq bspde in ito formula} holds in the weak sense of Definition \ref{definition of weak solution}, we have almost surely for any $\xi\in C_c^{\infty}(\cO)$
\begin{equation*}
  \begin{split}
    &\ll \xi,\ \varphi u(t) \gg\\
    =\ &\ll \xi,\ \varphi u(T)  \gg+\int_t^T\ll\xi,\ \varphi h^0(s)-\partial_{x_i}\varphi h^i(s)\gg ds   \\
    &-\int_t^T\ll\partial_{x_i}\xi,\ \varphi h^i(s) \gg ds
    -\int_t^T\ll \xi,\ \varphi z^r(s)\gg dW_s^r,\ \ \forall t\in[0,T].
  \end{split}
\end{equation*}
Hence, there holds
\begin{equation*}
  \begin{split}
    \mathscr{V}(t,x)=&\mathscr{V}(T,x)+\int_t^T\left[\varphi(x)h^0(s,x)
    -\partial_{x_i}\varphi(x) h^i(s,x)
    +\partial_{x_i}\left(\varphi (x)h^i(s,x)\right)\right]ds\\
    &-\int_t^T\varphi(x)z^r(s,x)dW^r_s,\quad t\in [0,T]
  \end{split}
\end{equation*}
in the weak sense of Definition \ref{definition of weak solution}.

For $\tilde{\varphi}\in C_c^{\infty}(O)$, by Lemma \ref{lem ito foumul} and Remark
\ref{rmk ito formula 0} we have almost surely
\begin{equation}\label{eq 1 in lem non null bdary ito}
  \begin{split}
    &\ll \psi(\mathscr{V}(t)),\ \ \tilde{\varphi}\gg
    +\frac{1}{2}\int_t^T\ll \psi''(\mathscr{V}(s))\tilde{\varphi},\quad |\varphi z(s)|^2\gg \, ds\\
    =&
    \ll \psi(\mathscr{V}(T)),\ \ \tilde{\varphi}\gg
    +\int_t^T\ll \psi'(\mathscr{V}(s))\tilde{\varphi},\quad \varphi h^0(s)\gg \, ds\\
    &
    -\int_t^T \ll \partial_{x_i} (\tilde{\varphi}\psi'(\mathscr{V}(s))\varphi),\ \, h^i(s)\gg ds\\
    &-\int_t^T\ll \psi'(\mathscr{V}(s))\tilde{\varphi},\ \, \varphi z^r(s)\gg \, dW^r_s,\quad \forall t\in[0,T].
  \end{split}
\end{equation}
Choosing $\varphi$ such that $\varphi\equiv 1$ in an open subset $\cO'\Subset \cO$ (i.e., $\overline{\cO'}\subset \cO$ ) and
$supp(\tilde{\varphi})\subset\cO'$, we have almost surely
\begin{equation*}
  \begin{split}
    &\ll\tilde{\varphi},\ \psi(u(t))\gg +\frac{1}{2}\int_t^T\ll \tilde{\varphi},\quad \psi''(u(s))|z(s)|^2\gg \, ds\\
    =&
    \ll \tilde{\varphi},\,\psi(u(T))\gg
    +\int_t^T\ll \tilde{\varphi},\ \ \psi'(u(s))h^0(s)\gg \, ds\\
    &
    -\int_t^T\ll \partial_{x_i} (\tilde{\varphi}\psi'(u(s))),
    \quad h^i(s)\gg \, ds-\int_t^T\ll \tilde{\varphi},\ \ \psi'(u(s))z^r(s)\gg\, dW^r_s \\
    =&
    \ll\tilde{\varphi} ,\,\psi(u(T)) \gg
    +\int_t^T\ll \tilde{\varphi},\ \ \psi'(u(s))h^0(s)\gg \, ds\\
    &-\int_t^T\ll \tilde{\varphi},\ \ \psi'(u(s))z^r(s)\gg\, dW^r_s -\int_t^T\ll \partial_{x_i} \tilde{\varphi},\quad \psi'(u(s))
    h^i(s)\gg \, ds\\
    & -\int_t^T\ll \tilde{\varphi},\ \ \psi''(u(s))\partial_{x_i} u(s)
    h^i(s)\gg \, ds,\quad \forall t\in [0,T].
  \end{split}
\end{equation*}
Since $\tilde{\varphi}$ is arbitrary, we have
\begin{equation}\label{Eq step1}
  \begin{split}
    \psi(u(t,x))=
    &\psi(u(T,x))+\int_t^T \!\!\psi'(u(s,x))h^0(s,x)\, ds
    -\frac{1}{2}\int_t^T\!\!\psi''(u(s,x))|z(s,x)|^2\, ds\\
    &
    -\int_t^T\psi'(u(s,x))z^r(s,x)\, dW^r_s-\int_t^T  \psi''(u(s,x))\partial_{x_i} u(s,x)
    h^i(s,x)\, ds\\
    &+\int_t^T  \partial_{x_i}
    (\psi'(u(s,x))h^i(s,x))\, ds
  \end{split}
\end{equation}
holds in the weak sense of Definition \ref{definition of weak solution}.

\textbf{Step 2.} It is sufficient to prove this lemma for test functions $\phi$ of bounded first and second derivatives. Since \eqref{Eq step1} holds for $\psi=\psi_k$, $k=1,2,\cdots$, in view of Lemma \ref{rmk ito formula} we obtain
\begin{equation*}
  \begin{split}
    &\int_{\cO}\phi(t,x,\psi_k(u(t,x)))\, dx+\frac{1}{2}\int_t^T\ll \phi''(s,\cdot,\psi_k(u(s))),\ \ |\psi_k'(u(s))z^u(s)|^2\gg \, ds\\
    =&
    \int_{\cO}\phi(T,x,\psi_k(u(T,x)))\, dx
    +\int_t^T\ll \phi'(s,\cdot,\psi_k(u(s))),\ \ \psi_k'(u(s))h^{0,u}(s)\gg \, ds               \\
    &-\int_t^T\!\!\!\int_{\cO}\partial_{s}\phi(s,x,\psi_k(u(s,x)))\, dxds
    -\frac{1}{2}\int_t^T \ll \phi'(s,\cdot,\psi_k(u(s))),\ \ \psi_k''(u(s))|z(s)|^2      \gg \, ds       \\
    &-\int_{t}^T \ll \phi'(s,\cdot,\psi_k(u(s))),\quad \psi_k''(u(s))\partial_{x_i} u(s)
    h^i(s)  \gg \, ds \\
    &-\int_{t}^T\ll \phi''(s,\cdot,\psi_k(u(s)))\psi_k'(u(s))\partial_{x_i} u(s)\\
    &~~~~~~~~~~~~~~~~~
    +\partial_{x_i} \phi'(s,\cdot,\psi_k(u(s))),\quad \psi_k'(u(s))h^{i,u}(s)\gg \, ds\\
    &
    -\int_t^T\ll \phi'(s,\cdot,\psi_k(u(s))),\quad \psi_k'(u(s))z^{r,u}(s)\gg\, dW_s^r
  \end{split}
\end{equation*}
holds almost surely for all $t\in[0,T]$.  From properties of $\phi$, we have $\phi'(t,x,r)\leq M |r|$ for any
$(t,x,r)\in[0,T]\times\cO\times\bR$. It follows that for any $(s,x)\in [0,T]\times\cO$,
\begin{equation}
    \begin{split}
       \left|\phi'(s,x,\psi_k(u(s,x)))\psi_k''(u(s,x))\right|
    \leq&
         \ M\left|\psi_k(u(s,x))\right|
         \left| \psi_k''(u(s))   \right|
    \\
    =&
        \ \frac{Mk}{2} \left|u(s,x)-\frac{1}{k}\right|^2 k 1_{[\frac{1}{k},\frac{2}{k}]}(u(s,x))
    \\
    \leq&
         \ M 1_{[\frac{1}{k},\frac{2}{k}]}(u(s,x)).
    \end{split}
\end{equation}
On the other hand, we check that $\lim_{k\rightarrow \infty}\|\psi_k(u)-u^+\|_{W^{1,2}_{\sF}(Q)}=0$.
 Therefore, by the dominated convergence theorem and taking limits in $L^1([0,T]\times\Omega,\sP,\bR)$
 on both sides of the above equation, we prove our assertion.
\end{proof}

\section{Solvability of Equation (\ref{1.1})}
Before the solvability of equation \eqref{1.1}, we give a useful lemma which is borrowed from  \cite[Corollary B1]{DuffieEpsteinSDU92} and called the stochastic Gronwall-Bellman inequality.
\begin{lem}\label{lem sto Gronwall-bellman}
 Let
$(\Omega,\mathcal{F},\mathbb{F},P)$ be a filtered probability space whose
filtration $\mathbb{F}=\{ \mathcal{F}_t:t\in [0,T] \}$ satisfies the usual
conditions. Suppose $\{ Y_s\}$ and $\{ X_s\}$ are optional integrable
processes and $\alpha$ is a nonnegative constant. If for all $t$,
$s\rightarrow E[Y_s|\mathcal{F}_t]$ is continuous almost surely and
$Y_t\leq(\geq) E[\int_t^T(X_s+\alpha Y_s)ds|\mathcal{F}_t]+Y_T$, then for
all $t$,
$$ Y_t\leq (\geq)e^{\alpha(T-t)}E[Y_T|\mathcal{F}_t]+E\left[ \int_t^Te^{\alpha (s-t)}
 X_s ds|\mathcal{F}_t \right]\quad a.s..   $$
\end{lem}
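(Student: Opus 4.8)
This is the stochastic Gronwall--Bellman inequality of \cite[Corollary B1]{DuffieEpsteinSDU92}; the plan is to reduce it, by conditioning at a fixed time, to the classical deterministic Gronwall argument run pathwise. I describe the ``$\le$'' case; the ``$\ge$'' case follows by reversing every inequality.

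First I would fix $t_0\in[0,T]$ and condition the hypothesis, taken at a time $t\in[t_0,T]$, on $\mathcal{F}_{t_0}$. Setting $g(s):=E[Y_s\,|\,\mathcal{F}_{t_0}]$ and $h(s):=E[X_s\,|\,\mathcal{F}_{t_0}]$ for $s\in[t_0,T]$, the tower property together with the conditional Fubini theorem (applicable because the optional processes $X,Y$ are progressively measurable and integrable) turns the hypothesis into
\[
g(t)\;\le\;\int_t^T\bigl(h(s)+\alpha\,g(s)\bigr)\,ds+g(T),\qquad t\in[t_0,T],
\]
now a relation between genuine real-valued functions of $t$, valid for a.e.\ $\omega$. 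By assumption $s\mapsto g(s)$ admits an a.s.\ continuous version, and conditional Jensen plus integrability of $X$ give $\int_{t_0}^T|h(s)|\,ds<\infty$ a.s.

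Next I would run the integrating-factor argument. Put $G(t):=\int_t^T(h(s)+\alpha g(s))\,ds+g(T)$, an absolutely continuous function on $[t_0,T]$ with $G'(t)=-(h(t)+\alpha g(t))$ a.e., satisfying $g\le G$ and $g(T)=G(T)$. Since $\alpha\ge 0$, the bound $g\le G$ gives $\frac{d}{dt}\bigl(e^{\alpha t}G(t)\bigr)\ge -e^{\alpha t}h(t)$ a.e.; integrating from $t$ to $T$ and using $G(T)=g(T)$ yields
\[
G(t)\;\le\;e^{\alpha(T-t)}g(T)+\int_t^T e^{\alpha(s-t)}h(s)\,ds,\qquad t\in[t_0,T].
\]
Combining with $g\le G$, specializing to $t=t_0$ (so that $g(t_0)=E[Y_{t_0}\,|\,\mathcal{F}_{t_0}]=Y_{t_0}$, $Y$ being adapted), and using conditional Fubini once more to identify $\int_{t_0}^T e^{\alpha(s-t_0)}h(s)\,ds$ with $E\big[\int_{t_0}^T e^{\alpha(s-t_0)}X_s\,ds\,\big|\,\mathcal{F}_{t_0}\big]$, delivers precisely the claimed inequality at time $t_0$; since $t_0$ is arbitrary, the lemma follows.

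The analytic content here is minimal; the obstacles are purely measure-theoretic: justifying the conditional Fubini interchanges, choosing versions of $g$ and $h$ so that the integral inequality and the ensuing calculus identities hold on one common full-measure event (this is where the assumed a.s.\ continuity of $s\mapsto E[Y_s\,|\,\mathcal{F}_t]$ is used, to make $G$ an honest absolutely continuous function), and assembling the conclusion for all $t_0$ from a countable dense set by continuity in $t_0$. I would note in passing that the integrating-factor step can be replaced by iterating the hypothesis and summing the resulting series to $e^{\alpha(T-t)}$, but the version above is shorter.
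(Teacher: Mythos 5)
Your argument is correct, but note that the paper does not actually prove this lemma: it is imported verbatim as \cite[Corollary B1]{DuffieEpsteinSDU92}, so there is no in-paper proof to compare against. What you supply is the standard self-contained derivation: freeze $t_0$, project the hypothesis onto $\mathcal{F}_{t_0}$ via the tower property and conditional Fubini to obtain a pathwise integral inequality $g(t)\le \int_t^T(h+\alpha g)\,ds+g(T)$ for $g(s)=E[Y_s|\mathcal{F}_{t_0}]$, and then run the deterministic backward Gronwall with integrating factor $e^{\alpha t}$. The mechanics check out: $G'=-(h+\alpha g)$ a.e., $g\le G$ and $\alpha\ge 0$ give $\frac{d}{dt}(e^{\alpha t}G)\ge -e^{\alpha t}h$, and evaluating at $t=t_0$ with $g(t_0)=Y_{t_0}$ (optional processes being adapted) recovers the stated bound; the reversed case is symmetric. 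You also correctly identify the only delicate point, namely that the conditioned inequality holds for each $t$ only up to a $t$-dependent null set, and that the assumed a.s.\ continuity of $s\mapsto E[Y_s|\mathcal{F}_{t_0}]$ (together with a.s.\ finiteness of $\int_{t_0}^T|h|\,ds$) is what lets you upgrade it, via a countable dense set of $t$'s, to a single full-measure event on which the calculus argument is legitimate. One small remark: the lemma as printed places $Y_T$ outside the conditional expectation, which is presumably a typo for $E[\cdots+Y_T\,|\,\mathcal{F}_t]$; your proof is insensitive to this since conditioning on $\mathcal{F}_{t_0}$ turns either reading into the same function $g(T)$.
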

\begin{thm}\label{thm solve (1.1)}
  Let assumptions $({\mathcal A}1)$--$({\mathcal A}3)$ be satisfied and $\left\{h^i,i=0,1,\cdots,n\right\}\subset \cM^2(Q)$.  Then  $\dot{\sU}\times\dot{\sV}(G,f+h,g+h^0)$ (with $h=(h^1,\cdots,h^n)$) admits one and only one element $(u,v)$ which satisfies
   the following estimate 
    \begin{equation}\label{estimates in thm general existence}
      \begin{split}
        \|u\|_{\cV_2(Q)}+\|v\|_{\cM^2(Q)}\leq C \left\{\|G\|_{L^{\infty}(\Omega,\sF_T,L^2(\cO))}
        +A_p(f_0,g_0)+H_2(h,h^0)  \right\},
      \end{split}
    \end{equation}
    where $C$ is a constant depending on $n,p,q,\kappa,\lambda,\beta,\varrho,\Lambda_0,T,|\cO|$ and $L$.
\end{thm}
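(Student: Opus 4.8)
The plan is to prove existence by approximation from the already-known semilinear case (assumptions $({\mathcal A}1),({\mathcal A}2),({\mathcal A}3)_0$), and to obtain the a priori estimate \eqref{estimates in thm general existence} by testing the equation against the solution itself via the It\^o formula of Lemma \ref{lem ito for non null bdary condition} (or a version for $u^2$) combined with the embedding Lemma \ref{lem emmbedding for space V} and the stochastic Gronwall--Bellman inequality Lemma \ref{lem sto Gronwall-bellman}. First I would establish the a priori estimate: assuming $(u,v)$ is a weak solution with the null Dirichlet condition, apply the generalized It\^o formula to $\phi(s,x,u)=u^2$ (justified by approximating this $\phi$ by the admissible truncations, as in Remark \ref{rmk ito formula 0}) to get, for each $t$,
\begin{equation*}
  \begin{split}
    \|u(t)\|_{L^2(\cO)}^2 + \int_t^T\!\!\ll 2a^{ij}\partial_{x_i}u,\partial_{x_j}u\gg ds + \int_t^T\|v(s)\|_{L^2(\cO)}^2\,ds
    = \|G\|_{L^2(\cO)}^2 + (\text{lower order terms}),
  \end{split}
\end{equation*}
where the lower-order terms involve $\sigma$, $b$, $c$, $\varsigma$, $f$, $g$, $h$, $h^0$. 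Then I would use the parabolicity condition in $({\mathcal A}2)$, namely $\lambda|\xi|^2\le(2a^{ij}-\varrho\sigma^{ir}\sigma^{jr})\xi^i\xi^j$, together with Young's inequality in the form $2\la\sigma v,\nabla u\ra\le \varrho^{-1}\|\nabla u\|^2_{\text{(weighted)}}+\varrho'|v|^2$-type bounds, to absorb the $v$- and $\nabla u$-cross terms; the hypothesis $\lambda-\kappa-\varrho'\beta>0$ in $({\mathcal A}2)$ is exactly what makes the remaining coefficients on $\|\nabla u\|_{2}^2$ and $\|v\|_{2}^2$ strictly positive after also absorbing the $\kappa,\beta$ contributions from the Lipschitz bounds on $f$ in $({\mathcal A}1)$.

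The main obstacle is the treatment of the \emph{unbounded} coefficients $b,c,\varsigma$ (only in $\cM^q(Q)$ with $q>(n+2)/2$) and the source terms $f_0\in\cM^p,g_0\in\cM^{p(n+2)/(p+n+2)}$; here the naive Gronwall argument fails because $\int c\,u^2$ is not controlled by $\int u^2$ alone. The remedy is to interpolate: by Lemma \ref{lem emmbedding for space V}, $u\in\cM^{2(n+2)/n}(\Pi_t)$ with $\|u\|_{2(n+2)/n;\Pi_t}\le C(n)\|u\|_{\cV_2(\Pi_t)}$, so by H\"older's inequality (with the conjugate exponents matched to $q$ and to $p$ via the conditions $q>(n+2)/2$, $p>n+2$) one estimates
\begin{equation*}
  \Big|\int_\tau^T\!\!\ll c\,u,u\gg ds\Big| \le \|c\|_{q;\Pi_\tau}\,\|u\|^2_{2(n+2)/n;\Pi_\tau}
  \le C\,\|c\|_{q;\Pi_\tau}\big(\|\nabla u\|^2_{2;\Pi_\tau}+\|u\|^2_{\infty,2;\Pi_\tau}\big),
\end{equation*}
and similarly for the $b\,\partial_x u\cdot u$, $\varsigma v\cdot u$, $f_0\cdot\nabla u$, $g_0\cdot u$ terms. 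The crucial point is that the relevant norms $\|c\|_{q;\Pi_\tau}$ etc. are computed over the \emph{tail} interval $[\tau,T]$, so by choosing $\tau$ close enough to $T$ (using that these norms tend to zero as $\tau\to T$, since the coefficients lie in the $\cM$-spaces) the resulting constant times $\|\nabla u\|^2_{2;\Pi_\tau}+\|u\|^2_{\infty,2;\Pi_\tau}$ can be made smaller than the available coercivity margin, hence absorbed into the left side. This yields the estimate on a short final interval $[T-\delta,T]$; iterating backward on intervals of a fixed length $\delta$ (with the endpoint value $\|u(T-k\delta)\|_{L^2}$ playing the role of a new ``terminal'' datum, and using the stochastic Gronwall--Bellman inequality Lemma \ref{lem sto Gronwall-bellman} to pass from the conditional-expectation inequality to the pointwise bound) gives \eqref{estimates in thm general existence} on all of $Q$, with $C$ depending on $n,p,q,\kappa,\lambda,\beta,\varrho,\Lambda_0,T,|\cO|,L$ as claimed. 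Uniqueness follows by applying the same estimate to the difference of two solutions, which solves the same type of BSPDE with zero data (the Lipschitz assumptions $({\mathcal A}1)$ make the nonlinearities $f,g$ drop out up to the controlled constants), forcing the difference to vanish.

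For existence I would set up a Picard/monotone approximation: let $(u^{(0)},v^{(0)})=(0,0)$ and, given $(u^{(k)},v^{(k)})$, define $(u^{(k+1)},v^{(k+1)})$ as the unique weak solution in $\dot{\sU}\times\dot{\sV}$ of the \emph{linear} BSPDE obtained by freezing the arguments of $f,g$ at $(u^{(k)},\nabla u^{(k)},v^{(k)})$ and moving $f(\cdot,u^{(k)},\nabla u^{(k)},v^{(k)})+h$ and $g(\cdot,u^{(k)},\nabla u^{(k)},v^{(k)})+h^0$ into the free terms; solvability of each linear step follows from \cite[Theorem 2.1]{QiuTangBDSDES2010} after one checks, using $({\mathcal A}1)$ and the a priori bound just established, that the frozen coefficients lie in the required spaces. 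Applying the a priori estimate to the differences $(u^{(k+1)}-u^{(k)},v^{(k+1)}-v^{(k)})$ and invoking the Lipschitz bounds in $({\mathcal A}1)$ shows the map is a contraction on the tail interval $[T-\delta,T]$ in the $\cV_2\times\cM^2$ norm (the contraction constant again being controlled by the smallness of the tail norms of the coefficients), hence the sequence converges there; piecing together over the finitely many subintervals and passing to the limit in the weak formulation \eqref{eq in defn of weak sol} (the nonlinear terms converge by the Lipschitz continuity and the convergence in $\cV_2\times\cM^2$) produces a weak solution $(u,v)\in\dot{\sU}\times\dot{\sV}(G,f+h,g+h^0)$ satisfying \eqref{estimates in thm general existence}. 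The one technical care needed is verifying that all the constants produced are \emph{uniform in $\omega$}, which is why the estimates are carried out in the $\cM$-norms (with their $\esssup_\omega$ built in) rather than in expectation; this is precisely where the conditional-expectation form of It\^o's formula from Section 3 and the stochastic Gronwall--Bellman lemma are used.
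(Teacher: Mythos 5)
Your overall skeleton (It\^o's formula applied to $u^2$, coercivity from $({\mathcal A}2)$, the embedding Lemma \ref{lem emmbedding for space V}, a Gronwall argument, approximation for existence, and uniqueness via the estimate applied to differences) matches the paper's, but there are two genuine gaps. The first is your treatment of the unbounded coefficients: you propose to absorb the term $C\,\|c\|_{q;\Pi_\tau}\bigl(\|\nabla u\|^2_{2;\Pi_\tau}+\|u\|^2_{\infty,2;\Pi_\tau}\bigr)$ into the coercive left-hand side by ``choosing $\tau$ close enough to $T$, using that these norms tend to zero as $\tau\to T$.'' That claim is not justified: $\|c\|_{q;\Pi_\tau}$ is an essential supremum over $\omega$ of the conditional expectations $E[\int_s^T\|c(\sigma,\cdot)\|^q_{L^q(\cO)}\,d\sigma\,|\,\sF_s]$, and for a general $c\in\cM^q(Q)$ this quantity need not vanish as $\tau\to T$ (one can concentrate the mass of $\|c\|^q_{L^q}$ near $T$ on events of small positive probability so that the $\esssup_\omega$ stays bounded away from zero on every tail interval, while the global $\cM^q$ norm remains finite). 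The paper avoids any such smallness: since $q>(n+2)/2$ strictly, it interpolates $\|u\|^2_{\frac{2q}{q-1};\cO_t}\le\|u\|^{2\alpha}_{\frac{2(n+2)}{n};\cO_t}\|u\|^{2(1-\alpha)}_{2;\cO_t}$ with $\alpha=\frac{n+2}{2q}<1$, and Young's inequality then gives $\delta\|u\|^2_{\cV_2(\cO_t)}+C(\delta)\Lambda_0^{1/(1-\alpha)}\|u\|^2_{2;\cO_t}$ with $\delta$ arbitrarily small; the $\|u\|^2_{2;\cO_t}$ piece is dominated by $\int_t^T\|u\|^2_{\cV_2(\cO_s)}\,ds$ and removed by Gronwall over the whole of $[0,T]$ at once. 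Your version discards exactly the interpolation gain that makes the absorption possible.

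The second gap is in the existence scheme. In your Picard iteration, each frozen linear equation still carries the unbounded $b$, $c$, $\varsigma$, so its solvability does not follow from \cite[Theorem 2.1]{QiuTangBDSDES2010}, which requires $b,\varsigma,c\in\cL^{\infty}(Q)$ (assumption $({\mathcal A}3)_0$); the paper's Remark 4.1 states explicitly that even the linear equation with these unbounded coefficients falls outside that framework, so ``checking that the frozen coefficients lie in the required spaces'' cannot succeed. The paper instead truncates the coefficients themselves, $b_k=b1_{\{|b|\le k\}}$, $c_k=c1_{\{|c|\le k\}}$, $\varsigma_k=\varsigma1_{\{|\varsigma|\le k\}}$, $g_0^k=g_01_{\{|g_0|\le k\}}$, solves each truncated nonlinear equation by the known bounded-coefficient result, and proves the solutions are Cauchy in $\dot{\cV}_{2,0}(Q)\times\cM^2(Q)$ via the same a priori estimate; the Cauchy property rests on $B_q(b-b_k,c-c_l,\varsigma-\varsigma_l)\to0$ as $k,l\to\infty$, which does hold, unlike the tail-in-time smallness. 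Your contraction claim for the Picard map, being ``controlled by the smallness of the tail norms of the coefficients,'' inherits the first gap as well.
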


\begin{proof}[{\bf Proof. }]
 \textbf{Step 1.}
 Let  $({\mathcal A}3)_0$ be satisfied. From \cite[Theorem 2.1]{QiuTangBDSDES2010},  there is a unique weak solution $(u,v)$ in the
  space $(\dot{W}^{1,2}_{\sF}(Q)\cap S^2(L^{2}(\cO)))\times W^{2}_{\sF}(Q)$.
    
    $Claim~(*):~(u,v)\in \dot{\sU}\times\dot{\sV}(G,f+h,g+h^0)$. 
   
   We shall prove $Claim~(*)$ in \textbf{Step 2}.
   By Lemma \ref{lem ito foumul}, we have almost surely
  \begin{equation}
    \begin{split}
      &\|u(t)\|_{L^2(\cO)}^2+\int_t^T\|v(s)\|_{L^2(\cO)}^2\,ds\\
      =&\|G\|^2_{L^2(\cO)}
      +2\int_t^T\ll
      u(s),\ \,
            b^i\partial_{x_i}u(s)+c\,u(s)+\varsigma^r v^r(s)+h^0(s)
            \gg ds\\
        &-2\int_t^T\ll \partial_{x_j}  u(s), \ \ a^{ij}\partial_{x_i} u(s)
        +\sigma^{jr} v^r(s)\\
        &~~~~~~~~~~~~~~~~~~~~~~~~~~~+f^j(s,\cdot,u(s),\nabla u(s),v(s))+h^j(s)\gg \, ds \\
        &-2\int_t^T\ll u(s),\ \, v^r(s)\gg\, dW^r_s+2\int_t^T\ll u(s),\ \, g(s,\cdot,u(s),\nabla u(s),v(s))\gg \, ds
    \end{split}
  \end{equation}
  for all $ t\in [0,T]$.
  Therefore, we obtain that almost surely
    \begin{equation}
    \begin{split}
      &E\left[\|u(t)\|_{L^2(\cO)}^2+\int_t^T\|v(s)\|_{L^2(\cO)}^2ds|\sF_t\right]\\
      =
        &\ E\left[\|G\|^2_{L^2(\cO)}|\sF_t\right]+2E\left[\int_t^T \ll u(s),g(s,\cdot,u(s),\nabla u(s),v(s))\gg ds  \big|\sF_t\right]\\
        &+2E\left[\int_t^T\ll
      u(s),\ \,
            b^i\partial_{x_i}u(s)+c\,u(s)+\varsigma^r v^r(s)+h^0(s)
            \gg ds\big|\sF_t\right]\\
        &
        -2E\Big[\int_t^T\ll \partial_{x_j}  u(s),\ \, a^{ij}\partial_{x_i} u(s)+\sigma^{jr} v^r(s)\\
        &~~~~~~~+f^j(s,\cdot,u(s),\nabla u(s),v(s))+h^j(s)\gg ds\big|\sF_t\Big],\,\ \forall t\in[0,T].
    \end{split}
  \end{equation}

  Using the Lipschitz condition and H\"older inequality, we get the following estimates
  \begin{equation}\label{eq est h}
    \begin{split}
      &
        2E\left[\int_t^T\left(\ll u(s),\ h^0(s)\gg-\ll\partial_{x_j}u(s),\ h^j(s)\gg\right) ds\big|\sF_t \right]
      \\
        \leq 
      &
        \ E\left[
        \int_t^T\!\left(\|u(s)\|^2_{L^2(\cO)}+\|h^0(s)\|_{L^2(\cO)}^2+\eps^{-1}\|h(s)\|^2_{L^2(\cO)}+\eps\|\nabla u(s)\|_{L^2(\cO)}^2  \right)ds
        \big|\sF_t\right],
    \end{split}
  \end{equation}
  \begin{equation}\label{eq est g}
    \begin{split}
        &
            \esssup_{\omega\in\Omega}\sup_{\tau\in[t,T]}
            2E\left[\int_\tau^T\ll u(s),\, g(s,\cdot,u(s),\nabla u(s),v(s))\gg ds\big|\sF_{\tau}\right]  \\
        \leq&
            \ \esssup_{\omega\in\Omega}\sup_{\tau\in[t,T]}
            2E\left[\int_\tau^T\ll u(s),\, g_0(s)+L(|u(s)|+|\nabla u(s)|+|v(s)|)\gg ds\big|\sF_{\tau}\right]\\
        \leq&
            \ \eps\|\nabla u\|_{2;\cO_t}^2+\eps_1\|v\|_{2;\cO_t}^2+C(\eps,\eps_1,L)\|u\|_{2;\cO_t}^2\\
        &
            +\esssup_{\omega\in\Omega}\sup_{\tau\in[t,T]}2E\left[\int_{\tau}^T\ll|u(s)|, \,|g_0(s)| \gg ds\big|\sF_{\tau}\right]\\
        \leq&
            \ \eps\|\nabla u\|_{2;\cO_t}^2+\eps_1\|v\|_{2;\cO_t}^2+C(\eps,\eps_1,L)\|u\|_{2;\cO_t}^2
            +2|\cO_t|^{\frac{1}{2}-\frac{1}{p}}     \|g_0\|_{\frac{p(n+2)}{n+2+p};\cO_t}  \|u\|_{\frac{2(n+2)}{n};\cO_t}\\
        \leq&
            \ \eps\|\nabla u\|_{2;\cO_t}^2+\eps_1\|v\|_{2;\cO_t}^2+C(\eps,\eps_1,L)\|u\|_{2;\cO_t}^2
            +c(n)|\cO_t|^{\frac{1}{2}-\frac{1}{p}}     \|g_0\|_{\frac{p(n+2)}{n+2+p};\cO_t}  \|u\|_{\cV_2(\cO_t)}
        \\
        \leq&
            \ \eps\|\nabla u\|_{2;\cO_t}^2+\eps_1\|v\|_{2;\cO_t}^2+C(\eps,\eps_1,L)\|u\|_{2;\cO_t}^2
            +\delta\|u\|_{\cV_2(\cO_t)}^2
        \\
        &
            +C(\delta,n,p,|Q|)\|g_0\|_{\frac{p(n+2)}{n+2+p};\cO_t}^2
    \end{split}
  \end{equation}
  and
  \begin{equation}\label{eq est linear part}
    \begin{split}
      &\esssup_{\omega\in\Omega}\sup_{\tau\in[t,T]}
      2E\left[\int_{\tau}^T\ll u(s),\ b^i(s)\partial_{x_i}u(s)+c(s)\,u(s)+\varsigma^r(s)v^r(s) \gg ds
                    \big|\sF_{\tau}\right]\\
      \leq &
      \ (\eps^{-1}+\eps^{-1}_1)\esssup_{\omega\in\Omega}\sup_{\tau\in[t,T]}
      E\left[\int_{\tau}^T\ll |b(s)|^2+|c(s)|+|\varsigma(s)|^2,\ u^2(s) \gg ds
                    \big|\sF_{\tau}\right]\\
      &+\eps \|\nabla u\|_{2;\cO_t}^2
      +\eps_1\|v\|_{2;\cO_t}^2\\
      \leq&
      \ \eps \|\nabla u\|_{2;\cO_t}^2
      +\eps_1\|v\|_{2;\cO_t}^2
      +(\eps^{-1}+\eps^{-1}_1)
      B_q(b,c,\varsigma)\|u\|^2_{\frac{2q}{q-1};\cO_t} \\
      \leq&
       \ \eps \|\nabla u\|_{2;\cO_t}^2
      +\eps_1\|v\|_{2;\cO_t}^2
      +(\eps^{-1}+\eps^{-1}_1)
      B_q(b,c,\varsigma)\|u\|^{2\alpha}_{\frac{2(n+2)}{n};\cO_t}\|u\|^{2(1-\alpha)}_{2;\cO_t} \\
      \leq&
      \ \eps \|\nabla u\|_{2;\cO_t}^2
      +\eps_1\|v\|_{2;\cO_t}^2\\
      &\ +(\eps^{-1}+\eps^{-1}_1)
      B_q(b,c,\varsigma)\left(C(n)\|u\|_{\cV_2(\cO_t)}\right)^{2\alpha}\|u\|^{2(1-\alpha)}_{2;\cO_t}
      \textrm{ (by Lemma \ref{lem emmbedding for space V})}\\
      \leq&
      \ \eps \|\nabla u\|_{2;\cO_t}^2
      +\eps_1\|v\|_{2;\cO_t}^2+\delta \|u\|_{\cV_2(\cO_t)}^2 \\
      &+
      C(\delta,n,q)\left|(\eps^{-1}+\eps_1^{-1})B_q(b,c,\varsigma)\right|^{\frac{1}{1-\alpha}}\|u\|_{2;\cO_t}^2
    \end{split}
  \end{equation}
  with
   $\alpha:=\frac{n+2}{2q}\in (0,1)$ and
    the three positive small parameters $\eps$, $\eps_1$ and $\delta$ waiting  to be determined later.
  Also, there exists a constant $\theta>\varrho'=\frac{\varrho}{\varrho-1}$ such that
  $\lambda-\kappa-\beta\theta>0$ and
  \begin{equation}\label{eq est principle part}
    \begin{split}
      -&E\left[\int_t^T2\ll \partial_{x_j}  u(s),a^{ij}\partial_{x_i} u(s)
      +\sigma^{jr}v^r(s)+f^j(s,u(s),\nabla u(s),v(s))\gg ds   \big|\sF_t\right]\\
      \leq&
      -E\left[\int_t^T\ll \partial_{x_j}  u(s),\,
                    (2a^{ij}(s)-\varrho\sigma^{jr}(s)\sigma^{ir}(s))\partial_{x_i} u(s)\gg ds
                    \big|\sF_t\right]\\
                    & +\frac{1}{\varrho}E\left[\int_t^T\|v(s)\|^2_{L^2(\cO)}\,ds\big|\sF_t\right]\\
      &
      +2E\left[
            \int_t^T \ll |\nabla u(s)|,\,L|u(s)|+\frac{\kappa}{2}|\nabla u(s)|+\beta^{\frac{1}{2}}|v(s)|+|f_0(s)|\gg ds
           \big|\sF_t\right]\\
      \leq&
      -(\lambda-\kappa-\beta\theta-\eps)E\left[\int_t^T
                    \|\nabla u(s)\|_{L^2(\cO)}^2\,ds
                    \big|\sF_t\right]
      +C(\eps)\|f_0\|^2_{2;\cO_t}\\
      &+\left(\frac{1}{\varrho}+\frac{1}{\theta}\right)E\left[\int_t^T\|v(s)\|^2_{L^2(\cO)}\,ds\big|\sF_t\right]
      +C(\eps,L)E\left[\int_t^T\| u(s)\|^2_{L^2(\cO)}\,ds\big|\sF_t\right]\\
      \leq&
      -(\lambda-\kappa-\beta\theta-\eps)E\left[\int_t^T
                    \|\nabla u(s)\|_{L^2(\cO)}^2\,ds
                    \big|\sF_t\right]\\
    &+C(\eps,|Q|,p,L)\left\{E\left[\int_t^T\| u(s)\|^2_{L^2(\cO)}\,ds\big|\sF_t\right]+\|f_0\|_{p;\cO_t}^2\right\}\\
      &
      +\left(\frac{1}{\varrho}+\frac{1}{\theta}\right)E\left[\int_t^T\|v(s)\|^2_{L^2(\cO)}\,ds\big|\sF_t\right]
      ,\ \,\forall t\in[0,T]\ \,a.s..
    \end{split}
  \end{equation}
  Choosing $\eps$ and $\eps_1$ to be small enough, we get
  \begin{equation*}
    \begin{split}
      &\|u\|_{\cV_2(\cO_t)}^2+\|v\|_{2;\cO_t}^2\\
      \leq &
        \ 3~ \esssup_{\omega\in\Omega}\sup_{\tau\in[t,T]}\left\{
            \|u(\tau)\|_{L^2(\cO)}^2+E\left[ \int_{\tau}^T(\|\nabla u(s)\|_{L^2(\cO)}^2+\|v(s)\|_{L^2(\cO)}^2)  \,ds\big|\sF_{\tau}\right]\right\}\\
      \leq&
        \ C_1\biggl\{\|G\|_{L^{\infty}(\Omega,\sF_T,L^2(\cO))}^2
            +\|f_0\|_{p;\cO_t}^2+\left|H_2(h,h^0)\right|^2\\
    &+\delta\|u\|_{\cV_2(\cO_t)}^2+C(\delta,n,q,\Lambda_0)\int_t^T\|u(s)\|_{\cV_2(\cO_s)}^2\, ds
            +C(\delta,n,p,|Q|)\|g_0\|_{\frac{p(n+2)}{n+2+p};\cO_t}^2 \biggr\}
    \end{split}
  \end{equation*}
  with  the constant $C_1$ being independent of $\delta$.
  Then by choosing $\delta$ to be so small that $C_1\delta<1/2$, we obtain
  \begin{equation}\label{eq gronwall in solve}
    \begin{split}
      &\|u\|_{\cV_2(\cO_t)}^2+\|v\|_{2;\cO_t}^2\\
      \leq &
        \ C\left\{\|G\|_{L^{\infty}(\Omega,\sF_T,L^2(\cO))}^2
            +\int_t^T\|u(s)\|_{\cV_2(\cO_s)}^2\,ds+\left|A_p(f_0,g_0)\right|^2
            +\left|H_2(h,h^0)\right|^2 \right\}.
    \end{split}
  \end{equation}
  Thus, it follows from Gronwall inequality  that
  \begin{equation}\label{estim 1 in thm solv (1.1)}
    \|u\|_{\cV_2(\cO_t)}^2+\|v\|_{2;\cO_t}^2
      \leq
        C\left\{\|G\|_{L^{\infty}(\Omega,\sF_T,L^2(\cO))}^2
            +\left|A_p(f_0,g_0)\right|^2+\left|H_2(h,h^0)\right|^2 \right\}
  \end{equation}
  with the constant $C$ depending on $T,L,\Lambda_0,\lambda,\beta,\kappa,\varrho,n,p,q$ and $|Q|$.

\textbf{Step 2.} We prove $Claim~(*)$. It is sufficient to prove $(u,v)\in\dot{\cV}_{2,0}(Q)\times\cM^2(Q)$. Making estimates like \eqref{eq est h} and \eqref{eq est principle part}, we obtain
\begin{equation}
  \begin{split}
        &
         \|u(t)\|_{L^2(\cO)}^2+E\left[\int_t^T\|v(s)\|_{L^2(\cO)}^2ds|\sF_t\right]\\
      =
        &
         \ E\left[\|G\|^2_{L^2(\cO)}|\sF_t\right]+2E\left[\int_t^T \ll u(s),g(s,\cdot,u(s),\nabla u(s),v(s))\gg ds  \big|\sF_t\right]
        \\
        &
         +2E\left[\int_t^T\ll
         u(s),\ \,
            b^i\partial_{x_i}u(s)+c\,u(s)+\varsigma^r v^r(s)+h^0(s)
            \gg ds\big|\sF_t\right]
        \\
        &
         -2E\Big[\int_t^T\ll \partial_{x_j}  u(s),\ \, a^{ij}\partial_{x_i} u(s)+\sigma^{jr} v^r(s)\\
        &
         ~~~~~~~+f^j(s,\cdot,u(s),\nabla u(s),v(s))+h^j(s)\gg ds\big|\sF_t\Big]
        \\
    \leq
        &
         \ -(\lambda-\kappa-\beta\theta-\eps)E\left[\int_t^T
                    \|\nabla u(s)\|_{L^2(\cO)}^2\,ds
                    \big|\sF_t\right]
        \\
        &
         +\left(\frac{1}{\varrho}+\frac{1}{\theta}+\eps\right)
                    E\left[\int_t^T\|v(s)\|^2_{L^2(\cO)}\,ds\big|\sF_t\right]
        \\
        &
         +E\left[\|G\|^2_{L^2(\cO)}|\sF_t\right]
         +C(\eps)\left(\left|H_2(f_0,g_0)\right|^2+\left|H_2(h,h^0)\right|^2\right)
        \\
        &   +C(\eps,\lambda,\beta,\kappa,\varrho,L,\||b|\|_{\cL^{\infty}(Q)},
        \|c\|_{\cL^{\infty}(Q)},\||\varsigma|\|_{\cL^{\infty}(Q)})
         E\left[
         \int_t^T \|u(s)\|_{L^2(\cO)}^2 ds
         \big|\sF_t\right]
  \end{split}
\end{equation}
with the positive constant $\eps$ waiting to be determined later.
Letting $\eps$ be small enough, we have almost surely
\begin{equation*}
  \begin{split}
    &
        \|u(t)\|_{L^2(\cO)}^2+E\left[\int_t^T
        \left(\|\nabla u(s)\|_{L^2(\cO)}^2 + \|v(s)\|_{L^2(\cO)}^2\right)ds |\sF_t\right]
    \\
  \leq
    &
        \ C\left\{\|G\|^2_{L^{\infty}(\Omega,\sF_T,L^2(\cO))}
        + \left|H_2(f_0,g_0)\right|^2+\left|H_2(h,h^0)\right|^2
         +E\left[
         \int_t^T \|u(s)\|_{L^2(\cO)}^2 ds
         \big|\sF_t\right]
          \right\}
  \end{split}
\end{equation*}
for all $t\in[0,T]$. Then, by Lemma \ref{lem sto Gronwall-bellman} we obtain
\begin{equation*}
  \begin{split}
    &
       \esssup_{\omega\in\Omega}\sup_{t\in [0,T]}\left\{ \|u(t)\|_{L^2(\cO)}^2+E\left[\int_t^T
        \left(\|\nabla u(s)\|_{L^2(\cO)}^2 + \|v(s)\|_{L^2(\cO)}^2\right)ds |\sF_t\right]
        \right\}
    \\
  \leq
    &
        \ C\left\{\|G\|^2_{L^{\infty}(\Omega,\sF_T,L^2(\cO))}
        + \left|H_2(f_0,g_0)\right|^2+\left|H_2(h,h^0)\right|^2
          \right\}
  \end{split}
\end{equation*}
with the constant $C$ depending on $\lambda,\beta,\kappa,\varrho,L,T,\||b|\|_{\cL^{\infty}(Q)},
        \|c\|_{\cL^{\infty}(Q)},\||\varsigma|\|_{\cL^{\infty}(Q)}$.
Hence, $(u,v)\in\dot{\cV}_{2,0}(Q)\times\cM^2(Q)$. We complete the proof of $Claim~(*)$.

\textbf{Step 3.}
 Now we consider the general case of assumption~$({\mathcal A}3)$.
  The existence of the solution can be shown by approximation. As $p>n+2$ and $\cM^p(Q)\subset\cM^2(Q)$, $f_0\in \cM^2(Q)$.
  We  approximate the functions $b$, $c$, $\varsigma$ and $g$ by
  \begin{equation}\label{eq in thm solve (1.1) appxim g}
    b_k:=b 1_{\{|b|\leq k\}},\ c_k:=c 1_{\{|c|\leq k\}},\ \varsigma_k:=\varsigma 1_{\{|\varsigma|\leq k\}} \ {\rm and}\   g^k  :=g-g_0+g^k_0,
  \end{equation}
   with $g^k_0=g_0 1_{\{|g_0|\leq k\}}$.
    Then we have
    $$ \lim_{k\rightarrow \infty}B_q(b-b_k,c-c_k,\varsigma-\varsigma_k)+A_p(0,g_0-g_0^k)=0. $$
    Let $(u_k,v_k)\in \dot{\cV}_{2,0}(Q)\times \cM^2(Q)$ be the unique weak solution to   (\ref{1.1}) with
    $(b,c,\varsigma,f,g)$ being replaced by $(b_k,c_k,\varsigma_k,f+h,g^k+h^0)$.
    Then by estimate \eqref{estim 1 in thm solv (1.1)}, there exists a positive constant $C_0$ such that
    $$\sup_{k\in \bN}\left\{\|u_k\|_{\cV_2(Q)}^2+\|v_k\|_{2;Q}^2\right\} < C_0.$$
    For $k,l\in\bN$, the pair of random fields $(u_{kl},v_{kl}):=(u_k-u_l,v_k-v_l)\in \dot{\cV}_{2,0}(Q)\times \cM^2(Q)$ is the weak solution to the following BSPDE:
  \begin{equation*}
    (k,l)~~
    \left\{\begin{array}{l}
    \begin{split}
        -du_{kl}(t,x)=&\displaystyle \biggl[\partial_{x_j}\Bigl(a^{ij}(t,x)\partial_{x_i} u_{kl}(t,x)
        +\sigma^{jr}(t,x) v_{kl}^{r}(t,x)     \Bigr) +b_k^j(t,x)\partial_{x_j}u_{kl}(t,x)\\
        &\displaystyle
         +c_k(t,x)u_{kl}(t,x)+\varsigma^{r}_k(t,x)v^r_{kl}(t,x)
         \\
         &\displaystyle
         +b_{kl}^j(t,x)\partial_{x_j}u_{l}(t,x)
         +c_{kl}(t,x)u_l(t,x)+\varsigma_{kl}^r(t,x) v^r_l(t,x)\\
         &\displaystyle
         +\bar{g}_{kl}(t,x,u_{kl}(t,x),\nabla u_{kl}(t,x),v_{kl}(t,x))\\
        &\displaystyle +\partial_{x_j}\bar{f}_{kl}^j(t,x,u_{kl}(t,x),\nabla u_{kl}(t,x),v_{kl}(t,x))
                \biggr]\, dt\\ &\displaystyle
           -v_{kl}^{r}(t,x)\, dW_{t}^{r}, \quad
                     (t,x)\in Q:=[0,T]\times \mathcal {O};\\
        u_{kl}(T,x)=&0, \quad x\in\cO
        \end{split}
    \end{array}\right.
    \end{equation*}
    with
    \begin{equation*}
        \begin{split}
        \bar{f}_{kl}(t,x,R,Y,Z):=&f(t,x,R+u_l(t,x),Y+\nabla u_l(t,x),Z+v_l(t,x))\\
                        &-f(t,x,u_l(t,x),\nabla u_l(t,x),v_l(t,x)),\\
        \bar{g}_{kl}(t,x,R,Y,Z):=&g^k(t,x,R+u_l(t,x),Y+\nabla u_l(t,x),Z+v_l(t,x))\\
                        &-g^l(t,x,u_l(t,x),\nabla u_l(t,x),v_l(t,x)),\\
        (b_{kl},\ c_{kl},\ \varsigma_{kl})(t,x)
                        :=&
                        (b_k-b_l,\ c_k-c_l,\ \varsigma_k-\varsigma_l)(t,x).
     \end{split}
    \end{equation*}
    Since
   \begin{equation*}
    \begin{split}
      &\esssup_{\omega\in\Omega}\sup_{\tau\in[t,T]}
      2E\left[\int_{\tau}^T\ll u_{kl}(s),\ b_{kl}^i\partial_{x_i}u_l(s)
      +c_{kl}\,u_l(s)+\varsigma^r_{kl}v_l^r(s) \gg ds
                    \big|\sF_{\tau}\right]\\
      \leq &
      \ 2{\bar{\eps}}^{-1}
      \esssup_{\omega\in\Omega}\sup_{\tau\in[t,T]}
      E\left[\int_{\tau}^T\!\! \ll \left|b_{kl}(s)\right|^2
      +|c_{kl}(s)|+\left|\varsigma_{kl}(s)\right|^2,\ u_{kl}^2(s) \gg ds
                    \big|\sF_{\tau}\right]\\
      &+\bar{\eps} \left(\|\nabla u_l\|_{2;\cO_t}^2
      +\|v_l\|_{2;\cO_t}^2\right)\\
      \leq&
      \ \bar{\eps}\left(\|u_l\|_{\cV_2(Q)}^2+\|v_l\|_{2;Q}^2  \right)
      +2\bar{\eps}^{-1}
      B_q(b_{kl},c_{kl},\varsigma_{kl})\|u_{kl}\|^2_{\frac{2q}{q-1};\cO_t} \\
      \leq&
      \ \bar{\eps} C_0
      +\delta \|u_{kl}\|_{\cV_{2}(\cO_t)}^2+
      C(\delta,n,q)\left|\bar{\eps}^{-1} B_q(b_{kl},c_{kl},\varsigma_{kl})\right|^{\frac{2q}{2q-n-2}}\|u_{kl}\|_{2;\cO_t}^2
      \textrm{ (by Lemma \ref{lem emmbedding for space V})},
    \end{split}
  \end{equation*}
    in a similar way to the derivation of \eqref{eq gronwall in solve},  we obtain
    \begin{equation*}
      \begin{split}
        &\|u_{kl}\|_{\cV_2(\cO_t)}^2+\|v_{kl}\|_{2;\cO_t}^2\\
        \leq&
          \ C\bigg\{ \bar{\eps}+\left|A_p(0,g_0^k-g_0^l)\right|^2+\left(1+  \left|\bar{\eps}^{-1} B_q(b_{kl},c_{kl},\varsigma_{kl})\right|^{\frac{2q}{2q-n-2}}  \right)
          \int_t^T\|u_{kl}(s)\|_{\cV_2(\cO_s)}^2\,ds      \bigg\}
      \end{split}
    \end{equation*}
     which, by Gronwall inequality, implies
    \begin{equation}
      \begin{split}
        &\|u_{kl}\|_{\cV_2(Q)}^2+\|v_{kl}\|_{2;Q}^2\\
         \leq\ &C\left( \bar{\eps}+\left|A_p(0,g_0^k-g_0^l)\right|^2\right)
        \exp{\left[T \left(1+  \left|\bar{\eps}^{-1} B_q(b_{kl},c_{kl},\varsigma_{kl})\right|^{\frac{2q}{2q-n-2}}  \right)\right]}
      \end{split}
    \end{equation}
    with the constant $C$ being independent of $k$, $l$ and $\bar{\eps}$.
    By choosing  $\bar{\eps}$ to be small and then $k$ and $l$ to be sufficiently large, we conclude that
    $(u_k,v_k)$ is a Cauchy sequence in $\dot{\cV}_{2,0}(Q)\times\cM^2(Q)$. Passing to the limit, we
    check that the limit $(u,v)\in \dot{\sU}\times\dot{\sV}(G,f+h,g+h^0)$. In view of estimate \eqref{estim 1 in thm solv (1.1)} we prove estimate \eqref{estimates in thm general existence}.

  \tbf{Step 4.}
  It remains to prove the uniqueness. Assume that $(u',v')$ and $(u,v)$ are two weak solutions in $\dot{\cV}_{2,0}(Q)\times\cM^2(Q)$. Then their difference $(\bar{u},\bar{v}):=(u-u',v-v')\in\dot{\sU}\times\dot{\sV}(0,\bar{f},\bar{g})$ with
  \begin{equation*}
    \begin{split}
      \bar{f}(t,x,R,Y,Z):=&f(t,x,R+u'(t,x),Y+\nabla u'(t,x),Z+v'(t,x))\\
                        &-f(t,x,u'(t,x),\nabla u'(t,x),v'(t,x)),\\
      \bar{g}(t,x,R,Y,Z):=&g(t,x,R+u'(t,x),Y+\nabla u'(t,x),Z+v'(t,x))\\
                        &-g(t,x,u'(t,x),\nabla u'(t,x),v'(t,x)).
    \end{split}
  \end{equation*}
  Since $\bar{f}_0=0,\ \bar{g}_0=0$ and $\bar{u}(T)=0$,  we deduce from \eqref{estim 1 in thm solv (1.1)} that $\bar{u}=0$ and $\bar{v}=0$. The proof is complete.
\end{proof}

\begin{rmk}
  On the basis of the  monotone operator theory, Qiu and Tang in \cite{QiuTangBDSDES2010}
  established a theory of solvability for quasi-linear BSPDEs in an abstract framework.
  However even for the linear case $(f,g)\equiv (f_0,g_0)$,
  our BSPDE (\ref{1.1}) under assumptions $({\mathcal A}1)$--$({\mathcal A}3)$ falls beyond the framework of Qiu and Tang~\cite{QiuTangBDSDES2010}
  since our $b, c,$ and $\varsigma$ may be unbounded.
\end{rmk}

\begin{cor}\label{cor after thm solve (1.1)}
  Let assumptions $({\mathcal A}1)$--$({\mathcal A}3)$ be true,  $\left\{h^i,i=0,1,\cdots,n\right\}\subset \cM^2(Q)$ and $(u,v)\in \dot{\sU}\times\dot{\sV}(G,f+h,g+h^0)$ with $h=(h^1,\cdots,h^n)$. Let $\phi:\bR\times\bR^n\times\bR\longrightarrow\bR$ satisfy the assumptions of Lemma \ref{lem ito foumul}. Then we have almost surely
  \begin{equation*}
  \begin{split}
    &\int_{\cO}\phi(t,x,u(t,x))\,dx
    +\frac{1}{2}\int_t^T\ll \phi''(s,\cdot,u(s)),\ |v(s)|^2\gg ds\\
    =&
    \int_{\cO}\phi(T,x,G(x))\,dx
        -\int_t^T\int_{\cO}\partial_{s}\phi(s,x,u(s,x))\,dxds\\
    &+\int_t^T\ll \phi'(s,\cdot,u(s)),\ b^i\partial_{x_i}u(s)+c\,u(s)+\varsigma^r v^r(s)
    +h^0(s)\gg ds\\
    &+\int_t^T\ll \phi'(s,\cdot,u(s)),\ g(s,\cdot,u(s),\nabla u(s),v(s))\gg ds               \\
    &-\int_{t}^T \ll \phi''(s,\cdot,u(s))\partial_{x_i} u(s)
    +\partial_{x_i} \phi'(s,\cdot,u(s)),\quad
    a^{ji}\partial_{x_j}  u(s)
    +\sigma^{ri}v^{r}(s)\\
    &\quad \quad+f^{i}(s,\cdot,u(s),\nabla u(s),v(s))
    +h^i(s) \gg  ds\\
    &-\int_t^T\ll \phi'(s,\cdot,u(s)),\ v^{r}(s)\gg dW_s^r,\, \ \forall t\in [0,T].
  \end{split}
  \end{equation*}
\end{cor}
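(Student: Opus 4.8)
The plan is to recast $(u,v)$ as a weak solution of an equation of the type \eqref{eq bspde in ito formula} and then apply the generalized It\^o formulas of Section~3. Since $(u,v)\in\dot{\sU}\times\dot{\sV}(G,f+h,g+h^0)$, we have $u\in\dot{\cV}_{2,0}(Q)\subset\dot{W}^{1,2}_{\sF}(Q)$ with $L^2(\cO)$-continuous trajectories, so $u\in S^2(L^2(\cO))$, and $v\in\cM^2(Q)\subset W^2_{\sF}(Q)$. Comparing the weak formulation \eqref{eq in defn of weak sol} of \eqref{1.1} (with $f,g$ replaced by $f+h,g+h^0$) with the weak form of \eqref{eq bspde in ito formula}, one reads off that $(u,v)$ solves \eqref{eq bspde in ito formula} in the weak sense of Definition~\ref{definition of weak solution}, where $u(T)=G$, $z^r=v^r$, the divergence coefficient of \eqref{eq bspde in ito formula} equals $a^{ij}\partial_{x_i}u+\sigma^{jr}v^r+f^j(\cdot,\cdot,u,\nabla u,v)+h^j$, and its zeroth-order coefficient equals $b^i\partial_{x_i}u+c\,u+\varsigma^rv^r+g(\cdot,\cdot,u,\nabla u,v)+h^0$. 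Substituting these back into \eqref{eq ito lemma} will then produce exactly the asserted identity.

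The first substantive step is a regularity check. Using $({\mathcal A}2)$ (boundedness of $a,\sigma$), the Lipschitz growth $|f(\cdot,u,\nabla u,v)|\le|f_0|+L|u|+\tfrac{\kappa}{2}|\nabla u|+\beta^{1/2}|v|$ from $({\mathcal A}1)$, the memberships $f_0\in\cM^p(Q)\subset\cM^2(Q)$ and $h^j\in\cM^2(Q)$, and $u\in\cV_2(Q)$, $v\in\cM^2(Q)$, one verifies that the divergence coefficients lie in $W^2_{\sF}(Q)$ and that $z=v\in W^2_{\sF}(Q)$. The zeroth-order coefficient is more delicate: since $b,c,\varsigma$ are merely of class $\cM^q(Q)$ with $q>(n+2)/2$ and $g_0\in\cM^{p(n+2)/(p+n+2)}(Q)$, one cannot expect it to lie in $W^2_{\sF}(Q)$. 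Combining instead Lemma~\ref{lem emmbedding for space V} (which gives $u\in\cM^{\frac{2(n+2)}{n}}(Q)$) with $\nabla u,v\in\cM^2(Q)$, H\"older's inequality in the space variable --- for which the exponent restrictions $p>n+2$ and $q>(n+2)/2$ of $({\mathcal A}3)$ are exactly what is needed --- and H\"older's inequality in time, one shows that this coefficient lies in $W^1_{\sF}(Q)$, and more generally that every bilinear pairing occurring on the right-hand side of the claimed identity is integrable over $\Omega\times[0,T]$.

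Hence Lemma~\ref{lem ito foumul} is not directly applicable, since it would require the zeroth-order coefficient to be in $W^2_{\sF}(Q)$; but Lemma~\ref{rmk ito formula}, whose relevant hypothesis is precisely that this coefficient lie in $W^1_{\sF}(Q)$, is --- as long as $\phi$ additionally satisfies $\phi'\le M$. For such $\phi$, Lemma~\ref{rmk ito formula} applied to \eqref{eq bspde in ito formula} with the coefficients just identified yields \eqref{eq ito lemma}, and substituting back gives the corollary. To remove the one-sided bound on $\phi'$ (the corollary only assumes the hypotheses of Lemma~\ref{lem ito foumul}, hence only $|\phi'(t,x,r)|\le M|r|$), I would argue by approximation: set $\phi_k(t,x,r):=\phi(t,x,\vartheta_k(r))$ for a smooth $1$-Lipschitz truncation $\vartheta_k$ with $\vartheta_k(0)=0$ and $\vartheta_k(r)=r$ for $|r|\le k$. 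Each $\phi_k$ still obeys the hypotheses of Lemma~\ref{lem ito foumul} (with the same $M$) and in addition has bounded $\phi_k'$, while $\phi_k\to\phi$, $\phi_k'\to\phi'$, $\phi_k''\to\phi''$, $\partial_{x_i}\phi_k'\to\partial_{x_i}\phi'$ and $\partial_t\phi_k\to\partial_t\phi$ pointwise, uniformly dominated through $|\phi_k''|\le M$, $|\phi_k'|\le M|r|$ and $|\partial_{x_i}\phi_k'|\le M|r|$. Applying the bounded-$\phi'$ case to each $\phi_k$ and letting $k\to\infty$, one passes to the limit term by term by dominated convergence, the dominating functions being the integrable quantities obtained in the regularity step.

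The main obstacle is that regularity step for the zeroth-order coefficient: one must see that, although it fails to lie in $W^2_{\sF}(Q)$, it nevertheless lies in $W^1_{\sF}(Q)$ --- so that the weaker Lemma~\ref{rmk ito formula} is the correct instrument --- and that all the attendant pairings are integrable; this is exactly where Lemma~\ref{lem emmbedding for space V} and the structural exponents $p>n+2$, $q>(n+2)/2$ of $({\mathcal A}3)$ enter. Granted that, the reduction to bounded $\phi'$ and the passage to the limit are routine.
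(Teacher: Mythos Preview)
Your argument is correct but follows a genuinely different route from the paper. The paper does \emph{not} work directly with the weak solution $(u,v)$ of \eqref{1.1}: instead it truncates the unbounded coefficients and data, replacing $(b,c,\varsigma,g)$ by the bounded approximants $(b_k,c_k,\varsigma_k,g^k)$ of \eqref{eq in thm solve (1.1) appxim g}, and invokes Theorem~\ref{thm solve (1.1)} to obtain approximate solutions $(u_k,v_k)$. For each $k$ the zeroth-order coefficient is then in $W^{2}_{\sF}(Q)$, so Lemma~\ref{lem ito foumul} applies directly (no extra hypothesis on $\phi'$ is needed), yielding the identity \eqref{eq for  g^k   in thm solve (1.1)}. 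One then passes to the limit using the convergence $\|u_k-u\|_{\cV_2(Q)}+\|v_k-v\|_{2;Q}\to0$ already established in the proof of Theorem~\ref{thm solve (1.1)}.

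By contrast, you keep $(u,v)$ fixed, diagnose that the zeroth-order coefficient lies only in $W^{1}_{\sF}(Q)$, and therefore appeal to the weaker Lemma~\ref{rmk ito formula}, compensating for its additional one-sided bound $\phi'\le M$ by truncating $\phi$ rather than the data. Both strategies are sound; the paper's has the advantage of recycling the approximation machinery from Theorem~\ref{thm solve (1.1)} and avoiding Lemma~\ref{rmk ito formula} altogether, while yours is more self-contained and makes transparent exactly where the exponent conditions $p>n+2$, $q>(n+2)/2$ are used. One small remark: with your choice $\phi_k(t,x,r)=\phi(t,x,\vartheta_k(r))$ the bound on $\phi_k''$ picks up an extra term $\phi'(t,x,\vartheta_k(r))\vartheta_k''(r)$, so the uniform constant is not literally ``the same $M$'' but some $M'$ depending on the fixed profile $\vartheta$; this is harmless for the dominated-convergence step.
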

The proof of the corollary is rather standard and is sketched below.
\begin{rmk}\label{rmk after cor of solve (1.1)}
  In a similar way to Remark \ref{rmk ito formula 0}, our corollary also holds for $\psi$ in Remark \ref{rmk ito formula 0}.
\end{rmk}
\begin{proof}[{\bf Sketch of the proof}]
  First, one can check that all the terms involved in our assertion is well defined.
  Similar to the proof of Theorem \ref{thm solve (1.1)}, we still approximate $(b,c,\varsigma,g)$ by $(b_k,c_k,\varsigma_k, g^k  )$ which is defined in  \eqref{eq in thm solve (1.1) appxim g}.
   By Theorem \ref{thm solve (1.1)}, there is a unique weak solution $(u_k,v_k)$ to   (\ref{1.1}) with $(b,c,\varsigma,f,g)$ being replaced by $(b_k,c_k,\varsigma_k,f+h, g^k+h^0  )$.
   Then by Lemma \ref{lem ito foumul}, we have for each $k\in\bN$,
   \begin{equation}\label{eq for  g^k   in thm solve (1.1)}
    \begin{split}
        &\int_{\cO}\phi(t,x,u_k(t,x))\,dx
        +\frac{1}{2}\int_t^T\ll \phi''(s,\cdot,u_k(s)),\,|v_k(s)|^2\gg ds\\
        =&
        \int_{\cO}\phi(T,x,G(x))\,dx
            -\int_t^T\!\int_{\cO}\partial_{s}\phi(s,x,u_k(s,x))\,dxds\\
        &+\int_t^T\ll \phi'(s,\cdot,u_k(s)),\ b_k^i\partial_{x_i}u_k(s)+c_k\,u_k(s)+\varsigma_k^rv_k^r(s)
        +h^0(s)\gg ds\\
        &+\int_t^T\ll \phi'(s,\cdot,u_k(s)),\, g^k  (s,\cdot,u_k(s),\nabla u_k(s),v_k(s))\gg ds\\
        &-\int_t^T\ll \phi'(s,\cdot,u_k(s)),\,v_k^{r}(s)\gg dW_s^r               \\
        &-\int_{t}^T\ll \phi''(s,\cdot,u_k(s))\partial_{x_i} u_k(s)
        +\partial_{x_i} \phi'(s,\cdot,u_k(s)),\quad
        a^{ji}\partial_{x_j}  u_k(s)
        +\sigma^{ri}v_k^{r}(s)\\
        &\quad \quad+f^{i}(s,\cdot,u_k(s),\nabla u_k(s),v_k(s))+h^i(s)\gg ds
    \end{split}
  \end{equation}
 almost surely for all $t\in[0,T]$.
   On the other hand, from the proof of Theorem \ref{thm solve (1.1)} it follows that
   $$\lim_{k\rightarrow \infty}\left\{\|u-u_k\|_{\cV_2(Q)}+\|v-v_k\|_{2;Q}     \right\}=0.$$
   Hence passing to the limit in $L^1(\Omega,\sF)$ and taking into account the path-wise continuity of $u$, we prove our assertion.
\end{proof}

 We have

\begin{prop}\label{propsition ito non null bdary}
      Let assumptions $({\mathcal A}1)$--$({\mathcal A}3)$ be satisfied,  $\left\{h^i,i=0,1,\cdots,n\right\}\subset \cM^2(Q)$ and $(u,v)\in\sU\times\sV(G,f+h,g+h^0)$
      with $h=(h^1,\cdots,h^n)$ and $u^+\in \dot{\cV}_{2,0}(Q)$. Let $\phi:\bR\times\bR^n\times\bR\longrightarrow\bR$ satisfy the assumptions of Lemma \ref{lem ito foumul}.
      Then, with probability 1, the following relation
\begin{equation*}
  \begin{split}
    &\int_{\cO}\phi(t,x,u^+(t,x))\,dx+\frac{1}{2}\int_t^T\ll \phi''(s,\cdot,u^+(s)),\, |v^u(s)|^2\gg ds\\
    =&
    \int_{\cO}\phi(T,x,G^+(x))\,dx-\int_t^T\int_{\cO}\partial_{s}\phi(s,x,u^+(s,x))\,dxds
                   \\
    &-\int_{t}^T\ll \phi''(s,\cdot,u^+(s))\partial_{x_i} u^+(s)
    +\partial_{x_i} \phi'(s,\cdot,u^+(s)),\quad
    a^{ji}(s)\partial_{x_j}  u^+(s)\\
   &\quad\quad +\sigma^{ri}(s)v^{r,u}(s)+f^{i,u}(s)\gg \, ds\\
   &+\int_t^T\ll
        \phi'(s,\cdot,u^+(s)),\ b^i(s)\partial_{x_i}u^+(s)+c(s)\,u^+(s)+\varsigma^r(s)v^{r,u}(s)\gg ds\\
    &+\int_t^T\ll \phi'(s,\cdot,u^+(s)),\, g^{u}(s)\gg ds
    -\int_t^T\ll \phi'(s,\cdot,u^+(s)),\,v^{r,u}\gg dW_s^r
  \end{split}
\end{equation*}
holds almost surely for all $t\in[0,T]$
where
  \begin{equation*}
    \begin{split}
        &g^{u}(s,x):=1_{\{(s,x):u(s,x)>0\}}(s,x)\left(h^0(s,x)+g(s,x,u(s,x),\nabla u(s,x),v(s,x))\right);\\
        &f^{i,u}(s,x):=1_{\{(s,x):u(s,x)>0\}}(s,x)\left(h^i(s,x)+f^i(s,x,u(s,x),\nabla u(s,x),v(s,x))\right),\\
        &\ i=0,1,\cdots,n;
            \end{split}
  \end{equation*}
  and
  $$
 v^u:=(v^{1,u},\cdots,v^{m,u}), \quad v^{r,u}(s,x):=1_{\{(s,x):u(s,x)>0\}}(s,x)v^r(s,x),\ r=1,\cdots,m.
  $$
\end{prop}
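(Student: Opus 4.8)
The plan is to treat the statement as the analogue, for the positive part $u^{+}$ of a quasi-linear BSPDE solution, of Corollary~\ref{cor after thm solve (1.1)}: just as Corollary~\ref{cor after thm solve (1.1)} lifts the It\^o formula of Lemma~\ref{lem ito foumul} to BSPDE~(\ref{1.1}), I would lift the $u^{+}$-It\^o formula of Lemma~\ref{lem ito for non null bdary condition} to BSPDE~(\ref{1.1}). Concretely, from Definition~\ref{definition of weak solution} the given pair $(u,v)$ solves, in the weak sense, the divergence-form equation~(\ref{eq bspde in ito formula}),
\begin{equation*}
  u(t,\cdot)=G+\int_t^T\bigl(\bar h^0(s,\cdot)+\partial_{x_j}\bar h^j(s,\cdot)\bigr)\,ds-\int_t^Tv^r(s,\cdot)\,dW^r_s,\quad t\in[0,T],
\end{equation*}
with $\bar h^j:=a^{ij}\partial_{x_i}u+\sigma^{jr}v^r+f^j(\cdot,u,\nabla u,v)+h^j$ for $j=1,\dots,n$ and $\bar h^0:=b^i\partial_{x_i}u+c\,u+\varsigma^r v^r+g(\cdot,u,\nabla u,v)+h^0$. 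Since $u\in\cV_{2,0}(Q)\subset W^{1,2}_{\sF}(Q)\cap S^2(L^2(Q))$, $u^+\in\dot\cV_{2,0}(Q)\subset\dot W^{1,2}_{\sF}(Q)$ and $u^+(T)=G^+$, Lemma~\ref{lem ito for non null bdary condition} applied to this equation yields exactly the claimed identity, once one substitutes $\bar h^{i,u}=1_{\{u>0\}}\bar h^i$, $z^{r,u}=1_{\{u>0\}}v^r$ and regroups the truncated terms into $f^{i,u}$, $g^u$ and $v^{r,u}$. As a preliminary, one checks — exactly as in the proof of Corollary~\ref{cor after thm solve (1.1)} — that every term in the assertion is well defined: the structure conditions on $\phi$ give $|\phi'(s,x,r)|\le M|r|$ and $|\partial_{x_i}\phi'(s,x,r)|\le M|r|$, Lemma~\ref{lem emmbedding for space V} gives $u^+\in\cM^{2(n+2)/n}(Q)$, and combining these with $p>n+2$, $q>(n+2)/2$, $(b^i)^2,(\varsigma^r)^2,c\in\cM^q(Q)$, $f_0\in\cM^p(Q)$, $g_0\in\cM^{\frac{p(n+2)}{p+n+2}}(Q)$ and H\"older's inequality (e.g.\ $\int_Q|\phi'(s,\cdot,u^+)|\,|b^i\partial_{x_i}u^+|\le M(\int_Q(u^+)^2|b|^2)^{1/2}(\int_Q|\nabla u^+|^2)^{1/2}$, with $(u^+)^2|b|^2\in\cM^{\mu}$ for some $\mu>1$) bounds each integral.

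The only genuine gap is that Lemma~\ref{lem ito for non null bdary condition} requires $\bar h^0,\bar h^1,\dots,\bar h^n,v\in W^2_{\sF}(Q)$: this holds for $\bar h^1,\dots,\bar h^n$ and $v$ (boundedness of $a,\sigma$; $f_0\in\cM^p\subset\cM^2$; $v\in\cM^2$; $h^i\in\cM^2$), but under $({\mathcal A}3)$ the term $\bar h^0$ need \emph{not} lie in $W^2_{\sF}(Q)$. I would dispose of this in two stages. First, under the additional hypothesis $b,c,\varsigma\in\cL^{\infty}(Q)$ and $g_0\in\cM^2(Q)$ (essentially $({\mathcal A}3)_0$) one has $\bar h^0\in W^2_{\sF}(Q)$, so Lemma~\ref{lem ito for non null bdary condition} applies verbatim — after the routine reduction, as in the proof of that lemma, to $\phi$ with bounded $\phi'$ and $\phi''$. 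For general $({\mathcal A}3)$ I would then introduce the truncations $(b_k,c_k,\varsigma_k,g^k)$ of~\eqref{eq in thm solve (1.1) appxim g} and approximate; since $(u,v)$ is assumed only to lie in $\sU\times\sV$ and not in $\dot\sU\times\dot\sV$, one cannot directly invoke Theorem~\ref{thm solve (1.1)}, and this is where I would borrow the localization device from the proof of Lemma~\ref{lem ito for non null bdary condition}: multiplying by $\varphi\in C_c^\infty(\cO)$ with $\varphi\ge0$ and $\varphi\equiv1$ on a subdomain $\cO'\Subset\cO$ makes $\varphi u$ compactly supported (hence null-Dirichlet) with $(\varphi u)^+=\varphi u^+$, so that the stability estimate~\eqref{estim 1 in thm solv (1.1)} from the proof of Theorem~\ref{thm solve (1.1)} becomes available for the truncated problems.

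One then passes to the limit $k\to\infty$ in the It\^o identities for the approximating solutions just as in the proof of Corollary~\ref{cor after thm solve (1.1)}, using $\|u-u_k\|_{\cV_2(Q)}+\|v-v_k\|_{\cM^2(Q)}\to0$, the bounds $|\phi'(s,\cdot,r)|\le M|r|$ and $|\phi''|\le M$, the embedding of Lemma~\ref{lem emmbedding for space V} (to control the products of $\cM^q$- and $\cM^p$-coefficients with $\cV_2$-functions, and the $\nabla\varphi$-correction terms produced by the localization), and dominated convergence in $L^1(\Omega\times[0,T],\sP)$; finally letting $\cO'\uparrow\cO$ removes the cutoff, and identifying the limiting $1_{\{u>0\}}$-truncated source terms with $f^{i,u}$, $g^u$ and $v^{r,u}$ is routine. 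The main obstacle is precisely this last passage to the limit: reconciling the weakness of the hypothesis $(u,v)\in\sU\times\sV$ with the need for a \emph{uniform} convergence estimate for the truncated equations is what forces the $\varphi$-localization, and keeping track of all the correction terms it generates and proving their $L^1$-convergence via Lemma~\ref{lem emmbedding for space V} is the bulk of the technical work.
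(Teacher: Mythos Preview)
Your overall strategy---localize with $\varphi$, approximate the unbounded coefficients, and invoke the $u^{+}$ It\^o formula---matches the paper's, but you run the two approximations in the opposite order, and this creates a real gap. The paper follows the two-step skeleton of Lemma~\ref{lem ito for non null bdary condition} verbatim, replacing only the use of Lemma~\ref{lem ito foumul} (and Remark~\ref{rmk ito formula 0}) in Step~1 by Corollary~\ref{cor after thm solve (1.1)} (and Remark~\ref{rmk after cor of solve (1.1)}). Concretely: first localize so that $(\varphi u,\varphi v)\in\dot{\sU}\times\dot{\sV}$ for a modified BSPDE of type~(\ref{1.1}), then apply Corollary~\ref{cor after thm solve (1.1)} with test function $\tilde\varphi(x)\psi_k(s)$ to obtain the weak equation for $\psi_k(u)$; the truncation $(b_k,c_k,\varsigma_k,g^k)$ and the passage to the limit in these coefficients are already absorbed inside Corollary~\ref{cor after thm solve (1.1)}, where no indicator functions appear. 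Step~2 is then identical to that of Lemma~\ref{lem ito for non null bdary condition}: apply Lemma~\ref{rmk ito formula} to the equation for $\psi_k(u)$ and let $k\to\infty$ on the \emph{fixed} pair $(u,v)$, so that the indicator $1_{\{u>0\}}$ emerges only at this last stage as the pointwise limit of $\psi_k'(u)$.

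In your plan the $u^{+}$-truncation comes first (Lemma~\ref{lem ito for non null bdary condition} under $({\mathcal A}3)_0$), and only afterwards do you approximate the coefficients by solving the truncated localized problems for $(U_k,V_k)$. The It\^o identity for $U_k^{+}$ then contains the term
\[
\frac12\int_t^T\!\ll\phi''(s,\cdot,U_k^{+}),\ |V_k|^{2}\,1_{\{U_k>0\}}\gg\,ds,
\]
and to pass to the limit you would need $1_{\{U_k>0\}}\to 1_{\{\varphi u>0\}}$ in a sense compatible with the weight $|\varphi v|^{2}$. On the set $\{\varphi u=0\}$ this fails in general: the convergence $U_k\to\varphi u$ in $\cV_{2,0}$ gives no control of $1_{\{U_k>0\}}$ there, and nothing in the hypotheses forces $v$ to vanish on $\{u=0\}$. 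The phrase ``just as in the proof of Corollary~\ref{cor after thm solve (1.1)}'' does not cover this, because that proof concerns $u_k$ rather than $u_k^{+}$ and involves no indicators. (The remaining truncated terms are harmless: $\phi'(s,x,0)=0$, $|\partial_{x_i}\phi'(s,x,r)|\le M|r|$ and $\partial_{x_i}U_k^{+}=1_{\{U_k>0\}}\partial_{x_i}U_k$ absorb the indicators there.) Reversing the order---coefficient approximation first via Corollary~\ref{cor after thm solve (1.1)}, then the $\psi_k$ positive-part approximation on the fixed solution---is precisely what sidesteps the difficulty, and is the paper's route.
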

The proof  is very similar to that of Lemma \ref{lem ito for non null bdary condition} and is omitted here. The main difference lies in
\textbf{Step 1} where we use Corollary \ref{cor after thm solve (1.1)} and Remark \ref{rmk after cor of solve (1.1)} instead of Lemma \ref{lem
ito foumul} and Remark \ref{rmk ito formula 0}.

\section{The maximum  principles}

\subsection{The global case}
\begin{thm}\label{thm max princip global 1}
  Let assumptions $({\mathcal A}1)$--$({\mathcal A}4)$  hold. Assume that $(u,v)\in\cV_{2,0}(Q)\times\cM^2(Q)$ is a weak solution of   (\ref{1.1}).
  Then we have
  \begin{equation}\label{estim thm max principle global 1}
    \esssup_{(\omega,t,x)\in\Omega\times Q}u(\omega,t,x)
    \,\leq C\left\{  \esssup_{(\omega,t,x)\in\Omega\times \partial_{\rm p}Q}u^+(\omega,t,x)
    +A_p(f_0,g_0^+)+\|u^+\|_{2;Q}\right\}
  \end{equation}
  where  $C$ is a constant  depending on
  $n,p,q,\kappa,\lambda,\beta,\varrho,\Lambda_0,L_0,T,|\cO|$ and $L$.
\end{thm}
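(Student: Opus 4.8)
\emph{Strategy.} The proof runs a De Giorgi iteration over the super-level sets of $u$, the energy estimate at each level coming from the It\^o formula of Proposition~\ref{propsition ito non null bdary}. Write $K:=\esssup_{(\omega,t,x)\in\Omega\times\partial_{\rm p}Q}u^+$ and $\Theta:=A_p(f_0,g_0^+)+\|u^+\|_{2;Q}$. If $K+\Theta=0$ then $u\le 0$ a.e.\ on $Q$ and there is nothing to prove, so assume $K+\Theta>0$, and replace $(u,v,f,g,f_0,g_0,G)$ by $\bigl(u/D,\,v/D,\,D^{-1}f(\cdot,D\,\cdot),\,D^{-1}g(\cdot,D\,\cdot),\,f_0/D,\,g_0/D,\,G/D\bigr)$ for a suitable structure-data multiple $D$ of $K+\Theta$; this leaves $({\mathcal A}1)$--$({\mathcal A}4)$ in force with the \emph{same} constants, keeps (\ref{1.1}) of the same form, and reduces us to the case $K\le 1$, $A_p(f_0,g_0^+)\le 1$, $\|u^+\|_{2;Q}\le 1$, in which it suffices to bound $\esssup_{\Omega\times Q}u$ by a structure-data constant.

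\emph{Level energy estimate.} Fix $\ell\ge 1$ and set $k_m:=K+\ell(2-2^{1-m})$, so $k_0=K$, $k_m\uparrow K+2\ell$, $k_m-k_{m-1}=\ell 2^{1-m}$. Since $u\in W^{1,2}_{\sF}(Q)$ with $u\le K$ on the lateral boundary (in the trace sense) and $G\le K$ a.s., the truncations $w_m:=(u-k_m)^+$ lie in $\cV_{2,0}(Q)$ and vanish on $\partial\cO$, hence $w_m\in\dot{\cV}_{2,0}(Q)$ and $(G-k_m)^+=0$. The pair $(u-K,v)$ solves a BSPDE of the form (\ref{1.1}) with terminal value $G-K$ and free terms differing from the original ones only by bounded multiples of $K$, and $(u-K)^+\in\dot{\cV}_{2,0}(Q)$, so Proposition~\ref{propsition ito non null bdary} (via Remark~\ref{rmk after cor of solve (1.1)}, for the non-smoothness of $\phi''$) applies to it with the $(t,x)$-independent test function $\phi$ a smooth approximation of $r\mapsto\bigl((r-(k_m-K))^+\bigr)^2$, for which $\phi'(0)=0$, $\phi''$ is bounded, and $\phi\bigl((u-K)^+\bigr)=w_m^2$. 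Taking $E[\,\cdot\,|\sF_\tau]$ and then $\esssup_\omega\sup_\tau$ in the resulting identity, as in Step~1 of the proof of Theorem~\ref{thm solve (1.1)}: $({\mathcal A}2)$ lets one pick $\theta>\varrho'$ with $\lambda-\kappa-\beta\theta>0$ so that the principal term, the $\sigma$-cross term and the $\tfrac\kappa2|\nabla u|$-, $\beta^{1/2}|v|$-parts of $f$ combine into $-(\lambda-\kappa-\beta\theta)\|\nabla w_m\|_{2;\cO_t}^2+\bigl(\tfrac1\varrho+\tfrac1\theta\bigr)\|1_{\{u>k_m\}}v\|_{2;\cO_t}^2$ with $\tfrac1\varrho+\tfrac1\theta<1$, the last term being absorbed on the left (this control of the \emph{endogenous} $v$ is exactly why one gets an $L^\infty$ rather than an $L^p$ bound); the unbounded $|b|^2,c,|\varsigma|^2\in\cM^q(Q)$ are handled by Young's inequality and the interpolation $\|w_m\|_{\frac{2q}{q-1};\cO_t}\le C\|w_m\|_{\cV_2(\cO_t)}^\alpha\|w_m\|_{2;\cO_t}^{1-\alpha}$, $\alpha=\tfrac{n+2}{2q}\in(0,1)$ (valid because $q>\tfrac{n+2}{2}$, via Lemma~\ref{lem emmbedding for space V}), with $cu$ bounded above by $({\mathcal A}4)$; and $f_0,g_0^+$ are estimated by H\"older using $p>n+2$, leaving powers of the level-set quantity $\mu_m:=\esssup_\omega\sup_sE\bigl[\int_s^T|\{x:u(\tau,x)>k_m\}|\,d\tau\,\big|\sF_s\bigr]$. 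After absorbing $\delta\|w_m\|_{\cV_2(\cO_t)}^2+\delta\|1_{\{u>k_m\}}v\|_{2;\cO_t}^2$ on the left, one obtains, for all $m\ge 0$,
\begin{equation*}
y_m:=\|w_m\|_{\cV_2(Q)}^2+\|1_{\{u>k_m\}}v\|_{2;Q}^2\ \le\ C\bigl(\mu_m^{\theta_1}+k_m^2\mu_m^{\theta_2}\bigr)+C(\Lambda_0,L_0)\,\|w_m\|_{2;Q}^2,
\end{equation*}
with $C$ depending only on the structure data and $\theta_1,\theta_2>0$ on $n,p,q$. At $m=0$ the $\ell$-dependent coefficient in the $c(u-K)$-term vanishes ($k_0-K=0$) and $\mu_0\le T|\cO|$, $\|w_0\|_{2;Q}\le\|u^+\|_{2;Q}\le1$, so $y_0\le C_0$ with $C_0$ a structure-data constant \emph{independent of $\ell$}.

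\emph{Iteration and conclusion.} For $m\ge1$, from $(k_m-k_{m-1})1_{\{u>k_m\}}\le w_{m-1}$ and $\|w_{m-1}\|_{\frac{2(n+2)}{n};Q}\le C\|w_{m-1}\|_{\cV_2(Q)}$ (Lemma~\ref{lem emmbedding for space V}) one gets $\mu_m\le C(2^{m}/\ell)^{2(n+2)/n}y_{m-1}^{(n+2)/n}$ and $\|w_m\|_{2;Q}^2\le Cy_{m-1}\mu_m^{2/(n+2)}$. Substituting into the level estimate, and noting that $p>n+2$ and $q>\tfrac{n+2}{2}$ make each of $\tfrac{n+2}n\theta_1,\tfrac{n+2}n\theta_2,1+\tfrac2n$ strictly exceed $1$, while the bound $k_m\le 3\ell$ turns the $k_m^2$-factor into a gain, one arrives at a recursion $y_m\le \widehat C\,\ell^{-2\nu_0}B^{m}\,y_{m-1}^{1+\nu}$ for $m\ge1$, with $B>1$ and $\nu,\nu_0>0$ depending only on $n,p,q$ and $\widehat C$ on the structure data. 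By the standard De Giorgi iteration lemma, choosing $\ell$ (a structure-data constant) large enough that $y_0\le C_0\le(\widehat C\ell^{-2\nu_0})^{-1/\nu}B^{-1/\nu^2}$ forces $y_m\to0$; since $w_m\downarrow(u-K-2\ell)^+$, this yields $u\le K+2\ell$ a.e.\ on $\Omega\times Q$, a structure-data constant. Undoing the normalization gives (\ref{estim thm max principle global 1}).

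\emph{Main difficulty.} The heart of the matter is the level energy estimate: legitimately differentiating $w_m^2$ along the BSPDE (which forces the boundary shift $u\mapsto u-K$, as $u$ itself need not vanish on $\partial\cO$), absorbing the endogenous diffusion $v$ using \emph{precisely} the spectral gap $\lambda-\kappa-\varrho'\beta>0$ of $({\mathcal A}2)$, carrying the $\esssup_\omega$-conditional-expectation structure of the $\cV_2$- and $\cM^q$-norms through every H\"older and Young step, and arranging the H\"older exponents so that the subcriticality $p>n+2$, $q>\tfrac{n+2}2$ yields a \emph{strictly} superlinear recursion with an $\ell$-dependence ($\ell^{-2\nu_0}$) strong enough that — together with the vanishing of the $\ell$-coefficient at the base level $k_0=K$ and the initial normalization — the final level $K+2\ell$, and hence the estimate, is \emph{linear} in $\esssup_{\partial_{\rm p}Q}u^++A_p(f_0,g_0^+)+\|u^+\|_{2;Q}$, paralleling the deterministic Aronson--Serrin bound $u\le M+C\Xi$.
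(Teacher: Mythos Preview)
Your proof is correct and follows the same overall De Giorgi strategy as the paper, but with a few genuine structural differences worth noting.

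The paper first reduces to $L_0=0$ via the substitution $\tilde u=e^{L_0t}u$, then applies Proposition~\ref{propsition ito non null bdary} directly to $(u-k,v)$ for each level $k\ge\esssup_{\partial_{\rm p}Q}u^+$ with the smooth test function $\phi(r)=r^2$ (so no approximation of $\phi''$ is needed), and iterates not on the energy but on the level-set measure $\psi(h):=|\{u>h\}|_{\infty;\cO_{t_1}}$. Crucially, the paper absorbs the troublesome $\|(u-k)^+\|_{2;\cO_t}^2$ term on the right by first restricting to a short time strip $[t_1,T]$ (chosen so that $C(|T-t_1||\cO|)^{2/(n+2)}\le\tfrac12$), obtaining the clean single-term recursion $\psi(k_{l+1})\le\widehat C\,2^{\alpha(l+1)}\psi(k_l)^{1+\bar\varepsilon}$ on that strip, and then propagating the bound backward over $[0,T]$ by induction on time slices of fixed length.

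Your variant avoids both the $L_0=0$ reduction and the time slicing: you normalize upfront, work globally on $Q$, keep the $\|w_m\|_{2;Q}^2$ term on the right, and feed it back into the iteration via the embedding $\|w_m\|_{2;Q}^2\le Cy_{m-1}\mu_m^{2/(n+2)}$; you then iterate on the energy $y_m$ rather than on $\mu_m$. This is a legitimate and slightly more compact alternative. The cost is that the resulting recursion carries several terms with different superlinear exponents in $y_{m-1}$ and different negative powers of $\ell$ (coming from $\mu_m^{\theta_1}$, $k_m^2\mu_m^{\theta_2}$, and the $\|w_m\|_2^2$ contribution), which you collapse to a single $\widehat C\,\ell^{-2\nu_0}B^m y_{m-1}^{1+\nu}$; this collapse is valid once one checks, by the same induction as in Lemma~\ref{lem degiorgi iteration}, that the minimum superlinear exponent and the minimum $\ell$-decay govern the iteration, but it deserves one sentence of justification. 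The paper's time-slicing route sidesteps this by producing a single-term recursion from the outset, at the price of the extra induction over time strips.
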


\begin{rmk}
   By the inequality $\esssup_{(\omega,t,x)\in\Omega\times \partial_{\rm p}Q}u^+(\omega,t,x)\leq L_1$,
   we  mean that $(u-L_1)^+\in\dot{\cV}_{2,0}(Q)$ and with probability 1,
    for any $\zeta\in C_c^{\infty}(\cO)$, there holds
    $$
    \lim_{t\rightarrow T_-}\ll \zeta,\, (u(t)-L_1)^+\gg\,  =0.$$
\end{rmk}

\begin{rmk}
  In Theorem \ref{thm max princip global 1}, assume further that
  $$\esssup_{(\omega,t,x)\in\Omega\times \partial_{\rm p}Q}|u(\omega,t,x)|\,\leq L_1<\infty . $$
  We have
  $u\in \cL^{\infty}(Q)$ and
  \begin{equation}\label{estim rmk max principle global 1}
    \|u\|_{\infty;Q}
    \,\leq C\left\{  L_1
    +A_p(f_0,g_0)+\|u\|_{2;Q}\right\}
  \end{equation}
  where $C$ is a constant  depending on
  $n,p,q,\kappa,\lambda,\beta,\varrho,\Lambda_0,L_0,T,|\cO|$ and $L$.
\end{rmk}

We start the proof of Theorem \ref{thm max princip global 1}  with borrowing the following lemma either from \cite[Lemma 1.2, Chapter
6]{ChenyazheParabolic} or from \cite[Lemma 5.6, Chapter 2]{Ladyzhenskaia_68}.
\begin{lem}\label{lem degiorgi iteration}
  Let $\{a_k:k=0,1,2,\cdots\}$ be a sequence of nonnegative numbers satisfying
  $$a_{k+1}\leq C_0 b^k a_k^{1+\delta},\,k=0,1,2,\cdots$$
  where $b>1$, $\delta>0$ and $C_0$ is a positive constant. Then
  if $$a_0\leq \theta_0:=C_0^{-\frac{1}{\delta}}b^{-\frac{1}{\delta^2}},$$
  we have $\lim_{k\rightarrow\infty} a_k=0$.
\end{lem}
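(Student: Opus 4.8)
The plan is a short induction establishing geometric decay of the sequence $\{a_k\}$. Set $r:=b^{1/\delta}>1$; I claim that
$$a_k\leq \theta_0\, r^{-k}=\theta_0\, b^{-k/\delta},\qquad k=0,1,2,\dots$$
For $k=0$ this is exactly the hypothesis $a_0\leq\theta_0$. Assuming the bound at step $k$, I would substitute it into the recursion and use the exponent identity $k-(1+\delta)k/\delta=-k/\delta$ to obtain
$$a_{k+1}\leq C_0 b^k a_k^{1+\delta}\leq C_0 b^k\big(\theta_0 b^{-k/\delta}\big)^{1+\delta}=C_0\,\theta_0^{1+\delta}\,b^{-k/\delta}.$$
Thus the inductive step reduces to the single scalar inequality $C_0\,\theta_0^{1+\delta}\leq\theta_0\,b^{-1/\delta}$, equivalently $\theta_0\leq C_0^{-1/\delta}b^{-1/\delta^2}$, which is precisely the definition of $\theta_0$; it holds with equality, so the induction is tight.

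Once the bound $a_k\leq\theta_0\,b^{-k/\delta}$ is available, the conclusion is immediate: since $b>1$ and $\delta>0$ we have $b^{-k/\delta}=(b^{-1/\delta})^{k}\to0$ as $k\to\infty$, and $a_k\geq0$, hence $\lim_{k\to\infty}a_k=0$.

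I do not expect any real obstacle here; the only genuine choice is the ansatz. Looking for $a_k\leq\theta_0 r^{-k}$ with a geometric ratio $r$, matching the powers of $b$ in $a_{k+1}\leq C_0 b^k a_k^{1+\delta}$ forces $r=b^{1/\delta}$, and matching the constants singles out $\theta_0=C_0^{-1/\delta}b^{-1/\delta^2}$ as exactly the largest admissible starting value $a_0$. An equivalent approach is to pass to logarithms, $x_k:=\log a_k$, and solve the affine recursion $x_{k+1}\leq\log C_0+k\log b+(1+\delta)x_k$ explicitly to see that $x_k\to-\infty$; but the direct induction is cleaner and sidesteps the degenerate possibility $a_k=0$, which in any case propagates forward by nonnegativity of the sequence.
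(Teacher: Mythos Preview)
Your proof is correct and is essentially identical to the paper's: both argue by induction that $a_k\leq\theta_0\,\nu^{-k}$ with the geometric ratio $\nu=b^{1/\delta}$, and both verify that the definition of $\theta_0$ is exactly what makes the inductive step close. The presentations differ only cosmetically.
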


\begin{proof}[{\bf Sketch of the proof}]
  We  use the induction principle. It is sufficient to prove the following assertion:
  \begin{equation}\label{eq in lem degiorgi iteration}
    a_k\leq \frac{\theta_0}{\nu^k},\,k=0,1,2,\cdots,
  \end{equation}
   with the parameter $\nu>1$ waiting to be determined later. It is obvious for $k=0$ that \eqref{eq in lem degiorgi iteration} holds. Assume that  \eqref{eq in lem degiorgi iteration} holds for $k=r$. Then we have
  $$a_{r+1}
  \leq C_0 b^r a_r^{1+\delta}
  \leq C_0 b^r \left(\frac{\theta_0}{\nu^r}\right)^{1+\delta}
  = \frac{\theta_0}{\nu^{r+1}} \cdot \frac{C_0b^r\theta_0^{\delta}}{\nu^{r\delta-1}}.
  $$
  Taking $\nu=b^{\frac{1}{\delta}}>1$, we obtain
  $$a_{r+1}\leq \frac{\theta_0}{\nu^{r+1}}\cdot C_0 \nu\theta_0^{\delta}=\frac{\theta_0}{\nu^{r+1}}.$$
\end{proof}
\begin{cor}\label{cor degiorgi iteration}
  Let $\phi:[r_0,\infty)\longrightarrow\bR^+$ be a nonnegative and decreasing function. Moreover, there
  exist constants $C_1>0$, $\alpha>0$ and $\zeta>1$ such that
  for any $l>r>r_0$,
  $$ \phi(l)\leq \frac{C_1}{(l-r)^{\alpha}}\phi(r)^{\zeta}.
      $$
  Then for
   $$d\geq C_1^{\frac{1}{\alpha}}|\phi(r_0)|^{\frac{\zeta-1}{\alpha}}2^{\frac{\zeta}{\zeta-1}},$$
  we have
     $\phi(r_0+d)=0$.
\end{cor}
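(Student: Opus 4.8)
The plan is to deduce the corollary from Lemma \ref{lem degiorgi iteration} by iterating the given estimate along a geometrically contracting sequence of levels. Fix $d$ satisfying the stated lower bound, and for $k=0,1,2,\dots$ set
\[
  r_k := r_0 + d\Big(1-\frac{1}{2^k}\Big),
\]
so that $r_0$ is the base point, $r_k$ strictly increases to $r_0+d$, and $r_{k+1}-r_k = d\,2^{-(k+1)}$. Put $a_k := \phi(r_k)\ge 0$. Applying the hypothesis with $l=r_{k+1}>r=r_k$ (for $k=0$ one applies it with $r=r_0+\eps$ and lets $\eps\downarrow 0$, using that $\phi$ is decreasing) yields
\[
  a_{k+1}\ \le\ \frac{C_1}{(r_{k+1}-r_k)^{\alpha}}\,a_k^{\zeta}
  \ =\ \frac{C_1\,2^{\alpha}}{d^{\alpha}}\,\big(2^{\alpha}\big)^{k}\,a_k^{1+(\zeta-1)}.
\]

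First I would invoke Lemma \ref{lem degiorgi iteration} with $C_0:=C_1\,2^{\alpha}d^{-\alpha}$, $b:=2^{\alpha}>1$ and $\delta:=\zeta-1>0$; its hypothesis $a_0\le\theta_0=C_0^{-1/\delta}b^{-1/\delta^2}$ needs to be checked. A short computation shows that
\[
  \theta_0 \;=\; C_1^{-1/(\zeta-1)}\,d^{\,\alpha/(\zeta-1)}\,2^{-\alpha/(\zeta-1)-\alpha/(\zeta-1)^2},
\]
so the condition $\phi(r_0)=a_0\le\theta_0$ is equivalent, after raising to the power $(\zeta-1)/\alpha$ and using $1+\tfrac{1}{\zeta-1}=\tfrac{\zeta}{\zeta-1}$, to
\[
  d\ \ge\ C_1^{1/\alpha}\,|\phi(r_0)|^{(\zeta-1)/\alpha}\,2^{\zeta/(\zeta-1)},
\]
which is precisely the standing assumption on $d$. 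Hence Lemma \ref{lem degiorgi iteration} applies and gives $a_k\to 0$.

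Finally I would conclude by monotonicity: since $\phi$ is decreasing and $r_k<r_0+d$ for every $k$, we have $0\le\phi(r_0+d)\le\phi(r_k)=a_k\to 0$, whence $\phi(r_0+d)=0$. The construction of the levels $r_k$ and the passage to the limit are routine; the one place deserving attention is matching the normalization of $d$ to the threshold $\theta_0$ of Lemma \ref{lem degiorgi iteration}, i.e. the bookkeeping of the exponents of $C_1$, $d$ and $2$ carried out above.
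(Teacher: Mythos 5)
Your proof is correct and follows essentially the same route as the paper's: the same dyadic levels $r_k=r_0+d(1-2^{-k})$, the same reduction to Lemma \ref{lem degiorgi iteration} with $C_0=C_12^{\alpha}d^{-\alpha}$, $b=2^{\alpha}$, $\delta=\zeta-1$, and the same exponent bookkeeping showing that the hypothesis on $d$ is exactly $\phi(r_0)\le\theta_0$. Your explicit treatment of the endpoint case $r=r_0$ via $r_0+\eps$ and of the final monotonicity step is a small tidying-up of details the paper leaves implicit.
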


\begin{proof}[{\bf Sketch of the proof}]
  Define
  $$
      r_k:=r_0+d-\frac{d}{2^k},k=0,1,2,\cdots.
          $$
  Then
      $$\phi(r_{k+1})
      \leq \frac{C_1 2^{(k+1)\alpha}}{d^{\alpha}}\phi(r_k)^{\zeta}
      =\frac{C_12^{\alpha}}{d^{\alpha}}2^{k\alpha}\phi(r_k)^{\zeta}.$$
      In view of our assumption on $d$, since
       $$
      \phi(r_0)\leq \theta_0=\left(\frac{C_12^{\alpha}}{d^{\alpha}}\right)^{-\frac{1}{\zeta-1}} 2^{-\frac{\alpha}{(\zeta-1)^2}}
      =d^{\frac{\alpha}{\zeta-1}} C_1^{-\frac{1}{\zeta-1}} 2^{-\frac{\alpha \zeta}{(\zeta-1)^2}}
      ,$$
      we deduce from Lemma \ref{lem degiorgi iteration} that $\lim_{k\rightarrow \infty}\phi(r_{k+1})=0$.

\end{proof}

\begin{proof}[{\bf Proof of Theorem \ref{thm max princip global 1}}]
  Assume that $L_0=0$, or else we consider $\tilde{u}(t,x):=e^{L_0 t}u(t,x)$ instead of $u$.
  It is sufficient to prove our theorem for the case
  $$\esssup_{(\omega,t,x)\in\Omega\times \partial_{\rm p}Q}u^+(\omega,t,x) \, <\infty.$$
   Then for  $k\geq \esssup_{(\omega,t,x)\in\Omega\times \partial_{\rm p}Q}u^+(\omega,t,x)$, we have
  $(u-k,v)\in \sU\times\sV(G-k, f^k  , g^k  )$   with
  $$ ( f^k  , g^k  )(\omega,t,x,R,Y,Z):=(f,g)(\omega,t,x,R+k,Y,Z)+(0,c(\omega,t,x)k)
    $$
    for $(\omega,t,x,R,Y,Z)\in \Omega\times [0,T]\times\cO\times\bR\times\bR^n\times\bR^m$.
    From Proposition \ref{propsition ito non null bdary}, we have almost surely
    \begin{equation*}
      \begin{split}
      &\int_{\cO}|(u(t,x)-k)^+|^2\,dx
      +\int_t^T\|v_k(s)\|^2_{L^2(\cO)}\,ds\\
      =&
      -2\int_t^T\ll \partial_{x_j} (u(s)-k)^+,\\
      &~~~~~~~a^{ij}\partial_{x_i} u(s)+\sigma^{jr}v^r_k(s)+ (f^k)^j(s,\cdot,(u(s)-k)^+,\nabla u(s),v_k(s))  \gg ds\\
      &+2\int_t^T\ll (u(s)-k)^+,\ b^i\partial_{x_i}u(s)+c\,(u(s)-k)^+ +\varsigma^r v_k^r(s)\gg ds\\
      &+2\int_t^T\ll (u(s)-k)^+,\  g^k  (s,\cdot,(u(s)-k)^+,\,\nabla u(s),v_k(s))   \gg ds\\
      &-2\int_t^T\ll (u(s)-k)^+,\ v_k^r(s)\gg dW_s^r,\quad \forall \ t\in [0,T]
      \end{split}
    \end{equation*}
    with $v_k:=v 1_{u>k}$.
    Therefore, we have
    \begin{equation}\label{eq ito square in thm glob maxim}
      \begin{split}
        &\int_{\cO}|(u(t,x)-k)^+|^2\,dx
        +E\bigg[ \int_t^T\|v_k(s)\|^2_{L^2(\cO)}\,ds \big|\sF_t\bigg]
        \\
        =&-2E\biggl[\int_t^T \ll \partial_{x_j} (u(s)-k)^+,\quad
      a^{ij}\partial_{x_i} u(s)+\sigma^{jr}v^r_k(s)\\
      &\quad\quad+ (f^k)  ^j(s,\cdot,(u(s)-k)^+,\nabla u(s),v_k(s)) \gg ds     \big|\sF_t\bigg]\\
      &+2E\bigg[\int_t^T\ll (u(s)-k)^+,\ b^i\partial_{x_i}u(s)+c\,(u(s)-k)^+ +\varsigma^r v_k^r(s)\gg ds\big|\sF_t\bigg]\\
      &+2E\bigg[\int_t^T\ll (u(s)-k)^+,\,  g^k  (s,\cdot,(u(s)-k)^+,\nabla u(s),v_k(s)) \gg ds \big|\sF_t\biggr],\,a.s..
      \end{split}
    \end{equation}
    Note that
    \begin{equation}\label{eq 1 in thm glob max}
      \begin{split}
        &\esssup_{\Omega}\!\sup_{\tau\in[t,T]}2E\left[\int_{\tau}^T\ll (u(s)-k)^+,\,  g_0^k (s)\gg ds   \big|\sF_{\tau}\right]\\
        \leq &
        \ 2\|( g_0^k )^+\|_{\frac{p(n+2)}{n+2+p};\cO_t}\|(u-k)^+\|_{\frac{2(n+2)}{n};\cO_t}
        \left|\{u>k\}\right|_{\infty;\cO_t}^{\frac{1}{2}-\frac{1}{p}}~ \textrm{  (H$\ddot{\textrm{o}}$lder inequality)} \\
        \leq &
        \ \delta\|(u-k)^+\|^2_{\frac{2(n+2)}{n};\cO_t}
        +C(\delta)\|( g_0^k )^+\|^2_{\frac{p(n+2)}{n+2+p};\cO_t}
        \left|\{u>k\}\right|_{\infty;\cO_t}^{1-\frac{2}{p}}\\
        \leq &
        \ \delta\|(u-k)^+\|^2_{\frac{2(n+2)}{n};\cO_t}
        +C(\delta,p,L)\left(\left|A_p(f_0,g_0^+)\right|^2+k^2\right)
        \left|\{u>k\}\right|_{\infty;\cO_t}^{1-\frac{2}{p}},
      \end{split}
    \end{equation}
    \begin{equation}\label{eq es linear global}
    \begin{split}
      &\esssup_{\omega\in\Omega}\!\!\sup_{\tau\in[t,T]}
      \!\!2E\left[\int_{\tau}^T\!\!\!\!\! \ll (u(s)-k)^+,\ b^i\partial_{x_i}u(s)+c\,(u(s)-k)^++\varsigma^rv^r_k(s) \gg \!ds
                    \big|\sF_{\tau}\right]\\
      \leq &
      \ C(\eps)\esssup_{\omega\in\Omega}\sup_{\tau\in[t,T]}
      \!E\left[\int_{\tau}^T\!\!\!\! \ll |b(s)|^2+|c(s)|+|\varsigma(s)|^2,\ \left|(u(s)-k)^+\right|^2 \gg \!ds
                    \big|\sF_{\tau}\right]\\
      &+\eps \left(\|\nabla (u-k)^+\|_{2;\cO_t}^2
      +\|v_k\|_{2;\cO_t}^2\right)\\
      \leq&
      \ \eps \left(\|\nabla (u-k)^+\|_{2;\cO_t}^2
      +\|v_k\|_{2;\cO_t}^2\right)
      +C(\eps)
      \Lambda_0 \|(u-k)^+\|^2_{\frac{2q}{q-1};\cO_t} \\
      \leq&
      \ \eps \left(\|\nabla (u-k)^+\|_{2;\cO_t}^2
      +\|v_k\|_{2;\cO_t}^2\right)+\delta \|(u-k)^+\|_{\frac{2(n+2)}{n};\cO_t}^2\\
      &+
      C(\delta,n,q,\eps,\Lambda_0)\|(u-k)^+\|_{2;\cO_t}^2,
    \end{split}
  \end{equation}
   and
    \begin{equation}\label{eq 2 in thm glob max}
      \begin{split}
        &2E\left[\int_t^T\ll |\nabla (u(s)-k)^+|,\,| f_0^k (s)|\gg ds\Big|\sF_t\right]\\
        \leq &
        \ \eps E \left[\int_t^T\|\nabla (u(s)-k)^+\|^2_{L^2(\cO)}\,ds \Big|\sF_t\right]
        +C(\eps)E \left[ \int_t^T\| f_0^k 1_{u>k} (s)\|_{L^2(\cO)}^2\,ds \Big|\sF_t\right] \\
        \leq &
        \ \eps E \left[\int_t^T\|\nabla (u(s)-k)^+\|^2_{L^2(\cO)}\,ds \Big|\sF_t\right]
        +C(\eps)\left|\{u>k\}\right|_{\infty;\cO_t}^{1-\frac{2}{p}}\| f_0^k \|_{p;\cO_t}^2\\
        \leq &
        \ \eps E \left[\int_t^T\|\nabla (u(s)-k)^+\|^2_{L^2(\cO)}\,ds \Big|\sF_t\right]  \\
        &+C(\eps,p,L)\left(\left|A_p(f_0,g_0^+)\right|^2+k^2\right)
        \left|\{u>k\}\right|_{\infty;\cO_t}^{1-\frac{2}{p}},a.s.
      \end{split}
    \end{equation}
    where $\eps $ and $\delta$ are two positive parameters waiting to be determined later and
    $$\left|\{u>k\}\right|_{\infty;\cO_t}:=\esssup_{\Omega}\sup_{\tau\in[t,T]}
        E\left[ |Q_{\tau}\cap \{u>k\}|  \big| {\sF_{\tau}}\right].$$
    In a similar way  to \eqref{eq est g} and \eqref{eq est principle part} in the proof of Theorem \ref{thm solve (1.1)}, we  obtain from \eqref{eq 1 in thm glob max}
    and \eqref{eq 2 in thm glob max} that with probability 1, for all $t\in[0,T]$
    \begin{equation}\label{eq 3 in thm glob max}
      \begin{split}
        &-2E\bigg[\int_t^T \ll \partial_{x_j} (u(s)-k)^+,\quad
      a^{ij}\partial_{x_i} u(s)+\sigma^{jr}v^r_k(s)\\
      &\quad\quad + (f^k)  ^j(s,\cdot,(u(s)-k)^+,\,\nabla u(s),v_k(s))  \gg  ds     \big|\sF_t\bigg]\\
      \leq&
      -(\lambda-\kappa-\beta\theta-\eps)E\bigg[ \int_t^T\|\nabla(u(s)-k)^+\|^2_{L^2(\cO)}\,ds\big|\sF_t  \bigg]\\
      & +\left(\frac{1}{\varrho}+\frac{1}{\theta}\right)E\bigg[\int_t^T\|v_k(s)\|^2_{L^2(\cO)}\,ds \big|\sF_t\bigg]\\
      &+C(\eps,L)E\bigg[\int_t^T\|(u(s)-k)^+\|^2_{L^2(\cO)}\,ds  \big|\sF_t\bigg]\\
      &+2E\bigg[\int_t^T(|\nabla (u(s)-k)^+|,| f_0^k (s)|)\,ds     \big|\sF_t\bigg]                       \\
      \leq&
      -(\lambda-\kappa-\beta\theta-2\eps)E\bigg[ \int_t^T\|\nabla(u(s)-k)^+\|^2_{L^2(\cO)}\,ds\big|\sF_t  \bigg]\\
      &+\left(\frac{1}{\varrho}+\frac{1}{\theta}\right)E\bigg[\int_t^T\|v_k(s)\|^2_{L^2(\cO)}\,ds \big|\sF_t\bigg]\\
      &+C(\eps,L)E\bigg[\int_t^T\|(u(s)-k)^+\|^2_{L^2(\cO)}\,ds  \big|\sF_t\bigg]\\
      &+C(\eps,p,L)\left(\left|A_p(f_0,g_0^+)\right|^2+k^2\right)
        \left|\{u>k\}\right|_{\infty;\cO_t}^{1-\frac{2}{p}}
      \end{split}
    \end{equation}
    and
    \begin{equation}\label{eq 4 in thm glob max}
      \begin{split}
        &\esssup_{\Omega}\sup_{\tau\in[t,T]}2E\bigg[\int_{\tau}^T\ll (u(s)-k)^+,\, g^k  (s,\cdot,(u(s)-k)^+,\nabla u(s),v_k(s))\gg ds \big|\sF_{\tau}\bigg]\\
        &\leq
        \eps\|\nabla (u-k)^+\|_{2;\cO_t}^2+\eps_1\|v_k\|_{2;\cO_t}^2+C(\eps,\eps_1,L)\|(u-k)^+\|^2_{2;\cO_t}\\
        &~~+\esssup_{\Omega}\sup_{\tau\in[t,T]}2E\left[\int_{\tau}^T\ll (u(s)-k)^+,\, g_0^k (s)\gg ds   \big|\sF_{\tau}\right]\\
        &\leq
        \eps\|\nabla (u-k)^+\|_{2;\cO_t}^2+\eps_1\|v_k\|_{2;\cO_t}^2+C(\eps,\eps_1,L)\|(u-k)^+\|^2_{2;\cO_t}\\
        &~~
          +\delta \|(u-k)^+\|^2_{\frac{2(n+2)}{n};\cO_t}+C(L,\delta,p)\left(\left|A_p(f_0,g_0^+)\right|^2+k^2\right)
        \left|\{u>k\}\right|_{\infty;\cO_t}^{1-\frac{2}{p}}
      \end{split}
    \end{equation}
    where $\theta$, $\eps$, $\eps_1$ and $\delta$ are four positive parameters such that
    $$\theta>\frac{\varrho}{\varrho-1}>1,
    \frac{1}{\varrho}+\frac{1}{\theta}+\eps+\eps_1<1
    \textrm{ and } \lambda-\kappa-\beta\theta-4\eps>0.$$
    Combining \eqref{eq ito square in thm glob maxim}, \eqref{eq es linear global}, \eqref{eq 3 in thm glob max} and \eqref{eq 4 in thm glob max}, we have
    \begin{equation}\label{eq estim 1  thm glob max}
      \begin{split}
        &\|(u-k)^+\|^2_{\cV_2(\cO_t)}+\|v_k\|^2_{2;\cO_t}\\
        \leq&
            \ C\bigg\{C(\delta)\|(u-k)^+\|^2_{2;\cO_t}+\delta \|(u-k)^+\|^2_{\frac{2(n+2)}{n};\cO_t} \\
            & \ \ \ \ +C(\delta ) \left(\left|A_p(f_0,g_0^+)\right|^2+k^2\right)
        \left|\{u>k\}\right|_{\infty;\cO_t}^{1-\frac{2}{p}} \bigg\},
      \end{split}
    \end{equation}
    where $C$ is a constant independent of $t$ and $\delta$.

     By Lemma \ref{lem emmbedding for space V}, $\cV_{2,0}(\cO_t)$ is continuously embedded into $\cM^{\frac{2(n+2)}{n}}(\cO_t)$.
     That is $$ \|(u-k)^+\|_{\frac{2(n+2)}{n};\cO_t}\leq C \|(u-k)^+\|_{\cV_2(\cO_t)}. $$
     Therefore, choosing  $\delta$ to be small enough, we obtain
     \begin{equation*}
       \begin{split}
         \|(u-k)^+\|^2_{\frac{2(n+2)}{n};\cO_t}
         \leq&
            C\|(u-k)^+\|^2_{2;\cO_t}+C\left(\left|A_p(f_0,g_0^+)\right|^2+k^2\right)\left|\{u>k\}\right|_{\infty;\cO_t}^{1-\frac{2}{p}}\\
         \leq&
            C(|T-t||\cO|)^{\frac{2}{n+2}}\|(u-k)^+\|^2_{\frac{2(n+2)}{n};\cO_t}\\
            &+C\left(\left|A_p(f_0,g_0^+)\right|^2+k^2\right)\left|\{u>k\}\right|_{\infty;\cO_t}^{1-\frac{2}{p}}.\\
       \end{split}
     \end{equation*}
     Choosing $t_1\in[0,T)$ such that $C(|T-t_1||\cO|)^{\frac{2}{n+2}} \leq \frac{1}{2}$, we get
     \begin{equation*}
       \begin{split}
         \|(u-k)^+\|^2_{\frac{2(n+2)}{n};\cO_{t_1}}
         \leq \,
            C\left(\left|A_p(f_0,g_0^+)\right|^2+k^2\right)\left|\{u>k\}\right|_{\infty;\cO_{t_1}}^{1-\frac{2}{p}}
       \end{split}
     \end{equation*}
     where the constant $C$ does not depend on $t_1$.

     Define
     $$\psi:\bR\longrightarrow \bR,~~~\psi(h)=\left|\{u>h\}\right|_{\infty;\cO_{t_1}}.$$
     Since for any $h>k$,
     $$\|(u-k)^+\|^2_{\frac{2(n+2)}{n};\cO_{t_1}}
        \geq (h-k)^2 \left|\{u>h\}\right|_{\infty;\cO_{t_1}}^{\frac{n}{n+2}},$$
     taking $k\geq A_p(f_0,g_0^+)$ we have
     \begin{equation*}
       \begin{split}
         \psi(h)^{\frac{n}{n+2}}\leq \frac{Ck^2}{(h-k)^2}\psi(k)^{1-\frac{2}{p}}
       \end{split}
     \end{equation*}
     which implies
     \begin{equation}\label{eq 5 in thm glob max}
       \begin{split}
         \psi(h)\leq \frac{Ck^{\alpha}}{(h-k)^{\alpha}}\psi(k)^{1+\bar{\eps}}
       \end{split}
     \end{equation}
     where $\alpha=\frac{2(n+2)}{n}$ and $\bar{\eps}=\frac{2(p-n-2)}{pn}>0$.
     Take $k_l=k(2-2^{-l})$, $l=0,1,2,\cdots.$
     Then from
     $$\psi(k_{l+1})\leq \frac{Ck_l^{\alpha}}{(k_{l+1}-k_l)^{\alpha}}\psi(k_l)^{1+\bar{\eps}},$$
     it follows that
     \begin{equation*}
       \begin{split}
         \psi(k_{l+1})
         \leq
            \hat{C} 2^{\alpha(l+1)}\psi(k_l)^{1+\bar{\eps}}.
       \end{split}
     \end{equation*}
    By Lemma \ref{lem degiorgi iteration}, there exists a constant $\theta_0=\theta_0(\hat{C},\bar{\eps})>0$, such that
    if $\psi(k_0)\leq \theta_0$, $\lim_{l\rightarrow\infty}\psi(k_l)=0$.
    Note that $k_0=k$ and $\psi(k_0)=\left|\{u>k\}\right|_{\infty;\cO_{t_1} }$.

    Taking
    $$k=\esssup_{(\omega,s,x)\in\Omega\times \partial_{\rm p}Q}u^+(\omega,s,x)
    +A_p(f_0,g_0^+)+{\theta_0^{-\frac{1}{2}}}\|u^+\|_{2;\cO_{t_1}},$$
     we have
    \begin{equation*}
      \begin{split}
        k^2
        \geq \frac{1}{\theta_0}\|u^+\|^2_{2;\cO_{t_1}}
        \geq \frac{1}{\theta_0} k^2 \left|\{u>k\}\right|_{\infty;\cO_{t_1}}
      \end{split}
    \end{equation*}
    which implies
    \begin{equation*}
      \begin{split}
        \psi(k_0)=\left|\{u>k\}\right|_{\infty;\cO_{t_1}}\leq \theta_0.
      \end{split}
    \end{equation*}
     Hence, $\psi(k_{\infty})=0$. Since $k_{\infty}=2k$, we obtain
     \begin{equation*}
    \esssup_{(\omega,s,x)\in\Omega\times \cO_{t_1}}\!\!\!u(\omega,s,x)\leq 2k
    = 2\left\{\!  \esssup_{(\omega,s,x)\in\Omega\times \partial_{\rm p}Q}  u^+(\omega,s,x)
    +A_p(f_0,g_0^+)+{\theta_0^{-\frac{1}{2}}}\|u^+\|_{2;Q}\right\}.
  \end{equation*}
  As $T-t_1$ only depends on the structure terms like $n,\lambda,\kappa,\beta,\varrho,p,q,L,\Lambda_0,|\cO|$ and $T$, by induction, we get estimate \eqref{estim thm max principle global 1}.
\end{proof}

\begin{thm}\label{thm max princip global 2}
  Let assumptions $({\mathcal A}1)$--$({\mathcal A}4)$ be satisfied and  $(u,v)\in\cV_{2,0}(Q)\times\cM^2(Q)$ be a weak solution of   (\ref{1.1}). If
  $L_0=0$ and with probability 1
  \begin{equation}\label{condition in thm max princip 2}
    \begin{split}
      f(t,x,R,0,0)\equiv f(t,x,0,0)
       \textrm{ and }
       g(t,x,R,0,0) \textrm{ are decreasing in }  R\in \bR
    \end{split}
  \end{equation}
  for all $(t,x)\in [0,T]\times\bR^n$,
  then we assert
  \begin{equation}\label{estim in thm max princip global 2}
    \begin{split}
        \esssup_{(\omega,t,x)\in\Omega\times Q}u(\omega,t,x)
        \leq   \esssup_{(\omega,t,x)\in\Omega\times \partial_{\rm p}Q}u^+(\omega,t,x)
        +C A_p(f_0,g_0^+) |\cO|^{\frac{1}{n+2}-\frac{1}{p}}
    \end{split}
  \end{equation}
  with the constant $C$ only depending on
  $n,p,q,\kappa,\lambda,\beta,\varrho,T,\Lambda_0$ and $L$.
\end{thm}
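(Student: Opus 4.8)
The plan is to run the De Giorgi iteration of the proof of Theorem~\ref{thm max princip global 1}, exploiting the structural condition \eqref{condition in thm max princip 2} together with $L_0=0$ to erase the truncation level $k$ from the inhomogeneous part of the energy estimate. Once $k$ has disappeared there, the auxiliary term $\|u^+\|_{2;Q}$ is no longer needed: the iteration can be launched directly from $r_0:=K_0$, where $K_0:=\esssup_{(\omega,t,x)\in\Omega\times\partial_{\rm p}Q}u^+$, and the domain $\cO$ will enter only through the crude bound $|\{u>K_0\}|_{\infty;Q}\le|Q|$ on the initial superlevel--set measure, which is precisely what produces the factor $|\cO|^{\frac1{n+2}-\frac1p}$.

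First I would reduce to $K_0<\infty$. For $k\ge K_0$ the pair $(u-k,v)$ is a weak solution of the shifted equation, lies in $\sU\times\sV(G-k,f^k,g^k)$ with $(f^k,g^k)(\omega,t,x,R,Y,Z)=(f,g)(\omega,t,x,R+k,Y,Z)+(0,c(\omega,t,x)k)$, satisfies $G-k\le 0$ on $\{T\}\times\cO$, and obeys $(u-k)^+\in\dot{\cV}_{2,0}(Q)$ (see the Remark following Theorem~\ref{thm max princip global 1}). Applying Proposition~\ref{propsition ito non null bdary} with $\phi(t,x,r)=|r^+|^2$, exactly as in the derivation of \eqref{eq ito square in thm glob maxim}, gives the conditional $L^2$--identity for $\|(u(t)-k)^+\|_{L^2(\cO)}^2+E[\int_t^T\|v_k(s)\|_{L^2(\cO)}^2\,ds\,|\,\sF_t]$, with $v_k:=v\,1_{\{u>k\}}$.

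The decisive step is the treatment of the inhomogeneous terms. On $\{u>k\}$ one has $(u-k)^+=u-k>0$, $\nabla(u-k)^+=\nabla u$ and $v_k=v$, so the truncated nonlinearities become $g(s,x,u,\nabla u,v)+c\,k$ and $f^j(s,x,u,\nabla u,v)$. Splitting off the ``frozen'' parts, $g(s,x,u,\nabla u,v)=[g(s,x,u,\nabla u,v)-g(s,x,u,0,0)]+g(s,x,u,0,0)$ and $f^j(s,x,u,\nabla u,v)=[f^j(s,x,u,\nabla u,v)-f^j(s,x,u,0,0)]+f^j(s,x,u,0,0)$, the brackets are bounded in modulus by $L(|\nabla(u-k)^+|+|v_k|)$ and $\tfrac\kappa2|\nabla(u-k)^+|+\beta^{1/2}|v_k|$ by $({\mathcal A}1)$, while by \eqref{condition in thm max princip 2} and $u>k\ge 0$ one has $g(s,x,u,0,0)\le g_0(s,x)\le g_0^+(s,x)$ and $f^j(s,x,u,0,0)=f_0^j(s,x)$; finally $c\,k\le 0$ since $c\le L_0=0$, and the purely linear contribution $2\ll(u-k)^+,\,c(u-k)^+\gg=2\ll c,\,|(u-k)^+|^2\gg\le 0$ is simply discarded. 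Hence \emph{no power of $k$ survives in the inhomogeneity}. Running the chain \eqref{eq 1 in thm glob max}--\eqref{eq 4 in thm glob max} with every occurrence of ``$+k^2$'' deleted (the terms $|b|^2$, $|\varsigma|^2$ being absorbed through $\Lambda_0$, Young's inequality with small parameters, and the interpolation of Lemma~\ref{lem emmbedding for space V}), then using the elementary bound $\|(u-k)^+\|_{2;\cO_t}^2\le\int_t^T\|(u(s)-k)^+\|_{\cV_2(\cO_s)}^2\,ds$ followed by Gronwall's inequality (as in \eqref{eq gronwall in solve}--\eqref{estim 1 in thm solv (1.1)}), I would obtain, for every $k\ge K_0$,
\[
\|(u-k)^+\|_{\cV_2(Q)}^2\le C\,|A_p(f_0,g_0^+)|^2\,\psi(k)^{1-2/p},\qquad \psi(k):=|\{u>k\}|_{\infty;Q},
\]
with $C$ depending only on $n,p,q,\kappa,\lambda,\beta,\varrho,\Lambda_0,L,T$ --- in particular independent of $k$ and of $|\cO|$; note that, in contrast to Theorem~\ref{thm solve (1.1)}, the volume is not absorbed into $C$ here because the superlevel--set measure $\psi(k)$ is carried along separately throughout.

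Finally, Lemma~\ref{lem emmbedding for space V} upgrades this to $\|(u-k)^+\|_{\frac{2(n+2)}{n};Q}^2\le C|A_p(f_0,g_0^+)|^2\psi(k)^{1-2/p}$, and since $\|(u-k)^+\|_{\frac{2(n+2)}{n};Q}^2\ge(h-k)^2\psi(h)^{n/(n+2)}$ for $h>k$, this yields
\[
\psi(h)\le\frac{C_1\,|A_p(f_0,g_0^+)|^{\alpha}}{(h-k)^{\alpha}}\,\psi(k)^{1+\bar\eps},\qquad h>k\ge K_0,\quad \alpha=\tfrac{2(n+2)}{n},\quad \bar\eps=\tfrac{2(p-n-2)}{pn}>0.
\]
Corollary~\ref{cor degiorgi iteration}, applied with $r_0=K_0$, $\phi=\psi$, $\zeta=1+\bar\eps$ and constant $C_1|A_p(f_0,g_0^+)|^{\alpha}$, then gives $\psi(K_0+d)=0$ as soon as $d\ge C_1^{1/\alpha}|A_p(f_0,g_0^+)|\,\psi(K_0)^{\bar\eps/\alpha}\,2^{(1+\bar\eps)/\bar\eps}$; since $\bar\eps/\alpha=\frac1{n+2}-\frac1p$ and $\psi(K_0)\le|Q|=T|\cO|$, this is exactly \eqref{estim in thm max princip global 2} (the factor $T^{\frac1{n+2}-\frac1p}$ absorbed into $C$). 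I expect the main obstacle to be the third paragraph: one must check that \eqref{condition in thm max princip 2} and $L_0=0$ together remove \emph{every} occurrence of $k$ from the inhomogeneous part of the energy inequality --- this is precisely what separates Theorem~\ref{thm max princip global 2} from Theorem~\ref{thm max princip global 1} --- and that the resulting inequality can be closed globally on $Q$ by Gronwall rather than slab by slab (which would make $C$ depend on $|\cO|$), so that $|\cO|$ enters only through $\psi(K_0)\le|Q|$.
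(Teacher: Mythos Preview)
Your core identification is exactly the paper's: condition \eqref{condition in thm max princip 2} together with $L_0=0$ forces $g^k_0\le g_0^+$ and $f^k_0=f_0$ on $\{u>k\}$, so the $k^2$ disappears from the inhomogeneous part of the energy inequality, and the De Giorgi recursion becomes $\psi(h)\le C|A_p(f_0,g_0^+)|^{\alpha}(h-k)^{-\alpha}\psi(k)^{1+\bar\eps}$, to which Corollary~\ref{cor degiorgi iteration} applies directly. The paper records this as equations \eqref{eq 1 in thm glob max 2}--\eqref{eq 2 in thm glob max 2} replacing \eqref{eq 1 in thm glob max}--\eqref{eq 2 in thm glob max}.

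Where you diverge is in how the residual lower-order term $\|(u-k)^+\|^2_{2;\cO_t}$ is absorbed. The paper keeps the slab structure of Theorem~\ref{thm max princip global 1}: it uses H\"older to bound $\|(u-k)^+\|^2_{2;\cO_t}\le(|T-t||\cO|)^{2/(n+2)}\|(u-k)^+\|^2_{2(n+2)/n;\cO_t}$, chooses $t_1$ so this is absorbable, applies Corollary~\ref{cor degiorgi iteration} on $\cO_{t_1}$ with $\psi(K_0)\le|\cO_{t_1}|$, and then inducts over slabs. You instead use $\|(u-k)^+\|^2_{2;\cO_t}\le\int_t^T\|(u-k)^+\|^2_{\cV_2(\cO_s)}\,ds$ and close once on all of $Q$ by Gronwall, then run the iteration globally with $\psi(K_0)\le|Q|$. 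Both routes are correct; yours is a bit more direct and makes the claimed independence of $C$ from $|\cO|$ completely transparent (the domain enters only through $\psi(K_0)\le T|\cO|$), whereas in the paper's route the slab width $T-t_1$ itself depends on $|\cO|$ and the induction bookkeeping needed to recover the stated $|\cO|^{\frac1{n+2}-\frac1p}$ factor is left to the reader.
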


\begin{proof}[{\bf Proof}]
  We use  De Giorgi iteration and the same notations
  in the proof of Theorem \ref{thm max princip global 1}. Similar to the proof of \eqref{eq 1 in thm glob max}
  and \eqref{eq 2 in thm glob max},
  under condition \eqref{condition in thm max princip 2},
  we have for each $t\in [0,T]$,
  \begin{equation}\label{eq 1 in thm glob max 2}
      \begin{split}
        &\esssup_{\Omega}\sup_{\tau\in[t,T]}2E\left[\int_{\tau}^T\ll (u(s)-k)^+,\,\   g_0^k (s)\gg ds   \big|\sF_{\tau}\right]\\
        \leq &
        \esssup_{\Omega}\sup_{\tau\in[t,T]}2E\left[\int_{\tau}^T\ll (u(s)-k)^+,\,\ g_0(s)\gg ds   \big|\sF_{\tau}\right]\\
        \leq &
        \delta\|(u-k)^+\|^2_{\frac{2(n+2)}{n};\cO_t}
        +C(\delta)\left|A_p(f_0,g_0^+)\right|^2
        \left|\{u>k\}\right|_{\infty;\cO_t}^{1-\frac{2}{p}}
      \end{split}
    \end{equation}
    and almost surely
    \begin{equation}\label{eq 2 in thm glob max 2}
      \begin{split}
        &2E\left[\int_t^T\ll |\nabla (u(s)-k)^+|,\,| f_0^k (s)|\gg ds\Big| \sF_t\right]\\
        =&
        2E\left[\int_t^T\ll |\nabla (u(s)-k)^+|,\, |f_0(s)|\gg ds \Big| \sF_t\right]\\
        \leq &
        \eps E\left[\int_t^T\|\nabla (u(s)-k)^+\|^2_{L^2(\cO)}\, ds \Big| \sF_t\right]
        +C(\eps)\left|A_p(f_0,g_0^+)\right|^2
        \left|\{u>k\}\right|_{\infty;\cO_t}^{1-\frac{2}{p}}.
      \end{split}
    \end{equation}
    Hence instead of \eqref{eq 5 in thm glob max}, we obtain
    \begin{equation*}
      \psi(h)\leq \frac{C\left|A_p(f_0,g_0^+)\right|^{\alpha}}{(h-k)^{\alpha}}\psi(k)^{1+\bar{\eps}}.
    \end{equation*}
    By Corollary \ref{cor degiorgi iteration}, for any $\bar{\theta}_0\geq C A_p(f_0,g_0^+)|\cO_{t_1}|^{\frac{1}{n+2}-\frac{1}{p}}$,
    we have
    \begin{equation}
      \begin{split}
        \left|\left\{u>\esssup_{(\omega,t,x)\in\Omega\times \partial_{\rm p}Q}u^+(\omega,t,x)
        +\bar{\theta}_0    \right\}\right|_{\infty;\cO_{t_1}}=0,
      \end{split}
    \end{equation}
    which implies
    \begin{equation}
      \begin{split}
        \sup_{(\omega,t,x)\in\Omega\times \cO_{t_1}}u
        \,\leq
        \sup_{(\omega,t,x)\in\Omega\times \partial_{\rm p}Q}u^++CA_p(f_0,g_0^+)|\cO_{t_1}|^{\frac{1}{n+2}-\frac{1}{p}}
      \end{split}
    \end{equation}
    where the constant $C$ depends only on $n,\lambda,p,q,\beta,\kappa,\varrho,\Lambda_0$ and $L$.
    As $T-t_1$ only depends on the structure terms, by induction, we get estimate \eqref{estim in thm max princip global 2} where the constant $C$ also depends on $T$. We complete the proof.
\end{proof}

\begin{rmk}
  In Theorem \ref{thm max princip global 2},   we can dispense with the assumptions that $L_0=0$ and  the function
  $r \mapsto g(t,x,r,0,0)$ decreases in $r$, by considering the function
   $\tilde{u}:=e^{2(L+L_0)t}u(t,x)$ instead of $u$.
\end{rmk}

\begin{cor}
  Let assumptions $({\mathcal A}1)$--$({\mathcal A}4)$  be satisfied with $L_0=0$.
  Let the two pair $(f,g^1)$ and $(f,g^2)$  satisfy condition \eqref{condition in thm max princip 2} in Theorem \ref{thm max princip global 2}.
  Assume that $G^1$ and $G^2$ are two random variables in $L^{\infty}(\Omega,\sF_T,L^2(\cO))$.
  Let $(u_i,v_i)\in \sU\times\sV(G^i,f,g^i)$, $i=1,2$ and  $(u_1-u_2)^+\in \dot{\cV}_{2,0}(Q)$. Then
  if $G^1\leq G^2$ $dP\otimes dx$-a.e. and $g^1(\omega,t,x,u_2,\nabla u_2,v_2)\leq g^2(\omega,t,x,u_2,\nabla u_2,v_2), dP\otimes dt\otimes dx$-a.e.,  we have $u_1(\omega,t,x)\leq u_2(\omega,t,x)$, $dP\otimes dt\otimes dx$-a.e..
\end{cor}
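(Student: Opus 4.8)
The plan is to apply the energy estimate behind the global maximum principle to the difference $\bar u:=u_1-u_2$, exploiting that the inhomogeneous ``source'' of the backward equation it satisfies vanishes under the stated sign conditions, so that a single Gronwall argument — rather than the full De Giorgi iteration of Theorems \ref{thm max princip global 1}--\ref{thm max princip global 2} — does the job.

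First I would put $\bar v:=v_1-v_2$; then $\bar u\in\cV_{2,0}(Q)$, $\bar v\in\cM^2(Q)$, and subtracting the weak formulation \eqref{eq in defn of weak sol} for $(u_2,v_2)$ from that for $(u_1,v_1)$ shows that $(\bar u,\bar v)$ is a weak solution of BSPDE \eqref{1.1} with the same $a,\sigma,b,c,\varsigma$, terminal datum $G^1-G^2$, and nonlinear coefficients
\begin{equation*}
  \begin{split}
    \bar f(t,x,R,Y,Z):=&\,f(t,x,R+u_2,Y+\nabla u_2,Z+v_2)-f(t,x,u_2,\nabla u_2,v_2),\\
    \bar g(t,x,R,Y,Z):=&\,g^1(t,x,R+u_2,Y+\nabla u_2,Z+v_2)-g^2(t,x,u_2,\nabla u_2,v_2).
  \end{split}
\end{equation*}
By $({\mathcal A}1)$ both $\bar f,\bar g$ retain the Lipschitz constants $L,\kappa,\beta$, and the hypotheses of the corollary yield the three facts driving everything: $\bar f_0:=\bar f(\cdot,\cdot,\cdot,0,0,0)=0$; $\bar g_0:=\bar g(\cdot,\cdot,\cdot,0,0,0)=g^1(\cdot,u_2,\nabla u_2,v_2)-g^2(\cdot,u_2,\nabla u_2,v_2)\le0$ $dP\otimes dt\otimes dx$-a.e.; and $\bar u^+(T)=(G^1-G^2)^+=0$; to which one adds $c\le L_0=0$. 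Since $\bar u^+\in\dot{\cV}_{2,0}(Q)$ is assumed, and $u_2,\nabla u_2,v_2\in\cM^2(Q)$ (indeed $\cV_{2,0}(Q)\subset\cM^2(Q)$), a routine inspection of the integrability exponents — writing $\bar g=(\bar g-\bar g_0)+\bar g_0$ and feeding $\bar g_0$ into the $g$-slot, or into the inhomogeneity $h^0$ when $p(n+2)/(p+n+2)>2$ — places $(\bar u,\bar v)$ within the scope of Proposition \ref{propsition ito non null bdary}.

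I would then apply Proposition \ref{propsition ito non null bdary} to $(\bar u,\bar v)$ with $\phi(t,x,R)=R^2$, obtaining an It\^o identity for $\|\bar u^+(t)\|_{L^2(\cO)}^2+\int_t^T\|\bar v^{\bar u}(s)\|_{L^2(\cO)}^2\,ds$ (with $\bar v^{r,\bar u}:=1_{\{\bar u>0\}}\bar v^r$) whose terminal term vanishes. Taking $E[\,\cdot\,|\sF_t]$ and then $\esssup_\omega\sup_{\tau\in[t,T]}$, I would estimate the right-hand side exactly as in the proofs of Theorem \ref{thm solve (1.1)} and Theorem \ref{thm max princip global 1}: the principal part is coercive by $({\mathcal A}2)$ after absorbing $\sigma^{jr}\bar v^{r,\bar u}$ through the $\varrho$-splitting; the nonpositive terms $2\ll c,(\bar u^+)^2\gg$ and $2\ll\bar u^+,1_{\{\bar u>0\}}\bar g_0\gg$ are discarded; the rest of $\bar f,\bar g$ is Lipschitz-bounded purely in terms of $\bar u^+,|\nabla\bar u^+|,|\bar v^{\bar u}|$, leaving no free term because $\bar f_0=0$; and the possibly unbounded $b,c,\varsigma$ are absorbed by H\"older's inequality together with the Gagliardo--Nirenberg interpolation of Lemma \ref{lem emmbedding for space V} applied to $\bar u^+\in\dot{\cV}_{2,0}(Q)$. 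Choosing the small parameters suitably, absorbing a $\delta$-multiple of $\|\bar u^+\|_{\cV_2(\cO_t)}^2$ into the left side, and using $\|\bar u^+\|_{2;\cO_t}^2\le\int_t^T\|\bar u^+(s)\|_{\cV_2(\cO_s)}^2\,ds$, one reaches, with \emph{no} inhomogeneous contribution,
$$\|\bar u^+\|_{\cV_2(\cO_t)}^2+\|\bar v^{\bar u}\|_{2;\cO_t}^2\le C\int_t^T\|\bar u^+(s)\|_{\cV_2(\cO_s)}^2\,ds,\qquad t\in[0,T],$$
whence Gronwall's inequality forces $\|\bar u^+\|_{\cV_2(Q)}^2=0$, that is $u_1\le u_2$ for $dP\otimes dt\otimes dx$-almost every $(\omega,t,x)$.

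I do not expect a real obstruction, only two points needing care. One is the standard bookkeeping that absorbs the unbounded lower-order coefficients $b,c,\varsigma$ into the $\cV_2$-norm, which is verbatim the corresponding step of Theorem \ref{thm solve (1.1)}. The other is the verification that the difference equation stays admissible: $\bar g_0$ contains $\nabla u_2$ and $v_2$, which lie only in $\cM^2(Q)$, so one must route $\bar g_0$ through the coefficient slot carrying the weaker integrability requirement in order to legitimately invoke Proposition \ref{propsition ito non null bdary}. The sign hypotheses $G^1\le G^2$, $c\le0$ and $g^1(\cdot,u_2,\nabla u_2,v_2)\le g^2(\cdot,u_2,\nabla u_2,v_2)$ are used precisely at the three places where a nonpositive or vanishing term is identified, and no De Giorgi iteration is necessary because we truncate only at level $0$ and hence no $k$-dependent source ever appears.
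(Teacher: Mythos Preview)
Your setup of the difference equation for $(\bar u,\bar v)$ and the identification of the three sign conditions ($\tilde f_0=0$, $\tilde g_0\le 0$, $\tilde G^+=0$) coincides exactly with the paper's proof. From there, however, the paper simply invokes Theorem~\ref{thm max princip global 2}: since $A_p(\tilde f_0,\tilde g_0^+)=0$ and $\esssup_{\partial_{\rm p}Q}\bar u^+=0$, estimate~\eqref{estim in thm max princip global 2} gives $\bar u\le 0$ in one line. You instead bypass Theorem~\ref{thm max princip global 2} altogether and run the energy estimate of Proposition~\ref{propsition ito non null bdary} at the single level $k=0$, closing with a Gronwall argument rather than the De Giorgi iteration. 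Your route is genuinely more elementary---you correctly observe that no iteration over levels $k>0$ is needed when the source terms already vanish at $k=0$---and it has the side benefit that you never need condition~\eqref{condition in thm max princip 2} to hold for the \emph{shifted} functions $\tilde f,\tilde g$, only the vanishing of $\tilde f_0$ and the nonpositivity of $\tilde g_0$. You also flag, and handle via the $h^0$-slot, the integrability issue for $\tilde g_0$ (which contains $\nabla u_2,v_2\in\cM^2$ and so need not lie in $\cM^{p(n+2)/(p+n+2)}$); the paper's one-line citation of Theorem~\ref{thm max princip global 2} passes over this point silently.
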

\begin{proof}
  $(u_1-u_2,v_1-v_2)$ belongs to $\sU\times\sV(\tilde{G},\tilde{f},\tilde{g})$ with
  \begin{equation*}
    \begin{split}
      \tilde{f}(s,x,R,Y,Z):=\, & f(s,x,R+u_2(s,x),Y+\nabla u_2(s,x),Z+v_2(s,x))\\
      &-f(s,x,u_2(s,x),\nabla u_2(s,x),v_2(s,x)),\\
      \tilde{g}(s,x,R,Y,Z):=\, &g^1(s,x,R+u_2(s,x),Y+\nabla u_2(s,x),Z+v_2(s,x))\\
      &-g^2(s,x,u_2(s,x),\nabla u_2(s,x),v_2(s,x))
    \end{split}
  \end{equation*}
  and  $\tilde{G}:=G^1-G^2$. Since $\tilde{G}\leq 0$, $\tilde{g}_0\leq 0$ and $f_0=0$, the assertion follows from Theorem \ref{thm max princip global 2}.
\end{proof}

\subsection{The local case}
This subsection is devoted to the local  regularity of weak solutions.

\begin{defn}
  For domain $Q'\subset Q$, a function $\zeta(\cdot,\cdot)$ is called a cut-off function on $Q'$ if

  (i) $\zeta\in \dot{W}_1^{2,2}(Q')$, i.e. there exists a sequence $\{\zeta^l,l\in\bN\}\subset C_c^{\infty}(Q')$ such that
  \begin{equation}
    \begin{split}
      \|\zeta^l-\zeta\|_{W_1^{2,2}}:=
      \bigg\{\int_{Q'}\Big(& |(\zeta^l-\zeta)(t,x)|^2+|\partial_{t}(\zeta^l-\zeta)(t,x)|^2\\
      &+
        |\nabla (\zeta^l-\zeta)(t,x)|^2+|\nabla^2(\zeta^l-\zeta)(t,x)|^2\Big)\,dxdt \bigg\}^{\frac{1}{2}}
            \end{split}
  \end{equation}
  converges to zero as $l$ tends to infinity with $\nabla^2(\zeta^l-\zeta)(t,x)$ being the Hessian matrix of the function $(\zeta^l-\zeta)(t, \cdot)$ at $x$;

  (ii) $0\leq \zeta \leq 1 $;

  (iii) there exists a  domain  $ Q''\Subset Q'$ and a nonempty domain $Q'''\Subset Q''$ such that
     \begin{equation*}
    \zeta(t,x)=
  \left\{\begin{array}{l}
    \begin{split}
        &1,\quad (t,x)\in Q''',\\
        &0,\quad (t,x)\in Q'\setminus Q'';
    \end{split}
  \end{array}\right.
  \end{equation*}

  (iv) $|\nabla \zeta|,\partial_{t}\zeta \in L^{\infty}(Q')$.

  For simplicity, we denote
  $$ \|\nabla \zeta\|_{L^{\infty}(Q')}:=\| |\nabla \zeta|  \|_{L^{\infty}(Q')}. $$
\end{defn}

First, to study the local behavior of our weak solutions, we shall generalize the deterministic parabolic De Giorgi class (c.f. \cite{ChenyazheParabolic,Ladyzhenskaia_68,Lieberman,Wangguanglei_Harnack}) to our stochastic version and introduce the definition of
 De Giorgi class in the backward stochastic parabolic case.
 \begin{defn}
   We say that a function $u\in\cV_{2,0}(Q)$ belongs to the backward stochastic parabolic De Giorgi class (BSPDG, for short)
   if for any $k\in\bR$, $Q_{\rho,\tau}:=[t_0-\tau,t_0)\times B_{\rho}(x_0) \subset Q$ (with $\rho,\tau\in (0,1)$)
   and any cut-off function $\zeta$ on $Q_{\rho,\tau}$, we have
   \begin{equation*}
  \begin{array}{l}
    \begin{split}
           &\|\zeta(u-k)^{\pm}\|^2_{\cV_{2}(Q_{\rho,\tau})}\\
       \leq
       &\gamma\Big\{\|(u-k)^{\pm}\|^2_{2;Q_{\rho,\tau}}
       (1+\|\nabla\zeta\|_{L^{\infty}(Q_{\rho,\tau})}^2
            +\|\partial_{t}\zeta\|_{L^{\infty}(Q_{\rho,\tau})}  )\\
       &\quad +(k^2+a_0^2)|\{ (u-k)^{\pm}>0\}|_{\infty;Q_{\rho,\tau}}^{1-\frac{2}{\mu}}   \Big\}
    \end{split}
  \end{array}
  ~~~~~~~(\mathfrak{D}^{\pm})
  \end{equation*}
  for some triplet $(a_0, \mu, \gamma) \in [0, \infty)\times (n+2, \infty)\times [0, \infty)$.
  We  call $a_0,\mu,$ and $\gamma$ the structural parameters of $BSPDG^{\pm}$. We mean that $u\in\cV_{2,0}(Q)$
  satisfies $(\mathfrak{D}^{+})$ ($(\mathfrak{D}^-)$, respectively) by the inclusion $u\in BSPDG^+(a_0, \mu, \gamma;Q)$
  ($u\in BSPDG^-(a_0, \mu, \gamma;Q)$, respectively). We say $u\in BSPDG(a_0, \mu, \gamma;Q)$ if both inclusions $u\in BSPDG^+(a_0, \mu, \gamma;Q)$
  and $u\in BSPDG^-(a_0, \mu, \gamma;Q)$ hold.
 \end{defn}

 \begin{prop}\label{prop verifying BSPDG}
   Let assumptions $({\mathcal A}1)$--$({\mathcal A}3)$  hold. Assume that $(u,v)\in\cV_{2,0}(Q)\times\cM^2(Q)$
   is a weak solution of   (\ref{1.1}). Then we assert that $u\in BSPDG(a_0, \mu, \gamma;Q)$,
   with $a_0:=A_p(f_0,g_0), \mu :=\min\{p,2q\}$,  and some parameter $\gamma $   depending on
  $n,p,q,\kappa,\lambda,\beta,$ $\varrho,\Lambda,\Lambda_0$ and $L$.
 \end{prop}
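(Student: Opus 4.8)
The plan is to test the weak formulation of BSPDE~(\ref{1.1}) with a truncated function adapted to a cut-off $\zeta$, using the It\^o formula of Proposition~\ref{propsition ito non null bdary} applied to a suitable $\phi$. More precisely, fix $Q_{\rho,\tau}=[t_0-\tau,t_0)\times B_\rho(x_0)\subset Q$, a level $k\in\bR$, and a cut-off function $\zeta$ on $Q_{\rho,\tau}$; write $w:=(u-k)^+$ (the case $(u-k)^-$ is symmetric, applied to $-u$, which solves a BSPDE of the same structural type). Extend $\zeta$ by $0$ to all of $Q$. The idea is to apply the It\^o formula with $\phi(s,x,r)=\zeta^2(s,x)\,r^2$ after a routine smoothing of $\zeta$ in $r$ near $0$ so that $\phi$ meets the regularity hypotheses of Lemma~\ref{lem ito foumul}; the contribution of $\phi$ to the left side is $\int_\cO \zeta^2(t,x)\,w^2(t,x)\,dx$ plus $\tfrac12\int_t^T\ll \zeta^2(s,\cdot),\,|v^u(s)|^2\gg ds$, and on the right we pick up $-\int_t^T\!\!\int_\cO \partial_s(\zeta^2)\,w^2\,dxds$ together with the terms coming from $a,\sigma,b,c,\varsigma,f,g$ evaluated on $\{u>k\}$.

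Next I would take the essential supremum over $\omega$ and the supremum over the initial time, exactly as in the proof of Theorem~\ref{thm solve (1.1)} and Theorem~\ref{thm max princip global 1}. The main ellipticity term $-\int \ll \partial_{x_j}(\zeta^2 w),\,(2a^{ij}-\varrho\sigma^{ir}\sigma^{jr})\partial_{x_i}u\gg$ is handled by $({\mathcal A}2)$ after expanding $\partial_{x_j}(\zeta^2 w)=\zeta^2\partial_{x_j}w+2\zeta\partial_{x_j}\zeta\,w$ and using Young's inequality to absorb the cross terms into $\|\zeta\nabla w\|_{2;Q_{\rho,\tau}}^2$ at the cost of $\|\nabla\zeta\|_{L^\infty}^2\|w\|_{2;Q_{\rho,\tau}}^2$; the $v^u$-cross-terms are controlled by the spectral gap $\lambda-\kappa-\varrho'\beta>0$ together with the small parameters, so that $\|v^u\|^2$ is absorbed. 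The lower-order terms involving $b,c,\varsigma$ produce $\Lambda_0\|\zeta w\|_{2q/(q-1);Q_{\rho,\tau}}^2$, which by the Gagliardo--Nirenberg / embedding Lemma~\ref{lem emmbedding for space V} and Young's inequality splits into $\delta\|\zeta w\|_{\cV_2}^2+C\|w\|_{2;Q_{\rho,\tau}}^2$; since $\varrho>1$ and $2q>n+2$ this interpolation exponent is admissible. The inhomogeneous terms $f_0,g_0$ are estimated exactly as in~(\ref{eq 1 in thm glob max})--(\ref{eq 2 in thm glob max}): H\"older on $\{u>k\}$ plus the embedding gives $\delta\|\zeta w\|_{\cV_2}^2+C(k^2+A_p(f_0,g_0)^2)|\{w>0\}|_{\infty;Q_{\rho,\tau}}^{1-2/\mu}$ with $\mu=\min\{p,2q\}$, absorbing $k$ via the affine shift $f^k,g^k$ and $c\le L_0$ as in Theorem~\ref{thm max princip global 1}. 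Collecting everything and choosing $\delta$ small, one obtains precisely the inequality $(\mathfrak D^+)$ with $a_0=A_p(f_0,g_0)$ and $\gamma$ depending on the listed structural constants.

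The one genuine subtlety — and the step I expect to be the main obstacle — is justifying the It\^o formula for the space-time-dependent test function $\zeta^2 r^2$: the hypotheses of Lemma~\ref{lem ito foumul} require $\phi'(s,x,0)=0$ (satisfied) and boundedness of $\phi''$, $|s|^{-1}|\partial_{x_i}\phi'|$, and $s^{-2}|\partial_s\phi-\partial_s\phi(\cdot,0)|$, which hold here because $\zeta,\nabla\zeta,\partial_t\zeta\in L^\infty(Q_{\rho,\tau})$ by the definition of cut-off function; moreover $\phi'(s,x,r)=2\zeta^2 r$ is not globally bounded, so one first truncates $r\mapsto r\wedge N$ (or applies Remark~\ref{rmk ito formula 0} with an approximating sequence $\phi^j$) and then lets $N\to\infty$, using $u\in\cV_{2,0}(Q)$ to pass to the limit. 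Since $\zeta(u-k)^+$ vanishes near $\partial Q_{\rho,\tau}$, its $\cV_{2,0}$-norm over $Q_{\rho,\tau}$ equals the one over $Q$, and the boundary/terminal contributions drop out (on $\{T\}\times\cO$ the cut-off support keeps us away from $T$). Once the It\^o identity is in hand, the remaining estimates are entirely parallel to those already carried out in Section~4 and in the proof of Theorem~\ref{thm max princip global 1}, so I would only indicate the changes rather than redo them in full.
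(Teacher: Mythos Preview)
Your overall strategy---apply an It\^o formula with $\phi(s,x,r)=\zeta^2(s,x)\,r^2$ to $(u-k,v)$, then absorb terms as in Theorems~\ref{thm solve (1.1)} and~\ref{thm max princip global 1}---is indeed the paper's approach, but two points in your execution do not go through as written.

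First, you invoke Proposition~\ref{propsition ito non null bdary} directly. Its standing hypothesis is $(u-k)^+\in\dot{\cV}_{2,0}(Q)$, i.e.\ the null Dirichlet condition on the positive part over the whole of $\cO$. In the local situation no boundary behaviour of $u$ is assumed, so this hypothesis fails, and the smoothing of $\zeta$ or the truncation in $r$ that you propose address only the regularity of $\phi$, not this point. The paper repairs the gap through an intermediate Lemma~\ref{lem cor of prop ito non null bdary}: one first multiplies the equation by a smooth approximation $\zeta^l\in C_c^\infty(Q_{\rho,\tau})$, so that $\zeta^l u$ \emph{does} satisfy the null Dirichlet condition and solves a modified BSPDE to which Proposition~\ref{propsition ito non null bdary} applies; one then passes to the limit $\zeta^l\to\zeta$ in $W^{2,2}_1$. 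Without this maneuver the It\^o identity you need is not justified by any result stated in the paper.

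Second, you write ``absorbing $k$ via the affine shift $f^k,g^k$ and $c\le L_0$'', but Proposition~\ref{prop verifying BSPDG} does \emph{not} assume $({\mathcal A}4)$ (the remark following it says so explicitly); only the integrability $c\in\cM^q(Q)$ from $({\mathcal A}3)$ is available. The cross term $\int\!\ll\zeta^2(u-k)^+,\,ck\gg$ must therefore be estimated by H\"older against $\|c\|_{q;Q_{\rho,\tau}}$, which yields $k^2\Lambda_0\,|\{u>k\}|_{\infty;Q_{\rho,\tau}}^{\,1-1/q}$. This is exactly where the exponent $1-\tfrac{2}{2q}$ enters and explains why $\mu=\min\{p,2q\}$ rather than $\mu=p$; your argument, relying on $c\le L_0$, gives no source for the $2q$.
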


\begin{rmk}
  It is worth noting that in this proposition, assumption $({\mathcal A}4)$ is not made.
\end{rmk}

 The proof requires the following lemma.

\begin{lem}\label{lem cor of prop ito non null bdary}
     Let assumptions $({\mathcal A}1)$--$({\mathcal A}3)$ hold, $\zeta$ be a cut-off function
     on $Q_{\rho,\tau}:=[t_0-\tau,t_0)\times B_{\rho}(x_0) \subset Q$, and $(u,v)\in\cV_{2,0}(Q)\times\cM^2(Q)$
     be a weak solution of   (\ref{1.1}). Then, we have almost surely
\begin{equation}\label{eq ito cutoff func}
  \begin{split}
    &\ll \zeta^2(t),\, |u^+(t)|^2\gg_{B_{\rho}(x_0)}+\int_t^{t_0}\ll \zeta^2(s),\,|v^u(s)|^2\gg_{B_{\rho}(x_0)} ds\\
    =&
    -\int_t^{t_0}2\ll \zeta \partial_{s}\zeta(s),\ \, |u^+(s)|^2\gg_{B_{\rho}(x_0)} ds\\
    &+\int_t^{t_0}2\ll \zeta^2(s) u^+(s),\ \,g^{u}(s)\gg_{B_{\rho}(x_0)}ds               \\
    &+\int_t^{t_0}2\ll \zeta^2(s) u^+(s),\ \,   b^i(s)\partial_{x_i}u(s)+c(s)\,u^+(s)+\varsigma^r(s) v^{r,u}(s)\gg_{B_{\rho}(x_0)} ds\\
    &-\int_{t}^{t_0}\ll 2\partial_{x_i} (\zeta^2(s)u^+(s)),\ \,
            a^{ji}(s)\partial_{x_j}  u^+(s)
            +\sigma^{ir}(s)v^{r,u}(s)
            +f^{i,u}(s)\gg_{B_{\rho}(x_0)}ds\\
    &-\int_t^{t_0}2\ll \zeta^2(s)u^+(s),\ \, v^{r,u}(s)\gg_{B_{\rho}(x_0)} dW_s^r, \quad \forall \, t\in [t_0-\tau, t_0]
  \end{split}
\end{equation}
where
 \begin{equation*}
    \begin{split}
        &g^{u}(s,x):=1_{\{(s,x):u(s,x)>0\}}(s,x)g(s,x,u(s,x),\nabla u(s,x),v(s,x));\\
        &f^{i,u}(s,x):=1_{\{(s,x):u(s,x)>0\}}(s,x)f^i(s,x,u(s,x),\nabla u(s,x),v(s,x)),\ i=0,1,\cdots,n;\\
    \end{split}
  \end{equation*}
  and
 $$ v^u:=(v^{1,u},\cdots,v^{m,u}), \quad v^{r,u}(s,x):=1_{\{(s,x):u(s,x)>0\}}(s,x)v^r(s,x),\quad  r=1,\cdots,m.$$
\end{lem}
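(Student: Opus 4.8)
The plan is to obtain \eqref{eq ito cutoff func} as a localized version of the global It\^o formula in Proposition \ref{propsition ito non null bdary}. The idea is to apply that proposition not to the test function $\phi$ there but to a product of the squared solution with a spatial cut-off, and then to handle the time dependence of the cut-off separately via the time variable that $\phi$ is allowed to carry in Lemma \ref{lem ito foumul}. More precisely, first I would observe that, writing $Q_{\rho,\tau}=[t_0-\tau,t_0)\times B_\rho(x_0)$ and extending $u,v$ to $\Omega\times[0,T]\times\bR^n$, the process $\tilde u:=\zeta u$ (with $\zeta$ the given cut-off function on $Q_{\rho,\tau}$) lies in $\dot W^{1,2}_{\sF}$ of a slightly larger cylinder, because $\zeta\in\dot W^{2,2}_1$ and vanishes near the lateral part of $\partial Q_{\rho,\tau}$; the null-boundary hypothesis needed to invoke the It\^o formulas is therefore supplied by the cut-off, not by $u$ itself (this is exactly the device used in \textbf{Step 1} of the proof of Lemma \ref{lem ito for non null bdary condition}). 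Multiplying the weak form \eqref{eq in defn of weak sol} of BSPDE \eqref{1.1} by a test function of the form $\varphi\zeta$, $\varphi\in C_c^\infty(B_\rho(x_0))$, shows that $\tilde u$ solves, in the weak sense, a BSPDE of the type \eqref{eq bspde in ito formula} whose free terms $h^0,\dots,h^n$ are the original coefficients multiplied by $\zeta$ together with commutator terms coming from $\nabla\zeta$, $\Delta\zeta$ and $\partial_t\zeta$; all of these lie in $W^2_{\sF}$ by assumption $({\mathcal A}3)$ and the boundedness of $|\nabla\zeta|$, $\partial_t\zeta$.

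Next I would take, for $\eps>0$, the time-space function $\phi_\eps(t,x,s):=\theta_\eps(s)$, a smooth convex approximation of $(s^+)^2$ with $\phi_\eps'(t,x,0)=0$ and bounded second derivative, and apply Proposition \ref{propsition ito non null bdary} (or rather its localized analogue obtained as in Lemma \ref{lem ito for non null bdary condition}) to $u$ with this $\phi$, after multiplying through by the extra weight $\zeta^2$. The cleanest route is actually to apply the already-proved Lemma \ref{lem ito for non null bdary condition} / Proposition \ref{propsition ito non null bdary} to $u$ against the test function $\phi(t,x,s)=\zeta^2(t,x)\,(s^+)^2$ — this is precisely why Lemma \ref{lem ito foumul} was stated with $\phi$ depending on $(t,x)$: the term $-\int_t^{t_0}\!\!\int \partial_s\phi(s,x,u)\,dx\,ds$ then produces exactly $-\int_t^{t_0}2\ll\zeta\partial_s\zeta,|u^+|^2\gg ds$, while $\partial_{x_i}\phi'(s,\cdot,u^+)$ produces the $\nabla(\zeta^2 u^+)$-type terms, and $\phi''$ gives $2\zeta^2 1_{\{u>0\}}$, yielding the stochastic-integral and quadratic-variation terms with the $v^{r,u}$ truncation built in. One verifies that $\phi(t,x,s)=\zeta^2(t,x)(s^+)^2$ satisfies the hypotheses of Lemma \ref{lem ito foumul} after the usual convex smoothing in $s$ and mollification in $(t,x)$ of $\zeta$, invoking Remark \ref{rmk ito formula 0} and Remark \ref{rmk after cor of solve (1.1)} to pass from the smoothed $\phi$ to the non-smooth one.

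After collecting terms, the right-hand side of Proposition \ref{propsition ito non null bdary} with this $\phi$ is, term by term, the right-hand side of \eqref{eq ito cutoff func}: the drift $g^u$ and $f^{i,u}$ here are the restricted versions (no $h$-terms, since we apply the proposition with $h\equiv 0$ and $(u,v)\in\sU\times\sV(G,f,g)$), $b^i\partial_{x_i}u$ stays un-truncated because on $\{u>0\}$ one may replace $\partial_{x_i}u$ by $\partial_{x_i}u^+$ a.e. but the statement keeps it as written, and the boundary contribution $\int_{\cO}\phi(T,x,\cdot)dx$ drops out because $\zeta$ vanishes near $t=t_0$ together with all its relevant derivatives, so $\phi(t_0,\cdot,\cdot)\equiv 0$. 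The integrals over $\cO$ collapse to integrals over $B_\rho(x_0)$ since $\zeta$ is supported there. I expect the main obstacle to be the bookkeeping in Step 1: showing rigorously that $\zeta u$ (hence $(\zeta u)^+$, and more to the point $\zeta^2(u^+)^2$) has enough regularity and the right null-boundary behaviour to legitimately feed into the It\^o formula, i.e. justifying the commutator computation $\partial_{x_j}(a^{ij}\partial_{x_i}(\zeta u))=\zeta\,\partial_{x_j}(a^{ij}\partial_{x_i}u)+(\text{lower order in }\nabla\zeta,\nabla^2\zeta)$ in the weak sense and checking each resulting free term is in $W^2_{\sF}(Q)$ — this is routine but is where the hypothesis $\zeta\in\dot W^{2,2}_1(Q_{\rho,\tau})$ and $|\nabla\zeta|,\partial_t\zeta\in L^\infty$ are genuinely used. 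Everything else is a direct citation of Proposition \ref{propsition ito non null bdary} with the specific choice of time-space test function, together with the density/approximation arguments already carried out in the proofs of Lemma \ref{lem ito foumul} and Lemma \ref{lem ito for non null bdary condition}.
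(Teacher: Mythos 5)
Your first paragraph is essentially the paper's proof. The paper approximates $\zeta$ by $\zeta^l\in C_c^{\infty}(Q_{\rho,\tau})$, tests the weak formulation against products $\varphi\zeta^l$ to show that $(\zeta^l u,\zeta^l v)$ is a weak solution of a transformed BSPDE with zero terminal and lateral data, i.e. $(\zeta^l u,\zeta^l v)\in\dot{\sU}\times\dot{\sV}(0,\tilde f_l,\tilde g_l)$ with the commutator terms in $\nabla\zeta^l$ and $\partial_s\zeta^l$ absorbed into $\tilde f_l,\tilde g_l$, then applies Proposition \ref{propsition ito non null bdary} to this \emph{localized solution} (with the plain convex function $(s^+)^2$, not a $(t,x)$-dependent $\phi$), and finally passes to the limit $l\to\infty$ using $\|\zeta^l-\zeta\|_{W^{2,2}_1(Q_{\rho,\tau})}\to0$ and the path-wise continuity of $u$. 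In that route the term $-2\ll\zeta\partial_s\zeta,\,|u^+|^2\gg$ comes from the piece $-\partial_s\zeta^l\,u$ of the transformed drift paired with $\phi'=2(\zeta^l u)^+$, rather than from $\partial_s\phi$.

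The ``cleanest route'' you then favor --- citing Proposition \ref{propsition ito non null bdary} (or Lemma \ref{lem ito for non null bdary condition}) for $u$ itself with the test function $\phi(t,x,s)=\zeta^2(t,x)(s^+)^2$ --- is not literally available: those statements require $u^+\in\dot{\cV}_{2,0}(Q)$ (respectively $u\in\dot{W}^{1,2}_{\sF}(Q)$), a null Dirichlet condition on all of $\partial\cO$, which is precisely what the present lemma does \emph{not} assume; here $u$ is an arbitrary weak solution with no boundary information. The compact support of $\phi$ in $(t,x)$ morally substitutes for that hypothesis, but turning this into a proof is exactly the localization $u\mapsto\zeta^l u$ of your first paragraph, so the shortcut collapses back onto the paper's argument. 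With that understood, your formal bookkeeping is right (e.g. $\phi''\partial_{x_i}u^++\partial_{x_i}\phi'=2\partial_{x_i}(\zeta^2u^+)$, $\phi''$ producing the factor $2\zeta^2 1_{\{u>0\}}$ in the quadratic-variation term, and the terminal term vanishing because $\zeta$ vanishes near $t=t_0$), and the remaining issues you flag (regularity of the free terms of the transformed equation, membership in $W^2_{\sF}$) are indeed the only technical content; the proposal is correct once the It\^o formula is applied to $\zeta^l u$ rather than to $u$.
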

The proof of this lemma is rather standard and is sketched below.
\begin{proof}  [{\bf Sketch of the proof}]
We use approximation.
    By the definition of a cut-off function, all terms of \eqref{eq ito cutoff func} are well defined and there is a sequence $\{\zeta^l,l\in\bN\}\subset C_c^{\infty}(Q_{\rho,\tau})$ such that $\lim_{l\rightarrow\infty}\|\zeta^l-\zeta\|_{W^{2,2}_1(Q_{\rho,\tau})}=0$.
    In view of Definition \ref{definition of weak solution} and Remark \ref{rmk defn weak solution}, we verify like in \textbf{Step 1} of the proof of Lemma \ref{lem ito for non null bdary condition} that for each $l$ there holds
    \begin{equation*}
      \begin{split}
        \zeta^l u(t,x)
        =&
            \int_t^T \Big[
            \partial_{x_j}\left(a^{ij}\partial_{x_i}(\zeta^l u)(s,x)
            +\sigma^{jr}\zeta^l v^r(s,x)+ \tilde{f}_l^j(s,x)\right)+b^i\partial_{x_j}(\zeta^l u)(s,x)
        \\
        &
            +c\,\zeta^l u(s,x)+\varsigma^{r}\zeta^lv^r(s,x)
            +\tilde{g}_l(s,x)\Big]\,ds
            -\int_t^T \zeta^lv^r(s,x)\,dW_s^r,\quad t\in [0,T]
      \end{split}
    \end{equation*}
    in the weak sense of Definition \ref{definition of weak solution}, where
    \begin{equation*}
      \begin{split}
        \tilde{g}_l(s,x):=&-\partial_{s}\zeta^lu(s,x)
        +\zeta^l(s,x) g(s,x,u(s,x),\nabla u(s,x),v(s,x))\\
        &-b^i\partial_{x_i}\zeta^lu(s,x)
        -\partial_{x_j} \zeta^l \bar{f}_l^j(s,x),\\
        \bar{f}_l(s,x):=\, &a^{i\cdot} \partial_{x_i}  u(s,x)
        +\sigma^{\cdot r} v^r(s,x)+ f(s,x,u(s,x),\nabla u(s,x),v(s,x)), \\
        \tilde{f}_l(s,x):=\, &-a^{i\cdot} \partial_{x_i}\zeta^l u(s,x)+\zeta^l(s,x)f(s,x,u(s,x),\nabla u(s,x),v(s,x)).
      \end{split}
    \end{equation*}
    Thus, $(\zeta^lu,\zeta^lv)\in \dot{\sU}\times\dot{\sV}(0,\tilde{f}_l,\tilde{g}_l)$.
    From  Proposition \ref{propsition ito non null bdary}  we conclude  that
      \eqref{eq ito cutoff func} holds with $\zeta$ being replaced by $\zeta^l$. Passing to the limit in $L^{1}(\Omega\times Q)$ and taking into account  the path-wise continuity of $u$, we prove our assertion.
\end{proof}

 \begin{proof}[{\bf Proof of Proposition \ref{prop verifying BSPDG}}]
   Consider the cylinder
   $$Q_{\rho,\tau}(X)=X+[-\tau,0)\times B_\rho(0)\subset Q \hbox{ \rm with } X:=(t_0,x_0).$$
    For simplicity, we denote $Q_{\rho,\tau}(X)$ and $B_{\rho}(x_0)$ by $Q_{\rho,\tau}$ and $B_{\rho}$ respectively. Let $\zeta$ be a cut-off function on $Q_{\rho,\tau}$. Denote $\bar{u}:=(u-k)^+$. From Lemma \ref{lem cor of prop ito non null bdary}, it follows that
   \begin{equation}\label{eq 1 in prop verifying}
     \begin{split}
       &E\bigg[ \|\zeta(t)\bar{u}(t)\|_{L^2(B_{\rho})}^2
       +\int_t^{t_0}\|\zeta(s) v_k(s)\|_{L^2(B_{\rho})}^2\,ds   \big|\sF_t\bigg]\\
       =&
       E\bigg[ \int_t^{t_0}2\ll \zeta^2(s)\bar{u}(s),\,  g^k  (s,\cdot,\bar{u}(s),\nabla\bar{u}(s),v_k(s))
       \gg_{B_{\rho}}  ds
            \big|\sF_t\bigg]\\
       &+E\left[ \int_t^{t_0} 2\ll
             \zeta^2(s)\bar{u}(s),\,
            b^i(s)\partial_{x_i}u(s)+c(s)\,\bar{u}(s)+\varsigma^r(s) v^{r}_k(s)\gg_{B_{\rho}}
       \big|\sF_t\right]\\
       &-E\bigg[ \int_t^{t_0}2\ll \zeta(s)\partial_{s}\zeta(s),\, |\bar{u}(s)|^2\gg_{B_{\rho}} ds    \big|\sF_t\bigg]\\
       &-E\bigg[ \int_t^{t_0}\ll 2\partial_{x_j} (\zeta^2(s)\bar{u}(s)),\,
       a^{ij}(s)\partial_{x_i} \bar{u}(s)+\sigma^{jr}(s)v^r_k(s)\\
       &~~~~~~~~~~~~~~~~~~~
       +(f^k)^j(s,\cdot,\bar{u}(s),\nabla \bar{u}(s),v_k(s))   \gg_{B_{\rho}} ds
            \big|\sF_t\bigg]
     \end{split}
   \end{equation}
   holds almost surely  for all $t\in[t_0-\tau,t_0)$ where $v_k:=v1_{u>k}$ and
   for  $(\omega,t,x,R,Y,Z)\in \Omega\times [t_0-\tau,t_0)\times\cO\times\bR\times\bR^n\times\bR^m $
  $$ ( f^k  , g^k  )(\omega,t,x,R,Y,Z):=(f,g)(\omega,t,x,R+k,Y,Z)+(0,c(\omega,t,x)k).
    $$

   In view of \eqref{eq est g}-\eqref{eq est principle part} and \eqref{eq 1 in thm glob max}-\eqref{eq 4 in thm glob max}, we have almost surely for all $t\in [t_0-\tau,t_0)$
   \begin{equation*}
     \begin{split}
       &E\bigg[ \int_t^{t_0}2\ll \zeta^2\bar{u}(s),\,  g^k  (s,\cdot,\bar{u}(s),\nabla\bar{u}(s),v_k(s))
       \gg_{B_{\rho}} ds
            \big|\sF_t\bigg]\\
       \leq &\
       E\bigg[ \int_t^{t_0}2\ll \zeta^2\bar{u}(s),\,  g_0^k  (s)+L(|\bar{u}(s)|
        +|\nabla \bar{u}(s)|
        +|v_k(s)|)\gg_{B_{\rho}} ds  \big|\sF_t\bigg]\\
       \leq &
       \ \eps_1\|\zeta\bar{u}\|^2_{\cV_2(Q_{\rho,\tau})}
       +\eps_2 E\bigg[ \int_t^{t_0} \|\zeta(s)v_k(s)\|^2_{L^2(B_{\rho})}\,ds   \big| \sF_t \bigg]
       \\
      &+C(L)\|\nabla \zeta\|_{L^{\infty}(Q_{\rho,\tau})}^2 \|\bar{u}\|^2_{2;Q_{\rho,\tau}}
      +C(\eps_1,L,n,p)(\left|A_p(f_0,g_0)\right|^2+k^2)|\{u>k \}|^{1-\frac{2}{p}}_{\infty;Q_{\rho,\tau}}
        \\
       &+
       C(\eps_1,\eps_2,L)
       \|\zeta\bar{u}\|^2_{2;Q_{\rho,\tau}}
       +E\bigg[ \int_t^{t_0} \!2\!\ll \zeta^2\bar{u}(s),\, |c(s)k| \gg_{B_{\rho}} ds   \big| \sF_t \bigg],
    \end{split}
   \end{equation*}

   \begin{equation*}
     \begin{split}
       &E\bigg[ \int_t^{t_0} 2\ll \zeta^2\bar{u}(s),\,\, |c(s)k| \gg_{B_{\rho}} ds   \big| \sF_t \bigg]\\
       \leq &E\bigg[ \int_t^{t_0}\ll \zeta^2\left|\bar{u}(s)\right|^2,\ |c(s)|\gg_{B_{\rho}} ds \big| \sF_t\bigg]
       +k^2 E\bigg[ \int_t^{t_0}|\! \ll \zeta^2(s),\ |c(s) 1_{u>k}|\gg_{B_{\rho}}  ds \big| \sF_t\bigg]\\
       \leq&\ E\bigg[ \int_t^{t_0}\! \ll \zeta^2\left|\bar{u}(s)\right|^2,\  |c(s)|\gg_{B_{\rho}} ds \big| \sF_t\bigg]
       +k^2\Lambda_0 |\{u>k\}|_{\infty;Q_{\rho,\tau}}^{1-\frac{1}{q}},
     \end{split}
   \end{equation*}

   \begin{equation*}
    \begin{split}
   &E\left[ \int_t^{t_0} 2\ll
             \zeta^2(s)\bar{u}(s),\,
            b^i\partial_{x_i}u(s)+c\,\bar{u}(s)+\varsigma^r v^{r}_k(s)\gg_{B_{\rho}}
       \big|\sF_t\right]\\
   \leq &
    \,\eps_1 \|\zeta\bar{u}\|^2_{\cV_2(Q_{\rho,\tau})}
    +\eps_2  \|\zeta v_k\|^2_{2;Q_{\rho,\tau}}
    +\eps_1\|\nabla \zeta\|_{L^{\infty}(Q_{\rho,\tau})}^2 \|\bar{u}\|^2_{2;Q_{\rho,\tau}}\\
    &\ +C(\eps_1,\eps_2,n)
    E\bigg[ \int_t^{t_0} \ll \zeta^2\bar{u}^2(s),\ |b(s)|^2+|c(s)|+|\varsigma(s)|^2 \gg_{B_{\rho}}    ds   \big| \sF_t \bigg]\\
    \leq &
    \,\eps_1 \|\zeta\bar{u}\|^2_{\cV_2(Q_{\rho,\tau})}
    +\eps_2  \|\zeta v_k\|^2_{2;Q_{\rho,\tau}}
    +\eps_1\|\nabla \zeta\|_{L^{\infty}(Q_{\rho,\tau})}^2 \|\bar{u}\|^2_{2;Q_{\rho,\tau}}
    +C(\eps_1,\eps_2,n)\Lambda_0 \|\zeta\bar{u}\|_{\frac{2q}{q-1};Q_{\rho,\tau}}^2\\
    \leq &
    \,2\eps_1 \|\zeta\bar{u}\|^2_{\cV_2(Q_{\rho,\tau})}
    +\eps_2  \|\zeta v_k\|^2_{2;Q_{\rho,\tau}}
    +\eps_1\|\nabla \zeta\|_{L^{\infty}(Q_{\rho,\tau})}^2 \|\bar{u}\|^2_{2;Q_{\rho,\tau}}\\
    &+C(\eps_1,\eps_2,n,q,\Lambda_0) \|\zeta\bar{u}\|_{2;Q_{\rho,\tau}}^2\\
    \end{split}
   \end{equation*}
   and
   \begin{equation*}
     \begin{split}
       &-2E\bigg[ \int_t^{t_0}\!\!\!\ll \partial_{x_j} (\zeta^2\bar{u}(s))
       ,\, a^{ij}\partial_{x_i} \bar{u}(s)+\sigma^{jr}v^r_k(s)+(f^k)^j(s,\bar{u}(s),\nabla \bar{u}(s),v_k(s))   \gg_{B_{\rho}}\! ds
            \big|\sF_t\bigg]\\
       &
        =-2E\bigg[ \int_t^{t_0}\!\!\!\!\ll \zeta^2\partial_{x_j} \bar{u}(s)
        ,\, a^{ij}\partial_{x_i} \bar{u}(s)+\sigma^{jr}v^r_k(s)+(f^k)^j(s,\bar{u}(s),\nabla \bar{u}(s),v_k(s))   \gg_{B_{\rho}}\! ds
            \big|\sF_t\bigg]\\
       &
        \ -4 E\bigg[ \int_t^{t_0}\!\!\!\!\ll \bar{u}\zeta \partial_{x_j}\zeta(s)
        ,\, a^{ij}\partial_{x_i} \bar{u}(s)+\sigma^{jr}v^r_k(s)
        +(f^k)^j(s,\bar{u}(s),\nabla \bar{u}(s),v_k(s))   \gg_{B_{\rho}}\! ds
            \big|\sF_t\bigg]\\
      &
      \leq   -\lambda_0 E\bigg[ \int_t^{t_0}\|\zeta\nabla \bar{u}(s)\|^2_{L^2(B_{\rho})}\,ds
                            \big|\sF_t\bigg]
        +\alpha_0 E\bigg[ \int_t^{t_0}\|\zeta v_k(s)\|^2_{L^2(B_{\rho})}\,ds
                            \big|\sF_t\bigg]
      \\
      &
        +C\left(\|\zeta \bar{u}\|^2_{2;Q_{\rho,\tau}}+
        \left(\left|A_p(f_0,g_0)\right|^2+k^2\right)|\{u>k\}|_{\infty;Q_{\rho,\tau}}^{1-\frac{2}{p}}
        \right)
      \\
      &
        +CE\left[
            \int_t^{t_0}\ll |\bar{u}\nabla \zeta(s)|,\, |f^k_0(s)|+|\bar{u}(s)|
            + |\zeta\nabla\bar{u}(s)|
            +|\zeta v_k(s)| \gg ds\big|\sF_t
        \right]
      \\
      &
        \leq    -(\lambda_0-\eps) E\bigg[ \int_t^{t_0}\|\nabla (\zeta\bar{u}(s))\|^2_{L^2(B_{\rho})}\,ds
                            \big|\sF_t\bigg]
        +(\alpha_0+\eps) E\bigg[ \int_t^{t_0}\|\zeta v_k(s)\|^2_{L^2(B_{\rho})}\,ds
                            \big|\sF_t\bigg]
                                                                            \\
       &
       +C\left\{\|\nabla\zeta\|^2_{L^{\infty}(Q_{\rho,\tau})}  \|\bar{u}\|^2_{2;Q_{\rho,\tau}}+\|\bar{u}\zeta\|^2_{Q_{\rho,\tau}}+ \left(\left|A_p(f_0,g_0)\right|^2+k^2\right)|\{u>k\}|_{\infty;Q_{\rho,\tau}}^{1-\frac{2}{p}}
       \right\}
     \end{split}
   \end{equation*}
    with $C:=C(\eps,p,\lambda,\beta,\varrho,\kappa,\Lambda,L)$,
     where $\alpha_0\in (0,1)$ and $\lambda_0$ are two positive constants depending only on structure terms such as $\kappa,p,\lambda,\varrho,\beta,\Lambda$ and $L$, and the three parameters $\eps,\eps_1,\eps_2$ are waiting to  be determined later.
   On the other hand, it is obvious that almost surely
   $$
   -E\left[ \int_t^{t_0}2\ll
        \zeta\partial_s\zeta(s),|\bar{u}(s)|^2
        \gg_{B_{\rho}} ds  \Big|\sF_t\right]
   \leq 2 \|\partial_s \zeta\|_{L^{\infty}(Q_{\rho,\tau})}\|\bar{u}\|^2_{2;Q_{\rho,\tau}},\ \,\forall t\in[t_0-\tau,t_0).
      $$

    Therefore, combining the above estimates and \eqref{eq 1 in prop verifying} and choosing the parameters $\eps,\eps_1$ and $\eps_2$ to be small enough, we obtain
   \begin{equation*}
    \begin{array}{l}
        \begin{split}
           &\|\zeta(u-k)^{+}\|^2_{\infty,2;Q_{\rho,\tau}}+ \|\nabla (\zeta(u-k)^{+})\|^2_{2;Q_{\rho,\tau}}\\
       \leq &
       \gamma\Big\{
       (1+\|\nabla\zeta\|_{L^{\infty}(Q_{\rho,\tau})}^2
            +\|\partial_{t}\zeta\|_{L^{\infty}(Q_{\rho,\tau})}  )\|(u-k)^{+}\|^2_{2;Q_{\rho,\tau}}\\
        &\quad +\left(k^2+\left|A_p(f_0,g_0)\right|^2\right)|\{ (u-k)^{+}>0\}|_{\infty;Q_{\rho,\tau}}^{1-\frac{2}{p\wedge (2q)}}   \Big\}
        \end{split}
    \end{array}
    \end{equation*}
    where $\gamma$ is a positive constant depending on the structure terms
    such as $n,p,q,\kappa,\lambda,\varrho,\beta, L,\Lambda$ and $\Lambda_0$. Hence $u\in BSPDG^+(a_0, \mu, \gamma;Q)$.

    In a similar way, we show $u\in BSPDG^-(a_0, \mu, \gamma;Q)$. The proof is complete.
 \end{proof}

\begin{thm}\label{thm max princip local}
  If $u\in BSPDG^{\pm}(a_0, \mu, \gamma;Q)$, we assert that for any
  $$ Q_{\rho}=[t_0,t_0+\rho^2)\times B_{\rho}(x_0) \subset Q,\,\rho\in (0,1), $$
  there holds
  \begin{equation}\label{estim in thm max princip local}
    \esssup_{\Omega\times Q_{\frac{\rho}{2}}} u^{\pm}
    \,\leq
        C\left\{  {\rho^{-\frac{n+2}{2}}} \|u^{\pm}\|_{2;Q_{\rho}}+a_0 \rho^{1-\frac{n+2}{\mu}}   \right\},
  \end{equation}
  where $C$ is a constant depending only on $a_0, \mu, \gamma$ and $n$.
\end{thm}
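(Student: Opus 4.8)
The plan is to run a De Giorgi iteration on a shrinking family of coaxial sub-cylinders and close it with Lemma~\ref{lem degiorgi iteration}. By symmetry it suffices to prove the bound for $u^+$: since $u\in BSPDG^-(a_0,\mu,\gamma;Q)$ is the same as $-u\in BSPDG^+(a_0,\mu,\gamma;Q)$ (replace $k$ by $-k$ in $(\mathfrak{D}^-)$), the estimate for $u^-$ follows by applying the $u^+$ case to $-u$. After a translation take $x_0=0$. Fix $\bar k\ge0$ to be chosen, put $k_l:=\bar k(1-2^{-l})\nearrow\bar k$ (so $k_l-k_{l-1}=\bar k\,2^{-l}$), $\rho_l:=\tfrac{\rho}{2}(1+2^{-l})\searrow\tfrac{\rho}{2}$, and $Q^{(l)}:=[t_0,t_0+\rho_l^2)\times B_{\rho_l}(0)$, so that $Q^{(0)}=Q_\rho\supset Q^{(1)}\supset\cdots$ and $\bigcap_lQ^{(l)}=Q_{\rho/2}$. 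For each $l\ge1$ fix a cut-off function $\zeta_l$ on $Q^{(l-1)}$ with $\zeta_l\equiv1$ on $Q^{(l)}$, $0\le\zeta_l\le1$, $\|\nabla\zeta_l\|_{L^\infty}\le C2^l/\rho$ and $\|\partial_t\zeta_l\|_{L^\infty}\le C4^l/\rho^2$. Since $u\in\cV_{2,0}(Q)$ we have $(u-k_l)^+\in\cV_{2,0}(Q)$, hence $\zeta_l(u-k_l)^+\in\dot{\cV}_{2,0}(Q^{(l-1)})$, so Lemma~\ref{lem emmbedding for space V} applies to it.

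Write $\bar u_l:=(u-k_l)^+$ and $y_l:=\|\bar u_l\|^2_{2;Q^{(l)}}$. Apply the defining inequality $(\mathfrak{D}^+)$ of $BSPDG^+$ with level $k_l$ on the cylinder $Q^{(l-1)}$ and cut-off $\zeta_l$; absorbing the additive $1$ by $\rho<1$, using $\bar u_l\le\bar u_{l-1}$, and estimating the level-set measure by Chebyshev's inequality $|\{u>k_l\}|_{\infty;Q^{(l-1)}}\le(k_l-k_{l-1})^{-2}y_{l-1}=4^l\bar k^{-2}y_{l-1}$ (all applied inside the conditional expectations $E[\,\cdot\,|\sF_s]$ before taking $\esssup_{\omega}\sup_s$), one gets
\begin{equation*}
\|\zeta_l\bar u_l\|^2_{\cV_2(Q^{(l-1)})}\le C\gamma\,\tfrac{4^l}{\rho^2}\Big(y_{l-1}+(\bar k^2+a_0^2)\big(\bar k^{-2}y_{l-1}\big)^{1-2/\mu}\Big).
\end{equation*}
Because $\zeta_l\equiv1$ on $Q^{(l)}$, the Sobolev embedding Lemma~\ref{lem emmbedding for space V} bounds $\|\bar u_l\|^2_{\frac{2(n+2)}{n};Q^{(l)}}$ by $C(n)$ times the left-hand side; combining this with the conditional H\"older inequality $\|\bar u_l\|^2_{2;Q^{(l)}}\le\|\bar u_l\|^2_{\frac{2(n+2)}{n};Q^{(l)}}\,|\{u>k_l\}|_{\infty;Q^{(l)}}^{2/(n+2)}$ (conjugate exponents $\tfrac{n+2}{n},\tfrac{n+2}{2}$) and Chebyshev once more on $Q^{(l)}\subset Q^{(l-1)}$, we reach the closed recursion
\begin{equation*}
y_l\le C_0\,\beta^{\,l}\Big(\alpha_1\,y_{l-1}^{\,1+\delta_1}+\alpha_2\,y_{l-1}^{\,1+\delta_2}\Big),\qquad \beta>1,\ \ \delta_1=\tfrac{2}{n+2},\ \ \delta_2=\tfrac{2}{n+2}-\tfrac{2}{\mu}>0,
\end{equation*}
where the positivity of $\delta_2$ is precisely the structural assumption $\mu>n+2$, and $\alpha_1,\alpha_2$ are explicit powers of $\rho^{-1}$, $\bar k^{-1}$ and $\bar k^2+a_0^2$ that tend to $0$ as $\bar k\to\infty$.

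This recursion is of the type covered by Lemma~\ref{lem degiorgi iteration}. Choosing $\bar k$ large enough makes $\max(\alpha_1,\alpha_2)$ small, hence the threshold $\theta_0$ of that lemma large (it grows like a positive power of $\max(\alpha_1,\alpha_2)^{-1}$), while $y_0=\|(u-\bar k)^+\|^2_{2;Q_\rho}\le\|u^+\|^2_{2;Q_\rho}$ is fixed; so the smallness hypothesis $y_0\le\theta_0$ eventually holds. Tracking the powers of $\bar k$ and $\rho$ one verifies that
\begin{equation*}
\bar k:=C_\ast\Big(\rho^{-\frac{n+2}{2}}\|u^+\|_{2;Q_\rho}+a_0\,\rho^{\,1-\frac{n+2}{\mu}}\Big),\qquad C_\ast=C_\ast(n,\mu,\gamma,a_0),
\end{equation*}
does the job. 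Then $y_l\to0$, and since $(u-\bar k)^+\le(u-k_l)^+$ and $Q_{\rho/2}\subset Q^{(l)}$, this forces $\|(u-\bar k)^+\|_{2;Q_{\rho/2}}=0$, i.e.\ $u\le\bar k$ $dP\otimes dt\otimes dx$-a.e.\ on $\Omega\times Q_{\rho/2}$; together with the symmetric argument for $u^-$ this is exactly \eqref{estim in thm max princip local}.

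I expect the work to be bookkeeping rather than conceptual. No probabilistic difficulty is left at this stage: all of it is packaged in $(\mathfrak{D}^+)$ (already established in Proposition~\ref{prop verifying BSPDG}), and here one only has to apply H\"older's and Chebyshev's inequalities to the conditional expectations $E[\,\cdot\,|\sF_s]$ before taking $\esssup_{\omega}\sup_s$, exactly as in the proof of Theorem~\ref{thm max princip global 1}. The delicate point is to match the exponents of $\bar k$ and $\rho$ coming out of the chain $(\mathfrak{D}^+)\to$ Sobolev $\to$ H\"older $\to$ Chebyshev so that the smallness hypothesis of Lemma~\ref{lem degiorgi iteration} becomes precisely the announced threshold $\rho^{-(n+2)/2}\|u^+\|_{2;Q_\rho}+a_0\rho^{1-(n+2)/\mu}$. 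A convenient way to suppress the $\rho$-bookkeeping altogether is to first rescale parabolically to the unit cylinder $\rho=1$ — which replaces $a_0$ by $\rho^{\,1-(n+2)/\mu}a_0\le a_0$ and leaves $\mu,\gamma$ unchanged — and run the iteration there, then undo the scaling.
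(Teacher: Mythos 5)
Your proposal is correct and follows essentially the same route as the paper's proof: the same shrinking coaxial cylinders and increasing levels, the chain $(\mathfrak{D}^{+})\to$ Lemma~\ref{lem emmbedding for space V} $\to$ H\"older $\to$ Chebyshev applied inside the conditional expectations, closure via Lemma~\ref{lem degiorgi iteration}, and the same final choice $k\sim \rho^{-(n+2)/2}\|u^{+}\|_{2;Q_{\rho}}+a_0\rho^{1-(n+2)/\mu}$ (the paper collapses the two-power recursion to a single power by noting $k^{-2}\rho^{-n-2}\phi_l\le 1$, which is equivalent to your bookkeeping). The reduction of the $u^{-}$ case to the $u^{+}$ case and the optional rescaling to $\rho=1$ are cosmetic variations only.
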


\begin{proof}[{\bf Proof}]
Consider $u\in BSPDG^+(a_0, \mu, \gamma;Q)$.
  Take
  $$R_l=\frac{\rho}{2}+\frac{\rho}{2^{l+1}},\,k_l=k(2-\frac{1}{2^l}),\,l=0,1,2,\cdots  $$
  where $k$ is a parameter waiting to be determined later.
  Denote $Q^l:=Q_{R_l}=[t_0,t_0+R_l^2)\times B_{R_l}(x_0) $. Choose $\zeta_l$ to be a cut-off function on $Q^l$ such that
  \begin{equation*}
    \zeta_l(t,x)=
  \left\{\begin{array}{l}
    \begin{split}
        &1,\quad (t,x)\in Q^{l+1};\\
        &0,\quad (t,x)\in Q^l \setminus Q_{\frac{R_l+R_{l+1}}{2}}
    \end{split}
  \end{array}\right.
  \end{equation*}
  and
  $$\left\|\nabla \zeta_l  \right\|^2_{L^{\infty}(Q_{\rho})}+\left\| \partial_{t}\zeta_l  \right\|_{L^{\infty}(Q_{\rho})}
  \leq \frac{C(n)}{(R_l-R_{l+1})^2}.$$

  From $(\mathfrak{D}^+)$, it follows that
  \begin{equation*}
    \begin{split}
      &\|\zeta_l (u-k_{l+1})^+\|^2_{\cV_2(Q^l)}\\
      \leq \,\,&
        C2^{2l}{\rho^{-2}}\|(u-k_{l+1})^+\|^2_{2;Q^l}
        +C(k^2+a_0^2)|\{u>k_{l+1}\}|^{1-\frac{2}{\mu}}_{\infty;Q^l}.
    \end{split}
  \end{equation*}
  For $k\geq a_0\rho^{1-\frac{n+2}{\mu}}$, we obtain from Lemma~\ref{lem emmbedding for space V} that
  \begin{equation*}
    \begin{split}
      &\|\zeta_l (u-k_{l+1})^+\|^2_{\frac{2(n+2)}{n};Q^l}\\
      \leq\  &
        C\|\zeta_l (u-k_{l+1})^+\|^2_{\cV_2(Q^l)}\\
      \leq \ &
      C2^{2l}{\rho^{-2}}\|(u-k_{l+1})^+\|^2_{2;Q^l}
      + Ck^2 {\rho^{-2(1-\frac{n+2}{\mu})}}|\{u>k_{l+1}\}|^{1-\frac{2}{\mu}}_{\infty;Q^l}.
    \end{split}
  \end{equation*}
  Setting
  $$ \phi_l:=\|(u-k_l)^+\|^2_{2;Q^l},  $$
we have
  \begin{equation*}
    \begin{split}
      \phi_{l+1}
      \leq\ & \|\zeta_l(u-k_{l+1})^+\|_{2;Q^l}^2\\
      \leq\  & |\{u>k_{l+1}\}|^{\frac{2}{n+2}}_{\infty;Q^l}\|\zeta_l(u-k_{l+1})^+\|^2_{\frac{2(n+2)}{n};Q^l}
      ~\textrm{  (H$\ddot{\textrm{o}}$lder inequality)}\\
      \leq\  &
      C2^{2l}{\rho^{-2}}\phi_l |\{u>k_{l+1}\}|^{\frac{2}{n+2}}_{\infty;Q^l}
      + Ck^2 {\rho^{-2(1-\frac{n+2}{\mu})}}|\{u>k_{l+1}\}|^{1-\frac{2}{\mu}+\frac{2}{n+2}}_{\infty;Q^l}.
    \end{split}
  \end{equation*}
  Note that
  \begin{equation*}
    \begin{split}
      \phi_l
      =\|(u-k_l)^+\|^2_{2;Q^l}
      \geq\ &
      (k_{l+1}-k_l)^2 |\{u>k_{l+1}\}|_{\infty;Q^l}
      = k^2{2^{-(2l+2)}} |\{u>k_{l+1}\}|_{\infty;Q^l}.
    \end{split}
  \end{equation*}
  Hence,
  \begin{equation*}\label{eq a in thm max princip local}
    \begin{split}
      \phi_{l+1}
      \leq\ &
       C2^{2l(1+\frac{2}{n+2})} \left[
      {\rho^{-2} k^{-\frac{4}{n+2}}}{\phi_l^{1+\frac{2}{n+2}}}
      +
      {\rho^{-2(1-\frac{n+2}{\mu})}k^{\frac{4}{\mu}-\frac{4}{n+2}}}
      {\phi_l^{1-\frac{2}{\mu}+\frac{2}{n+2}}}
      \right]\\
      =\ &
        C2^{2l(1+\frac{2}{n+2})}
        {\rho^{-2(1-\frac{n+2}{\mu})}k^{\frac{4}{\mu}-\frac{4}{n+2}}}
        {  \phi_l^{1-\frac{2}{\mu}+\frac{2}{n+2}}}
      \left[\left({k^{-2} \rho^{-n-2}}{\phi_l}\right)^{\frac{2}{\mu}}  +1
      \right] .
    \end{split}
  \end{equation*}

  For $k\geq a_0\rho^{1-\frac{n+2}{\mu}}+ {\rho^{-\frac{n+2}{2}}}\|u^+\|_{2;Q_{\rho}}$,
  we have ${k^{-2}\rho^{-n-2}}{\phi_l} \leq 1$
  and therefore
  \begin{equation*}\label{eq b in thm max princip local}
    \begin{split}
      \phi_{l+1} \leq
      C 2^{2l(1+\frac{2}{n+2})}
      {\rho^{-2(1-\frac{n+2}{\mu})}k^{\frac{4}{\mu}-\frac{4}{n+2}}}
      { \phi_l^{1-\frac{2}{\mu}+\frac{2}{n+2}}}
      .
    \end{split}
  \end{equation*}
  Setting
  $$
  \alpha_l:=\rho^{-n-2}k^{-2}{\phi_l},
  $$
  we have
  $$
  \alpha_{l+1}\leq C_1 2^{2l(1+\frac{2}{n+2})} \alpha_l^{1-\frac{2}{\mu}+\frac{2}{n+2}}.
  $$
  From Lemma \ref{lem degiorgi iteration}, we see that the following
  \begin{equation*}
    \begin{split}
      \alpha_0 = &{k^{-2}\rho^{-n-2}}{\|(u-k)^+\|^2_{2;Q_{\rho}}}
      \ \leq\
      {k^{-2}\rho^{-n-2}}{\|u^+\|^2_{2;Q_{\rho}}}
      \ \leq\
      \theta_0: = C_1^{-\frac{1}{\alpha}}2^{-\frac{2}{\alpha^2}(1+\frac{2}{n+2})}
    \end{split}
  \end{equation*}
  with $\alpha:=\frac{2}{n+2}-\frac{2}{\mu}$, implies
  $$\lim_{l\rightarrow\infty}\alpha_l=0\textrm{ and thus }\lim_{l\rightarrow\infty}\phi_l=0.$$

  In conclusion, the two inequalities
  $$k^2\geq {\theta_0}^{-1} \rho^{-n-2}  {\|u^+\|_{2;Q_{\rho}}^2}
 \textrm{ and }
 k\geq a_0\rho^{1-\frac{n+2}{\mu}}+ \rho^{-\frac{n+2}{2}}\|u^+\|_{2;Q_{\rho}}$$
  imply the following one:
  \begin{equation}\label{eq c in thm max princip local}
    \begin{split}
      \|u\|_{\infty;Q_{\frac{\rho}{2}}}\leq 2k.
    \end{split}
  \end{equation}
  Hence, \eqref{eq c in thm max princip local} holds for the following choice
  $$k:=a_0 \rho^{1-\frac{n+2}{\mu}}+\left(1+{\theta_0^{-\frac{1}{2}}}\right) {\rho^{-\frac{n+2}{2}}}{\|u^+\|_{2;Q_{\rho}}}.$$
  which implies our desired estimate.

  For $u\in BSPDG^-(a_0, \mu, \gamma;Q)$, the desired assertion follows in a similar way. We complete our proof.
\end{proof}

\bibliographystyle{siam}

\end{document}